\newtheorem{thm}{Theorem}[section]
\newtheorem{prop}[thm]{Proposition}
\newtheorem{cor}[thm]{Corollary} 
\theoremstyle{definition}
\newtheorem{defn}[thm]{Definition} 
\theoremstyle{remark}
\newtheorem{rmk}[thm]{Remark}
\numberwithin{equation}{section}
\newcommand{\fa}{\mathfrak{a}}
\renewcommand{\AA}{\mathbb{A}}
\newcommand{\CC}{\mathbb{C}}
\newcommand{\FF}{\mathbb{F}}
\newcommand{\RR}{\mathbb{R}} 
\newcommand{\ZZ}{\mathbb{Z}}
\newcommand{\A}{\mathbb{A}}
\newcommand{\cA}{\mathcal{A}}
\newcommand{\cD}{\mathcal{D}}
\newcommand{\cE}{\mathcal{E}}
\newcommand{\cH}{\mathcal{H}}
\newcommand{\cP}{\mathcal{P}}
\newcommand{\cS}{\mathcal{S}}
\newcommand{\cusp}{\mathrm{cusp}}
\newcommand{\half}{\frac {1} {2}}
\newcommand{\til}{\widetilde}
\DeclareMathOperator{\BC}{\mathrm{BC}}
\DeclareMathOperator{\Rat}{\mathrm{Rat}}
\DeclareMathOperator{\Stab}{\mathrm{Stab}}
\DeclareMathOperator{\GL}{\mathrm{GL}}
\DeclareMathOperator{\SL}{\mathrm{SL}}
\DeclareMathOperator{\Sym}{\mathrm{Sym}}
\DeclareMathOperator{\Sp}{\mathrm{Sp}}
\DeclareMathOperator{\Mp}{\mathrm{Mp}}
\DeclareMathOperator{\Ind}{\mathrm{Ind}}
\DeclareMathOperator{\Hom}{\mathrm{Hom}}
\DeclareMathOperator{\Res}{\mathrm{Res}} 
\DeclareMathOperator{\res}{\mathrm{res}} 
\newcommand{\isom}{\cong}
\DeclareMathOperator{\disc}{\mathrm{disc}}
\DeclareMathOperator{\vol}{\mathrm{vol}}
\DeclareMathOperator{\lmod}{\backslash}
\DeclareMathOperator{\LO}{\mathrm{LO}}
\DeclareMathOperator{\FO}{\mathrm{FO}} 
\let\Re\undefined
\DeclareMathOperator{\Re}{\mathrm{Re}}
\newcommand{\form}[2]{\langle{#1},{#2}\rangle}
\newcommand  {\openclose} [2] {(#1,#2]} 
\begin{document} \title[Periods and $(\chi,b)$-factors of Cuspidal
Forms of $\Mp(2n)$] {Periods and $(\chi,b)$-factors of Cuspidal
Automorphic Forms of Metaplectic Groups}

\author{Chenyan Wu} \address{School of Mathematics and Statistics, 
The University of Melbourne,
Victoria, 3010, Australia}
\email{chenyan.wu@unimelb.edu.au}

\keywords{Arthur Parameters; Poles of L-functions; Periods of Automorphic Forms; Theta Correspondence; Arthur Truncation of Eisenstein Series and Residues}

\thanks{The research is supported in part by General Program of National Natural Science Foundation of China (11771086),
  National Natural Science Foundation of China (\#11601087) and by Program of
  Shanghai Academic/Technology Research Leader (\#16XD1400400).}

\date{\today}
\begin{abstract}
We give constraints on the existence of $(\chi,b)$-factors in the global $A$-parameter of a genuine cuspidal automorphic representation $\sigma$ of a metaplectic group in terms of  the invariant, lowest occurrence index, of theta lifts to odd orthogonal groups. We also give a refined result that relates the invariant, first occurrence index, to non-vanishing of period integrals of residues of Eisenstein series associated to the cuspidal datum $\chi\otimes\sigma$. This complements our previous results for symplectic groups.
\end{abstract}
\maketitle{}

\section*{Introduction}
\label{sec:introduction}
Let $G$ be a classical group over a number field $F$. Let $\AA$ be the ring of adeles of $F$. Let $\sigma$ be an
automorphic representation of $G (\AA)$ in the discrete spectrum, though later we consider only cuspidal automorphic
representations. By the theory of endoscopic classification developed by
\cite{MR3135650} and extended by \cite {MR3338302,kaletha:_endos_class_repres}, one attaches a global $A$-parameter to $\sigma$. These works
depend on the stabilisation of the twisted trace formula which has been established by a series of works by M\oe glin
and Waldspurger. We refer the readers to their books \cite{MR3823813,MR3823814}. Via the Shimura--Waldspurger
correspondence of the metaplectic group $\Mp_{2n} (\AA)$, which is the non-trivial double cover of the symplectic group
$\Sp_{2n} (\AA)$, and odd special orthogonal groups, Gan and Ichino \cite{MR3866889} also attached global $A$-parameters to
genuine automorphic representations of $\Mp_{2n} (\AA)$ in the discrete spectrum. This depends on the choice of an
additive character $\psi$ that is used in the Shimura--Waldspurger correspondence. In this introduction, we also let $G$ denote $\Mp_{2n}$ by
abuse of notation and we suppress the dependence on $\psi$. The $A$-parameter $\phi (\sigma)$ of
$\sigma$ is a formal sum
\begin{equation*}
  \phi (\sigma) = \boxplus_i (\tau_i, b_i)
\end{equation*}
where $\tau_i$ is an irreducible self-dual cuspidal automorphic representation of some $\GL_{n_i} (\AA)$,
$b_i$ is a positive integer which represents the unique $b_i$-dimensional irreducible representation of Arthur's
$\SL_2 (\CC)$, together with some conditions on $(\tau_i,b_i)$ so that the type is compatible with the type of the dual 
group of $G$.  When $G$ is a unitary group, then we need to introduce a quadratic field extension of $F$. As this case is not the focus of this article, we refer the readers to \cite[Sec.~1.3]{wu21:chi-b-unitary_ii} for details. The exact conditions for $\Mp_{2n}$ are spelled out in
Sec.~\ref{sec:arthur-parameters}. Here we have adopted the notation of \cite{jiang14:_autom_integ_trans_class_group_i}
where the theory of $(\tau,b)$ was introduced.

One principle of the $(\tau,b)$-theory is that if $\phi (\sigma)$ has $(\tau,b)$ as a simple factor, then there should
exist a kernel function constructed out of $(\tau,b)$ that transfers $\sigma$ to a certain automorphic representation
$\cD (\sigma)$ in the $A$-packet attached to `$\phi(\sigma) \boxminus (\tau,b)$', i.e., $\phi(\sigma)$ with the factor $(\tau,b)$ removed.
From the work of Rallis \cite{MR658543} and Kudla--Rallis \cite{MR1289491}, it is clear when $\tau$ is a character $\chi$, the kernel function is the theta kernel possibly twisted by $\chi$. (See Sec.~7 of \cite{jiang14:_autom_integ_trans_class_group_i}.)

Now we focus on the case of the metaplectic groups. We will put the additive character $\psi$ back into the
notation to emphasise, for example, that the $L$-functions etc. depend on it. Let $X$ be a
$2n$-dimensional symplectic space.  We will now write $\til
{G} (X)$ for $\Mp_{2n}$ to match the notation in the body of the article.

The poles of the tensor product $L$-function
$L_\psi (s,\sigma\times\tau)$ detect the existence of $(\tau,b)$-factors in the $A$-parameter $\phi_\psi (\sigma)$. As a first step, we consider the case when $\tau$ is a quadratic automorphic character $\chi$ of $\GL_1 (\AA)$. The $L$-function $L_\psi (s,\sigma\times\chi)$ has been well-studied.
 By the regularised Rallis inner product formula \cite{MR1289491} which was proved using the regularised
Siegel-Weil formula also proved there and the doubling method construction  \cite{ps-rallis87_l_funct_for_the_class_group} (see also \cite{MR2192828,MR3006697,MR3211043}), the partial $L$-function $L_\psi^S (s,\sigma\times\chi)$ detects whether the theta
lifts of $\sigma$ to the odd orthogonal groups in certain Witt towers vanish or not. The complete theory that interprets the existence
of poles or non-vanishing of the complete
$L$-function as the obstruction to the local-global principle of  theta correspondence was done in \cite{MR3211043}
and then extended to the `second term' range by \cite{MR3279536}. We will relate certain invariants of the theta lifts of $\sigma$ to the existence of $(\chi,b)$-factors in the $A$-parameter $\phi_\psi(\sigma)$.

We now describe our main results which are concerned with the metaplectic case, while pointing out the similarity to the statements in the symplectic case. Let $\LO_{\psi,\chi} (\sigma)$ denote the lowest occurrence index of $\sigma$ in the theta
correspondence to all Witt towers associated to the quadratic character $\chi$. (See
Sec.~\ref{sec:first-occurr-lowest} for the precise definition.)  Let $\cA_\cusp (\til {G} (X))$ denote the
set of all irreducible genuine cuspidal automorphic representations of $\til {G} (X) (\AA)$. We note
that there is no algebraic group $\til {G} (X)$ and that the notation is used purely for aesthetic
reason. Then we get the following constraint on $(\chi,b)$-factors of $\phi_\psi(\sigma)$ in terms of the
invariant $\LO_{\psi,\chi} (\sigma)$.

\begin{thm}[Thm.~\ref{thm:chi-b-factor-LO}]
   Let $\sigma\in\cA_\cusp (\til {G} (X))$ and $\chi$ be a quadratic automorphic character of $\AA^\times$. Let $\phi_\psi (\sigma)$ denote
   the A-parameter of $\sigma$ with respect to $\psi$. Then the following hold.
   \begin{enumerate}
   \item If $\phi_\psi (\sigma)$ has a $(\chi,b)$-factor, then $b\le \half\dim X +1$.
  \item If $\phi_\psi (\sigma)$ has a $(\chi,b)$-factor with $b$ maximal among all simple factors of $\phi_\psi (\sigma)$, then  $\LO_{\psi,\chi}    (\sigma) \le \dim X - b +1$.
  \item If $\LO_{\psi,\chi} (\sigma) = 2j +1 < \dim X+2$, then $\phi_\psi (\sigma)$ cannot have a $(\chi, b)$-factor with $b$ maximal among all simple factors of $\phi_\psi (\sigma)$ and $b>\dim
    X -2j$.
  \item If $\LO_{\psi,\chi} (\sigma) = 2j +1 > \dim X+2$, then  $\phi_\psi (\sigma)$ cannot have a $(\chi, b)$-factor with $b$ maximal among all simple factors of $\phi_\psi (\sigma)$.
  \end{enumerate}
\end{thm}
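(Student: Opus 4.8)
The plan is to run the familiar chain ``$(\chi,b)$-factor of $\phi_\psi(\sigma)$'' $\Rightarrow$ ``pole of the standard $L$-function $L_\psi(s,\sigma\times\chi)$'' $\Rightarrow$ ``non-vanishing of a theta lift of $\sigma$ to an odd orthogonal group'' $\Rightarrow$ ``bound on $\LO_{\psi,\chi}(\sigma)$'', carried out now for the metaplectic group $\til G(X)$; parts (3) and (4) will then drop out as the contrapositive of (2), and (1) from the boundedness of the poles.

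\textbf{Step 1: $(\chi,b)$-factors and poles of $L_\psi(s,\sigma\times\chi)$.} Via the Gan--Ichino parametrization \cite{MR3866889} I would write $\phi_\psi(\sigma)=\boxplus_i(\tau_i,b_i)$, a symplectic-type parameter of dimension $\dim X$, so that $L_\psi(s,\sigma\times\chi)=\prod_i L(s,\tau_i\times\chi)$ with the shifts from the Arthur $\SL_2$ incorporated. Since $\chi^2=\mathbf 1$ forces $L(s,\chi\times\chi)=\zeta_F(s)$, a simple factor $(\chi,b)$ contributes a pole of $L_\psi(s,\sigma\times\chi)$ at a point $s_b$, with $b\mapsto s_b$ strictly increasing and affine, while $L(s,\tau_i\times\chi)$ is entire for $\tau_i\ne\chi$. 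Hence the rightmost pole of $L_\psi(s,\sigma\times\chi)$, if any, lies at $s_{b_0}$ where $b_0=\max\{b:(\chi,b)\text{ is a simple factor of }\phi_\psi(\sigma)\}$. For $\sigma$ cuspidal the doubling integral exhibits every pole of $L_\psi(s,\sigma\times\chi)$ as a pole of the Siegel Eisenstein series in the doubling construction, and these were located by Kudla--Rallis and lie in a bounded region; comparing $s_{b_0}$ with the rightmost such pole gives part (1).

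\textbf{Step 2: poles and non-vanishing of theta lifts.} For a quadratic space $V$ of dimension $2m+1$ with discriminant character $\chi$, I would invoke the regularized Rallis inner product formula --- resting on the regularized Siegel--Weil formula of Kudla--Rallis \cite{MR1289491} in the first-term range, its second-term refinement by Gan--Qiu--Takeda \cite{MR3279536}, and Yamana's complete local-global non-vanishing criterion \cite{MR3211043} --- to write $\|\theta^V_\psi(f)\|^2$ for $f\in\sigma$ as a product of local zeta integrals, nonzero for a suitable $f$, times a residue or value of $L_\psi(s,\sigma\times\chi)$ at a point affine in $m$. In the relevant range this point is exactly $s_{b_0}$ when $\dim V=\dim X-b_0+1$, so a pole of $L_\psi(s,\sigma\times\chi)$ at $s_{b_0}$ forces $\theta^V_\psi(\sigma)\ne0$ for the space $V$ of that dimension whose local invariants match $\sigma$. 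Together with Rallis's tower property \cite{MR658543} --- a nonzero theta lift persists up its Witt tower, and the first-occurrence lift is cuspidal, with $A$-parameter $\phi_\psi(\sigma)\boxminus(\chi,b_0)$ by the compatibility of theta with $A$-parameters --- this pins the first occurrence index in that tower to $\dim X-b_0+1$; a smaller value is impossible, as further descent would require a simple factor $(\chi,b')$ with $b'>b_0$.

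\textbf{Step 3: the four conclusions.} If $\phi_\psi(\sigma)$ has a $(\chi,b)$-factor with $b$ maximal among all simple factors, then $b=b_0$, and Step 2 supplies a nonzero theta lift of $\sigma$ to the odd orthogonal group with $A$-parameter $\phi_\psi(\sigma)\boxminus(\chi,b)$, of dimension $\dim X-b+1$; hence $\LO_{\psi,\chi}(\sigma)\le\dim X-b+1$, which is (2). Parts (3) and (4) are then the contrapositive of (2): writing $\LO_{\psi,\chi}(\sigma)=2j+1$, the bound $\LO_{\psi,\chi}(\sigma)\le\dim X-b+1$ fails exactly when $b>\dim X-2j$, so if $2j+1<\dim X+2$ no $(\chi,b)$-factor with $b$ maximal and $b>\dim X-2j$ can occur, and if $2j+1>\dim X+2$ the condition $b>\dim X-2j$ is automatic and no $(\chi,b)$-factor with $b$ maximal can occur at all.

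The step I expect to be the main obstacle is Step 2: the metaplectic normalizations --- the half-integral shifts and $\psi$-twists in $L_\psi(s,\sigma\times\chi)$ and in the Siegel--Weil sections --- must be fixed so that Steps 1 and 2 agree numerically; the several Witt towers with discriminant character $\chi$ must be organised, with the one carrying the going-down first occurrence singled out; and, most delicately, non-vanishing at the boundary of the first-term range forces the use of the second-term Siegel--Weil identity of Gan--Qiu--Takeda together with Yamana's criterion in place of the classical Kudla--Rallis formula.
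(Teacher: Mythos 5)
Your chain for parts (2)--(4) is essentially sound and close in spirit to the paper: $(\chi,b)$-factor with $b$ maximal $\Leftrightarrow$ rightmost pole of $L^S_\psi(s,\sigma\times\chi)$ at $s=(b+1)/2$, then a pole forces a nonzero theta lift to an odd orthogonal group of dimension $\dim X-b+1$, whence $\LO_{\psi,\chi}(\sigma)\le\dim X-b+1$, and (3), (4) follow by contraposition. Where you differ is the middle step: you go through the regularized Rallis inner product formula (Gan--Qiu--Takeda, Yamana), which requires choosing a single global quadratic space $V$ with the right local invariants so that the local factors are nonvanishing and bridging the partial/complete $L$-function discrepancy, whereas the paper works with the Eisenstein series attached to the cuspidal datum $\chi\otimes\sigma$ on $\til G(X_a)$, relates it to the Siegel Eisenstein series on the doubled group, and applies the regularized Siegel--Weil formula to its residue, which produces a \emph{sum} over all $Y$ of dimension $2j+1$ with $\chi_Y=\chi$ and so avoids the local non-vanishing bookkeeping entirely. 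Also, your claim that the first occurrence equals \emph{exactly} $\dim X-b_0+1$, justified by ``compatibility of theta with $A$-parameters,'' is not available from the cited results and is not needed: only the inequality enters (2)--(4).

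The genuine gap is part (1). You propose to deduce $b\le\tfrac12\dim X+1$ by comparing $s_{b_0}=(b_0+1)/2$ with the rightmost pole of the Siegel Eisenstein series in the doubling construction. That comparison (Kudla--Rallis/Ikeda pole locations, or equivalently the bound of \cite[Prop.~6.2]{MR3595433} that positive poles of $L^S_\psi$ lie in $\{\tfrac32,\dots,\tfrac{\dim X+1}{2}\}$) only yields $b\le\dim X$, which is already trivial from $\sum_i n_ib_i=\dim X$. The sharpening by a factor of two is the real content of (1), and in the paper it comes from the lower bound on the lowest occurrence: if the maximal Eisenstein pole is $\tfrac12(\dim X+1)-j$, then $2j+1\ge r_X=\tfrac12\dim X$ (Thm.~\ref{thm:Eis-pole-2-bound-on-LO}(3)). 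That inequality is proved by noting the first-occurrence lift $\pi=\theta^Y_{\psi,X}(\sigma)$ is cuspidal and irreducible, invoking the stable-range bound $\FO_{\psi,Y}^X(\pi)\le 2\dim Y$ and the Jiang--Soudry involutive property $\theta^X_{\psi,Y}(\theta^Y_{\psi,X}(\sigma))=\sigma$, which forces $\dim X\le 2\,\LO_{\psi,\chi}(\sigma)$; combined with $\LO\le\dim X-b+1$ this gives $b\le\tfrac12\dim X+1$. Nothing in your Step 1 (nor in your Step 2 as written) supplies such a lower bound on $\LO_{\psi,\chi}(\sigma)$ or $\FO^Y_\psi(\sigma)$, so as it stands your argument does not prove (1); you would need to add the cuspidality-plus-involution (going-back) argument, or an equivalent lower bound, to close it.
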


 \begin{rmk}
   Coupled with the results of \cite{MR3805648}, we see that in both the $\Sp$ and $\Mp$ cases, if $\phi (\sigma)$ has a $(\chi,b)$-factor with $b$ maximal among all simple factors, then  $\LO_{\chi,\psi}    (\sigma) \le \dim X - b +1$.
 \end{rmk}

 The symplectic analogue of our result on the  existence of $(\chi,b)$-factors in the $A$-parameter of $\sigma$ is a key input to \cite[Sec.~5]{MR3969876} which  gives a bound on the exponent that measures the departure from  temperedness of the local components of cuspidal automorphic representations.  In this sense,  our results have bearings on the generalised Ramanujan conjecture as proposed in \cite{MR2192019}.

 Following an idea of M\oe glin's in \cite{MR1473165} which considered the even orthogonal  and the
symplectic case, we consider, instead of  the partial $L$-function $L^S (s,\sigma\times\chi)$, the Eisenstein series $E (g,f_s)$ attached to the
cuspidal datum $\chi\otimes\sigma$ (c.f. Sec.~\ref{sec:eisenstein-series}). The theorem above is derived from our results on poles of $E (g,f_s)$. We get much more precise relation between poles of the
Eisenstein series and $\LO_{\psi,\chi} (\sigma)$. This line of investigation has been taken up by \cite{MR2540878}
in the odd orthogonal group case and by \cite{MR3435720,wu21:chi-b-unitary_ii} in the unitary group case. We treat the
metaplectic group case in this paper. Let $\cP_1 (\chi,\sigma)$ denote the set of all positive poles
of the Eisenstein series we consider. Then we get:

\begin{thm} [Thm.~\ref{thm:strengthend-eis-pole-2-LO}]
    Let $\sigma\in\cA_\cusp (\til {G} (X))$ and let $s_0$ be the maximal element in $\cP_1 (\chi,\sigma)$. Write
    $s_0=\half (\dim X +1)-j$. Then
  \begin{enumerate}
  \item $j$ is an integer such that $\frac{1}{4} (\dim X -2) \le j < \half (\dim X +1)$.
  \item $\LO_{\psi,\chi} (\sigma) = 2j+1$.
  \end{enumerate}
\end{thm}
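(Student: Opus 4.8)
The plan is to build a two-step dictionary relating three things: the positive poles of $E(g,f_s)$, the poles of the completed doubling $L$-function $L_\psi(s,\sigma\times\chi)$, and the non-vanishing of the theta lifts of $\sigma$ to the odd orthogonal groups in the two Witt towers attached to $\chi$. Granting the dictionary, assertion (2) and the range in (1) fall out by numerology, the only non-formal input being the \emph{a priori} fact that a cuspidal representation of $\til G(X)$ has lowest occurrence index at least $\half\dim X$. The first step of the dictionary is a constant-term computation matching poles of $E(g,f_s)$ in a fixed right half-plane with poles of $L_\psi(s,\sigma\times\chi)$; the second is the regularised Siegel--Weil and Rallis inner product formulas. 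One may also proceed in the spirit of M\oe glin's argument in \cite{MR1473165} without invoking the $L$-function: form the residue $\cE:=\Res_{s=s_0}E(g,f_s)$, a square-integrable automorphic form on $\til G(X_1)(\AA)$ (where $X_1\supset X$ is obtained by adding a hyperbolic plane), and use Arthur truncation to compute its constant terms and Fourier--Jacobi coefficients, identifying --- via the regularised Siegel--Weil formula --- the relevant Fourier--Jacobi coefficient of $\cE$ with the theta lift $\theta_{\psi,V}(\sigma)$ for the space $V$ whose dimension is dictated by $s_0$. I describe the steps in the $L$-function language, each having a residue-side counterpart.

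For the first step I would compute the constant term of $E(g,f_s)$ along the unipotent radical of the maximal parabolic of $\til G(X_1)$ with Levi $\GL_1\times\til G(X)$, on which the inducing data is $\chi\lvert\cdot\rvert^s\otimes\sigma$. By a Langlands--Shahidi-type analysis in the metaplectic setting, using the doubling normalisation of the references cited in the introduction, this constant term is assembled from a normalised (holomorphic, non-vanishing) intertwining operator and a ratio of completed $L$-functions whose only relevant factors are $L_\psi(s,\sigma\times\chi)$ and an abelian factor of the shape $\zeta_F(2s)$. The abelian factor is holomorphic and non-zero once $\Re(s)$ exceeds an absolute threshold $s_1$, so for $\Re(s)>s_1$ the poles of $E(g,f_s)$ are precisely those of $L_\psi(s,\sigma\times\chi)$; since the \emph{a priori} bound below forces $s_0>s_1$, the maximal positive pole $s_0$ is the rightmost pole of $L_\psi(s,\sigma\times\chi)$.

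For the second step I would apply the regularised Rallis inner product formula in the first-term range --- Kudla--Rallis \cite{MR1289491}, Gan--Qiu--Takeda \cite{MR3279536}, Yamana \cite{MR3211043} --- to the dual pairs $(\til G(X),\rO(V))$ with $V$ running over the two Witt towers of odd quadratic spaces of discriminant character $\chi$. For such $V$ lying in that range --- which the lowest occurrence always does --- the non-vanishing of $\theta_{\psi,V}(\sigma)$ is equivalent to the non-vanishing of the residue of $L_\psi(s,\sigma\times\chi)$ at $s=\half(\dim X+2-\dim V)$, the local obstructions being vacuous there because the global first occurrence dominates all local ones. Together with Rallis's tower property this yields: with $\dim V_0=\LO_{\psi,\chi}(\sigma)$, the function $L_\psi(s,\sigma\times\chi)$ is holomorphic for $\Re(s)>\half(\dim X+2-\dim V_0)$ and has a genuine (simple) pole at $s=\half(\dim X+2-\dim V_0)$; by the first step this is $s_0$. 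Substituting $s_0=\half(\dim X+1)-j$ gives $j=\half(\dim V_0-1)$, i.e.\ $\LO_{\psi,\chi}(\sigma)=2j+1$, which is (2) and shows $j$ is a non-negative integer. As for the range in (1): the bound $j<\half(\dim X+1)$ is merely the positivity $s_0>0$; and the \emph{a priori} bound $\LO_{\psi,\chi}(\sigma)\ge\half\dim X$ --- valid because a cuspidal representation of $\til G(X)$ cannot be a theta lift from an orthogonal group of dimension below $\half\dim X$, which follows from the tower property on the orthogonal side together with the conservation-type bound on the first occurrence of orthogonal representations in the metaplectic tower --- translates, through $s_0=\half(\dim X+1)-j$, into $j\ge\frac14(\dim X-2)$; this also retroactively justifies $s_0>s_1$ used in the first step.

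The step I expect to be the main obstacle is the upgrade, in the argument above, from an \emph{inequality} to an \emph{equality} between the rightmost pole of $L_\psi(s,\sigma\times\chi)$ and $\half(\dim X+2-\LO_{\psi,\chi}(\sigma))$: one must establish both that first occurrence produces an honest pole (not merely a point at which a residue \emph{may} be non-zero) and that no further pole of $L_\psi(s,\sigma\times\chi)$ --- equivalently, on the M\oe glin route, no residue of $E(g,f_s)$ beyond the one detected --- lies strictly to its right. This forces one to control the Rallis inner product formula, or the Arthur-truncated inner product of the residue $\cE$, precisely at the junction of the first- and second-term ranges (which is where $\LO_{\psi,\chi}(\sigma)$ can sit when it equals $\dim X+1$), and this is exactly the place where the present statement sharpens the earlier, inequality-only, result. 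A secondary, purely bookkeeping, difficulty is to keep the metaplectic normalisations --- dependence on $\psi$, the choice of Weil representation, the precise abelian normalising factor, and the shift between the doubling and arithmetic normalisations of the variable $s$ --- consistent across the Eisenstein-series side and the theta side.
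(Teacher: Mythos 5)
Your route is genuinely different from the paper's, and the decisive step --- the lower bound $\LO_{\psi,\chi}(\sigma)\ge 2j+1$ --- is not actually established by it. The paper proves this inequality by showing (Thm.~\ref{thm:distinction-2-eis-pole} and Cor.~\ref{cor:FO-2-distinction-res-eis-theta}) that first occurrence at $\dim Y+2r$ forces $E(g,f_s)$ to have a pole at $s=\half(\dim X-(\dim Y+2r)+2)$; this is done by regularising $\int_{[J(Z_1,L)]}\Lambda^cE(g,s,f)\,\overline{\theta_{\psi,X_1,Y}(g,1,\Phi)}\,dg$ with Arthur truncation, decomposing along $J(Z_1,L)$-orbits, and showing in Sec.~\ref{sec:computation} that all pieces vanish under the non-distinction hypotheses except $I_{\Omega_{0,1}}$ (resp.\ $J_{\Omega_{0,1},2}$), whose explicit evaluation produces the pole at exactly $s_0$. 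As the paper itself remarks, pole-of-$L$-function considerations alone yield only the upper bound $\LO_{\psi,\chi}(\sigma)\le 2j+1$. Your substitute is a two-step dictionary: poles of $E(g,f_s)$ matched with poles of the completed $L_\psi(s,\sigma\times\chi)$, then matched with the lowest occurrence via the regularised Rallis inner product formula. Both steps have real gaps. The first asserts that in the relevant strip the poles of $E(g,f_s)$ are \emph{precisely} those of the completed $L$-function; this presupposes holomorphy and non-vanishing of normalised local intertwining operators at the ramified and archimedean places of the metaplectic group, which is not available in the cited literature and is exactly what the paper avoids: Prop.~\ref{prop:L-pole-2-Eis-pole} proves only the one-way implication from the rightmost pole of the \emph{partial} $L$-function to a pole of $E$. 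Moreover, a pole of the completed $L$-function need not be a pole of the partial one (local factors at bad places can contribute poles), so even granting a Yamana-type equivalence \cite{MR3211043} between $\LO_{\psi,\chi}(\sigma)$ and poles of the completed $L$-function, you cannot conclude that $\half(\dim X+2-\LO_{\psi,\chi}(\sigma))$ lies in $\cP_{1,\psi}(\chi,\sigma)$.

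In the second step, the claim that ``the local obstructions are vacuous because the global first occurrence dominates all local ones'' is the heart of the matter and is not proved: the Rallis inner product formula expresses the Petersson norm of the lift as a residue or value of the completed $L$-function multiplied by local zeta integrals, and extracting an honest pole of the right object, while ruling out poles strictly to its right, precisely at the junction of the first- and second-term ranges, is the step you yourself flag as ``the main obstacle'' and then leave unresolved --- but that is exactly the sharpening the theorem requires, and it is what the paper supplies through the period computation of Secs.~\ref{sec:peri-eisenst-seri}--\ref{sec:computation}. Your parenthetical M{\oe}glin-style alternative (truncated Fourier--Jacobi coefficients of the residue $\cE_{s_0}$) is in fact much closer to the paper's actual mechanism, but it is only named, not carried out. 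The remaining ingredients do match the paper numerically: part (1) and the inequality $\LO_{\psi,\chi}(\sigma)\le 2j+1$ are the content of Thm.~\ref{thm:Eis-pole-2-bound-on-LO} (via the doubled-group Siegel Eisenstein series and the regularised Siegel--Weil formula), and your a priori bound $\LO_{\psi,\chi}(\sigma)\ge\half\dim X$ is the paper's $2j+1\ge r_X$, though the paper derives it from cuspidality and irreducibility of the first-occurrence lift together with the Jiang--Soudry involution \cite{MR2330445} rather than from a conservation-relation argument.
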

\begin{rmk}
  Coupled with the results of \cite{MR3805648}, we see that $s_0$ is a non-integral half-integer in
  $\openclose{0} {\frac{1}{4}\dim X +1}$ in the $\Mp$ case and an integer in the same range in the
  $\Sp$ case; and $\LO_{\psi,\chi} (\sigma) = \dim X +2 -2s_0$ in both cases.
\end{rmk}

It is known by Langlands' theory \cite{MR0419366} of Eisenstein series that the pole in the theorem is simple. (See also
\cite{MR1361168}.)  We note that the equality in the theorem is actually informed by considering
periods of residues of the Eisenstein series. We get only the upper bound $2j+1$ by considering only
poles of Eisenstein series. The odd orthogonal case was considered in \cite{MR2540878}, the
symplectic case in \cite{MR3805648} and  the unitary case in \cite{wu21:chi-b-unitary_ii}. Each case has its own technicalities. In this paper, we extend all results
in the symplectic case to the metaplectic case to complete the other half of the picture. As pointed
out in \cite {MR3006697}, unlike the orthogonal or symplectic case, the local factors change in the
metaplectic case when taking contragredient. Thus it is important, in the present case, to employ
complex conjugation in the definition of the global theta lift.

Let $\FO_{\psi}^Y (\sigma)$ denote the first occurrence index of $\sigma$ in the Witt tower
of $Y$ where $Y$ is a quadratic space (c.f. Sec.~\ref{sec:first-occurr-lowest}). We get a result on
$\FO_{\psi}^Y (\sigma)$ and the non-vanishing of periods of residues of the Eisenstein series (or distinction by subgroups
of $G (X)$). Note that
$\FO_{\psi}^Y (\sigma)$ is a finer invariant than $\LO_{\psi,\chi} (\sigma) $ while the non-vanishing
of periods of
residues provides more information  than   locations of  simple poles do. The following theorem is part of
Cor.~\ref{cor:FO-2-distinction-res-eis-theta}. Let $\cE_{s_0} (g,f_s)$ denote the residue
of the Eisenstein series $E (g,f_s)$ at $s=s_0$ and let  $\theta_{\psi,X_1,Y} (g,1,\Phi)$ be the theta function attached to the
Schwartz function $\Phi$. The space $X_1$ is the symplectic space formed by adjoining a
hyperbolic plane to $X$ and similarly $Z_1$ is formed by adjoining the same hyperbolic place to $Z$. Thus $Z_1\subset X_1$. Let $L\subset Z \subset Z_1$ be a totally isotropic subspace. Then the  group $J (Z_1,L)$ is defined to be the
subgroup of $G (Z_1)$ that fixes  $L$ element-wise.
\begin{thm} 
  Let $\sigma\in\cA_\cusp (\til {G} (X))$ and let $Y$ be an anisotropic quadratic space of odd dimension. Assume that
  $\FO_{\psi}^Y (\sigma)= \dim Y+2r$. Let $s_0 = \half (\dim X - (\dim Y +2r ) +2)$. Then $s_0$ is a pole of $E (g,f_s)$ for  some choice of the section $f_s$. Assume further that $s_0>0$. Then there exist a non-degenerate symplectic subspace $Z$ of $X$ and an isotropic subspace $L$ of $Z$
satisfying $\dim X - \dim Z + \dim L = r$ and $\dim L \equiv r \pmod {2}$ with $\dim L=0$ or $1$ such that
  $\cE_{s_0} (g,f_s)\overline {\theta_{\psi,X_1,Y} (g,1,\Phi)}$ is $J (Z_1,L)$-distinguished for  some choice of the section $f_s$ and the Schwartz function $\Phi$.
\end{thm}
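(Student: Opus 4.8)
The plan is to derive both assertions from the machinery already assembled in the paper --- the tower property of theta lifts, the regularised Siegel--Weil and Rallis inner product formulae, and the Arthur-truncation computations behind the earlier poles-of-Eisenstein-series results --- organised around an induction on $r$. Throughout I would write $\chi=\chi_Y$ for the quadratic character of $\AA^\times$ attached to $Y$, so that $E(g,f_s)$ is the Eisenstein series on $\til{G}(X_1)$ induced from $\chi|\cdot|^s\otimes\sigma$ on the parabolic with Levi $\GL_1\times\til{G}(X)$, and I would set $Y_r:=Y\perp\mathbb{H}^r$. The first thing to record is that, because $\FO_\psi^Y(\sigma)=\dim Y+2r$, the theta lift $\pi$ of $\sigma$ to $\rO(Y_r)(\AA)$ is nonzero and --- the lift to $\rO(Y_{r-1})$ being zero --- cuspidal, while $\sigma$ still occurs in the theta lift of $\pi$ back to $\til{G}(X)$.

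For the first assertion I would feed this non-vanishing into the doubling integral for $\sigma\times\chi_Y$: exactly as in the proof of the $\LO_{\psi,\chi}$-results above, the regularised Rallis inner product formula then forces $L_\psi^S(s,\sigma\times\chi_Y)$ to be non-holomorphic at $s_0=\half(\dim X-\dim Y_r+2)$, and the constant-term formula of Langlands' theory of Eisenstein series turns this into a pole of $E(g,f_s)$ for a suitable section (the freedom in $f_s$ absorbing the possible cancellation against the denominator of the intertwining operator). Equivalently, one identifies the residual representation of $\til{G}(X_1)(\AA)$ generated by $\cE_{s_0}$ with the theta lift of $\pi$ from $\rO(Y_r)$ to $\til{G}(X_1)$, which lies in the residual spectrum rather than being cuspidal because the theta lift of $\pi$ already becomes cuspidal one step lower, on $\til{G}(X)$, where it is $\sigma$.

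For the distinction I would assume $s_0>0$ and induct on $r$. In the base case $r=0$ one has $Z=X$, $L=0$, $Z_1=X_1$ and $J(Z_1,L)=\til{G}(X_1)$, so the claim becomes $\int_{[\til{G}(X_1)]}\cE_{s_0}(g,f_s)\,\overline{\theta_{\psi,X_1,Y}(g,1,\Phi)}\,dg\neq0$ for suitable $f_s,\Phi$, i.e.\ the non-vanishing of the $\rO(Y)$-theta lift of the residual representation $\cE_{s_0}$; this I would get from the non-vanishing of the theta lift of $\sigma$ to $\rO(Y)$ via the see-saw attached to $\til{G}(X)\times\til{G}(\mathbb{H})\subset\til{G}(X_1)$, after inserting the constant term of $\cE_{s_0}$ along the parabolic with Levi $\GL_1\times\til{G}(X)$, whose surviving exponent $\chi_Y|\cdot|^{s_0}\otimes\sigma$ with $s_0>0$ is exactly what makes the remaining $\GL_1$-integral non-vanishing. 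For $r\ge1$, I would write $\cE_{s_0}$ as the theta lift of $\pi$ from $\rO(Y_r)$ to $\til{G}(X_1)$, substitute it into the period, and interchange integrations, so that one is left integrating $\theta_{\psi,X_1,Y_r}(g,h,\Phi')\,\overline{\theta_{\psi,X_1,Y}(g,1,\Phi)}$ over $g\in J(Z_1,L)$ against a vector of $\pi$ in the variable $h\in[\rO(Y_r)]$. The complex conjugation flips the additive character, so this product is a theta kernel for the pair $\bigl(\til{G}(X_1),\rO(Y_r\perp(-Y))\bigr)$, and since $Y_r\perp(-Y)\cong\mathbb{H}^{\dim Y+r}$ is split, the regularised Siegel--Weil formula (in the appropriate term range) converts the inner $g$-integral into a Siegel Eisenstein series on this split orthogonal group; unwinding its residue one hyperbolic plane at a time shrinks the ambient symplectic space by a codimension-two non-degenerate subspace at each step, and at the single step where a totally isotropic \emph{line} cannot be avoided one is forced to take $\dim L=1$ --- which is exactly the source of the constraints $\dim X-\dim Z+\dim L=r$, $\dim L\equiv r\pmod 2$ and $\dim L\in\{0,1\}$. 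The descent terminates when the remaining inner integral is a manifestly non-vanishing period of the cuspidal $\pi$ (equivalently of $\sigma$), which delivers the asserted $J(Z_1,L)$-distinction.

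The hard part will be analytic. The period $\int_{[J(Z_1,L)]}\cE_{s_0}(g,f_s)\,\overline{\theta_{\psi,X_1,Y}(g,1,\Phi)}\,dg$ is not absolutely convergent --- $\cE_{s_0}$ has only moderate growth, and although $\theta_{\psi,X_1,Y}(g,h,\Phi)$ is rapidly decreasing in $h$ because $Y$ is anisotropic it is only of moderate growth in $g$ --- so the whole descent has to be run with a mixed Arthur truncation, and at each stage one must verify that the truncation error is negligible and that truncation commutes with the successive unfoldings and with taking the residue at $s_0$; this is where most of the work will lie. A second, genuinely metaplectic, subtlety is that the local theta factors are not stable under contragredient, so the complex conjugation on $\theta_{\psi,X_1,Y}$ must be carried faithfully through every see-saw and Siegel--Weil identity, and one has to make the Weil indices and quadratic characters match on the nose rather than up to an uncontrolled sign --- this is precisely where the argument departs from the symplectic case of \cite{MR3805648}. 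Finally there is the bookkeeping that the subspaces $Z$ and $L$ produced by the descent satisfy all three numerical constraints simultaneously and that $s_0>0$ is exactly the condition that keeps the descent from overshooting.
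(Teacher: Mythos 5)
Your plan has two genuine gaps, one in each half. For the pole at $s_0$: the doubling/Rallis-inner-product route only sees the \emph{lowest} occurrence $\LO_{\psi,\chi_Y}(\sigma)$, i.e.\ the minimum of the first occurrences over all Witt towers with character $\chi_Y$, so it cannot force $L^S_\psi(s,\sigma\times\chi_Y)$ to be non-holomorphic at $s_0=\half(\dim X-(\dim Y+2r)+2)$. If some other tower with the same character has strictly lower first occurrence, the rightmost pole of the partial $L$-function lies strictly to the right of $s_0$ and nothing prevents holomorphy at $s_0$, yet the theorem still asserts a pole of $E(g,f_s)$ there; this is exactly why $\FO^Y_\psi$ is called a finer invariant than $\LO_{\psi,\chi}$ and why the paper does not argue through $L^S$. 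Instead it first proves (Prop.~\ref{prop:FO-2-distinction}, via Fourier coefficients of the theta lifts in the tower of $Y$) that $\sigma\otimes\overline{\Theta_{\psi,X,Y}}$ is $J(Z,L)$-distinguished with precisely the stated numerology and is not distinguished by any larger $J(Z',L')$, and then (Thm.~\ref{thm:distinction-2-eis-pole}) extracts the pole from the truncated period $\int_{[J(Z_1,L)]}\Lambda^cE(g,s,f)\overline{\theta_{\psi,X_1,Y}(g,1,\Phi)}\,dg$: the orbit terms attached to larger groups vanish by non-distinction, and the surviving term carries the factor $c^{s-s_0}/(s-s_0)$ times the nonzero $J(Z,L)$-period, so $E$ or $M(w,s)$ must have a pole at $s_0$. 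Your fallback, identifying $\cE_{s_0}$ with the theta lift of $\pi$ from $\rO(Y_r)$ to $\til{G}(X_1)$, is a substantive unproved claim (and you would already need the pole to even speak of $\cE_{s_0}$), so it cannot substitute for this step.

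For the distinction statement, the pivotal analytic move you propose fails: after substituting the (unproved) theta-lift expression for $\cE_{s_0}$ you want to evaluate $\int_{[J(Z_1,L)]}\theta_{\psi,X_1,Y_r}(g,h,\Phi')\overline{\theta_{\psi,X_1,Y}(g,1,\Phi)}\,dg$ by the regularised Siegel--Weil formula, but Siegel--Weil computes the integral of the theta kernel over the \emph{full} group $[G(X_1)]$ for the dual pair, not over a proper Jacobi subgroup $J(Z_1,L)$; no such formula exists for these Jacobi periods, so the inner $g$-integral does not become a Siegel Eisenstein series and the subsequent ``unwinding one hyperbolic plane at a time'' has nothing to act on. This is precisely the point where the paper instead decomposes $Q_1\backslash G(X_1)$ into $J(Z_1,L)$-orbits, computes each orbit contribution in Sec.~\ref{sec:computation}, and shows that the residue at $s=s_0$ of the single surviving term is essentially $\int_{[J(Z,L)]}\phi(h)\overline{\theta_{\psi,X,Y}(h,1,\Psi)}\,dh$ (Propositions~\ref{prop:I-Omega01-L-0} and \ref{prop:J-Omega01-2-L-1}), nonzero by Prop.~\ref{prop:FO-2-distinction}; the identity $\int_{[J(Z_1,L)]}\cE_{s_0}(g,f_s)\overline{\theta_{\psi,X_1,Y}(g,1,\Phi)}\,dg=\res_{s=s_0}I(\xi_{c,s})$, which simultaneously legitimises the divergent period and yields the distinction, comes out of that truncation bookkeeping (proof of Thm.~\ref{thm:period-res-eis-theta}), not out of an induction on $r$. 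Note also that in your outline the pair $(Z,L)$ with $\dim X-\dim Z+\dim L=r$, $\dim L\equiv r\pmod 2$, $\dim L\in\{0,1\}$ is never actually produced: in the paper that numerology is the output of the Fourier--Jacobi coefficient analysis behind Prop.~\ref{prop:FO-2-distinction}, which your descent replaces by an unspecified step.
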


We hope our period integrals can inform problems in Arithmetic Geometry. For example, similar work
in \cite{MR2540878} has been used in \cite{MR3508219}. (See also \cite {MR3821921} and
\cite{MR3012154}). It should be pointed out that unlike the orthogonal or the unitary case, our
period integrals may involve a Jacobi group $J (Z_1,L)$ when $L$ is non-trivial (which occurs half
of the time). This is to account for the lack of `odd symplectic group'.

The structure of the paper is as follows. Since we aim to generalise results of symplectic groups to metaplectic groups, we try not deviate too much from the structure of \cite{MR3805648}. Many of the results there generalise readily, in which case, we put in a few words to explain why this is so. However at several points, new input is needed to get the proof through, in which case, we derive the results in great detail, noting all the subtleties. We hope in this way, this article makes the flow of arguments more evident than   \cite{MR3805648} in which we dealt with many technicalities.

In Sec.~\ref{sec:notat-prel}, we set up some common notation and in
Sec.~\ref{sec:arthur-parameters}, we describe the global $A$-parameters of genuine automorphic representations of the
metaplectic group following \cite{MR3866889}. In Sec.~\ref{sec:eisenstein-series}, we introduce the Eisenstein series
attached to the cuspidal datum $\chi\otimes\sigma$ and deduce results on their maximal poles. In
Sec.~\ref{sec:first-occurr-lowest}, we define the two invariants, the first occurrence index and the lowest occurrence index of
theta lifts. We deduce a preliminary result on the relation between the lowest occurrence index and the maximal pole of the Eisenstein series and  hence get a characterisation of the locations of poles of the partial $L$-function.  In this way, we are able to show that the lowest occurrence index poses constraints on the existence of
$(\chi,b)$-factors in the $A$-parameter of $\sigma$. Then in Sec.~\ref{sec:four-coeff-theta}, we turn to studying Fourier
coefficients of theta lifts and we get results on vanishing and non-vanishing of certain periods on $\sigma$. Inspired
by these period integrals, in Sec.~\ref{sec:peri-eisenst-seri}, we study the augmented integrals which are periods of
residues of Eisenstein series. We make use of the Arthur truncation to regularise the integrals. We derive results on first
occurrence indices and the vanishing and the non-vanishing of periods of residues of Eisenstein series. To evaluate our period integral, we cut it into many pieces according to certain orbits in a flag variety in Sec.~\ref{sec:peri-eisenst-seri} and  we
devote Sec.~\ref{sec:computation} to the computation of each piece.

\section*{Acknowledgement}
\label{sec:acknowledgement}
The author would like to thank her post-doctoral mentor, Dihua Jiang, for introducing her to the subject and leading her
down this fun field of research.

\section{Notation and Preliminaries}
\label{sec:notat-prel}

Since this article aims at generalising results in \cite{MR3805648} to the case of metaplectic groups, we will adopt
similar notation to what was used there.

Let $F$ be a number field and $\AA  =\AA_F$ be its ring of adeles. Let $(X, \form{\ }{\ }_X)$ be a non-degenerate
symplectic space over $F$. For a non-negative integer $a$, let $\cH_a$ be the direct sum of $a$ copies of the hyperbolic
plane. We form the augmented symplectic space $X_a$ by adjoining $\cH_a$ to $X$. Let $e_1^+,\ldots, e_a^+,e_1^-,\ldots ,
e_a^-$ be a basis of $\cH_a$ such that $\form{e_i^+}{e_j^-}=\delta_{ij}$ and
$\form{e_i^+}{e_j^+}=\form{e_i^-}{e_j^-}=0$, for $i,j=1,\ldots , a$, where $\delta_{ij}$ is the Kronecker delta. Let
$\ell_a^+$ (resp. $\ell_a^-$) be the span of $e_i^+$'s (resp. $e_i^-$'s). Then $\cH_a = \ell_a^+ \oplus \ell_a^-$ is a
polarisation of $\cH_a$. Let $G (X_a)$ be the isometry group of $X_a$ with the action on the right.

Let $L$ be an isotropic subspace of $X$. Let $Q (X,L)$ denote the  parabolic subgroup of $G (X)$ that stabilises $L$ and
let $N (X,L)$ denote its unipotent radical. We write $Q_a$ (resp. $N_a$) for $Q (X_a,\ell_a^-)$ (resp. $N
(X_a,\ell_a^-)$) as shorthand. We also write $M_a$ for the Levi subgroup of $Q_a$ such that with respect to the `basis'
$\ell_a^+,X,\ell_a^-$, $M_a$ consists of elements of the form
\begin{equation*}
  m (x,h)=
  \begin{pmatrix}
    x & & \\ & h & \\ && x^*
  \end{pmatrix} \in G (X_a)
\end{equation*}
for $x\in\GL_a$ and $h\in G (X)$ where $x^*$ is the adjoint of $x$. Fix a  maximal compact subgroup $K_{G (X),v}$ of $G
(X) (F_v)$ for each place $v$ of $F$ and set $K_{G (X)} = \prod_v K_{G (X),v}$. We require that $K_{G (X)}$ is good in the sense that the Iwasawa
decomposition holds and that it is compatible with the Levi decomposition (c.f. \cite [I.1.4] {MR1361168}). Most often we
abbreviate $K_{G (X_a)}$ as $K_a$.

Next we describe the metaplectic groups. Let $v$ be a place of $F$. Let $\til {G} (X_a) (F_v)$ denote the
metaplectic double cover of $G (X_a) (F_v)$. It sits in the unique non-trivial central extension
\begin{equation*}
 1\rightarrow  \mu_2 \rightarrow \til {G} (X_a) (F_v) \rightarrow G (X_a) (F_v) \rightarrow 1
\end{equation*}
unless $F_v =\CC$, in which case, the double cover splits. Let $\til {G} (X_a) (\AA)$ be the double cover of
$G (X_a) (\AA)$ defined as the restricted product of $\til {G} (X_a) (F_v)$ over all places $v$ modulo the group
\begin{equation*}
 \{ (\zeta_v) \in \oplus_v \mu_2 | \prod_v \zeta_v = 1\}.
\end{equation*}
We note that there is a canonical lift of $G (X) (F)$ to $\til {G} (X) (\AA)$ and that $\til {G} (X) (F_v)$ splits
uniquely over any unipotent subgroup of $G (X) (F_v)$.

Fix a non-trivial additive character $\psi: F\lmod \AA \rightarrow \CC^1$. It will be used in the construction of the  Weil
representation that underlies the theta correspondence and the definition of Arthur parameters for cuspidal automorphic
representations of the metaplectic group. For any subgroup $H$ of $G (X_a) (\AA)$ (resp. $G (X_a) (F_v)$)
we write $\til {H}$ for its preimage in $\til {G} (X_a) (\AA)$ (resp. $\til {G} (X_a) (F_v)$). We note that
$\GL_a (F_v)$ occurs as a direct factor of $M_a (F_v)$ and on $\til {\GL}_a (F_v)$ multiplication is given by
\begin{equation*}
  (g_1,\zeta_1) (g_2, \zeta_2) = (g_1g_2,\zeta_1\zeta_2 (\det (g_1), \det (g_2))_v)
\end{equation*}
which has a Hilbert symbol twist when multiplying the $\mu_2$-part. Analogous to the notation $m (x,h)$ above, we write
\begin{equation*}
  \til {m} (x,h,\zeta) =
  (\begin{pmatrix}
   x & & \\ & h & \\ && x^*
   \end{pmatrix},\zeta) \in \til {G} (X_a) (\AA)
\end{equation*}
for $x\in\GL_a (\AA)$, $h\in G (X) (\AA)$ and $\zeta\in\mu_2$. There is also a local version.

As we integrate over automorphic quotients often, we write $[H]$ for $H (F)\lmod H (\AA)$ if $H$ is an algebraic group
and $[\til {H}]$ for $H (F)\lmod \til {H} (\AA)$ if $\til {H} (\AA)$ covers a subgroup $H (\AA)$ of a metaplectic group.

Let $\cA_\cusp (\til {G} (X))$ denote the set of all irreducible genuine cuspidal automorphic representations of $\til
{G} (X) (\AA)$. We note that there is no algebraic group $\til {G} (X)$ and that it is used purely for aesthetic reason.

\section{Arthur Parameters}
\label{sec:arthur-parameters}

We recall the description of $A$-parameters attached to genuine irreducible cuspidal automorphic representations of
metaplectic groups from \cite{MR3866889}. The $A$-parameters are defined via those attached to irreducible automorphic
representations of odd special orthogonal groups via Shimura--Waldspurger correspondence. In this section, we write $\Mp_{2n}$ rather than $\til {G} (X)$.

The global elliptic $A$-parameters $\phi$ for $\Mp_{2n}$  are of the form:
\begin{equation*}
  \phi = \boxplus_i (\tau_i,b_i)
\end{equation*}
where
\begin{itemize}
\item $\tau_i$ is an irreducible self-dual cuspidal automorphic representation of $\GL_{n_i} (\AA)$;
\item $b_i$ is a positive integer which represents the unique $b_i$-dimensional irreducible representation of $\SL_2 (\CC)$
\end{itemize}
such that
\begin{itemize}
\item $\sum_{i} n_ib_i = 2n$;
\item if $b_i$ is odd, then $\tau_i$ is symplectic, that is, $L (s,\tau_i,\wedge^2)$ has a pole at $s=1$;
\item if $b_i$ is even, then $\tau_i$ is orthogonal, that is, $L (s,\tau_i,\Sym^2)$ has a pole at $s=1$;
\item the factors $(\tau_i,b_i)$ are pairwise distinct.
\end{itemize}

Given a global elliptic  $A$-parameter $\phi$ for $\Mp_{2n}$, for each place $v$ of $F$, we can attach a local $A$-parameter
\begin{equation*}
  \phi_v = \boxplus_i (\phi_{i,v},b_i)
\end{equation*}
where $\phi_{i,v}$ is the $L$-parameter of $\tau_{i,v}$ given by the local Langlands
correspondence \cite{MR1011897,MR1876802,MR1738446,MR3049932} for $\GL_{n_i}$. We can write the local $A$-parameter
 $\phi_v$ as a homomorphism
\begin{equation*}
  \phi_v: L_{F_v} \times \SL_2 (\CC) \rightarrow \Sp_{2n} (\CC)
\end{equation*}
where $L_{F_v}$ is the local Langlands group, which is the Weil--Deligne group of $F_v$ for $v$ non-archimedean and is
the Weil group of $F_v$ for $v$ archimedean.
 We associate to it the $L$-parameter
\begin{align*}
  \varphi_{\phi_v}: L_{F_v}  \rightarrow \Sp_{2n} (\CC)\\
  w \mapsto \phi_v (w,
  \begin{pmatrix}
    |w|^{\half} & \\ & |w|^{-\half}
  \end{pmatrix}
).
\end{align*}

Let $L^2 (\Mp_{2n})$ denote the subspace of $L^2 (\Sp_{2n} (F) \lmod \Mp_{2n} (\AA))$ on which $\mu_2$ acts as the
non-trivial character. Let $L^2_{\disc} (\Mp_{2n})$ denote the subspace of $L^2 (\Mp_{2n})$ that is the direct sum of
all irreducible sub-representations of $L^2 (\Mp_{2n})$ under the action of $\Mp_{2n} (\AA)$. Let
$L^2_{\phi,\psi} (\Mp_{2n})$ denote subspace of $L_{\disc}^2 (\Mp_{2n})$ generated by the full near equivalence class of
$\pi \subset L_{\disc}^2 (\Mp_{2n})$ such that the $L$-parameter of $\pi_v$ with respect to $\psi$ is $\varphi_{\phi_v}$
for almost all $v$. We note here that for a genuine irreducible automorphic representation of the metaplectic group, its
$L$-parameter depends on the choice of an additive character $\psi$. If one changes $\psi$ to $\psi_a := \psi (a\cdot)$, the $L$-parameter
changes in the way prescribed in \cite [Remark~1.3] {MR3866889}. See also \cite{gan:_repres_metap_group_i}. Then we have
the decomposition:

\begin{thm} [{\cite[Theorem~1.1]{MR3866889}}]
  \begin{equation*}
  L^2_{\disc} (\Mp_{2n}) = \oplus_{\phi} L^2_{\phi,\psi} (\Mp_{2n}).
\end{equation*}
where $\phi$ runs over global elliptic $A$-parameters for $\Mp_{2n}$.
\end{thm}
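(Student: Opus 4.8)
The plan is to deduce this decomposition, as Gan and Ichino do, from Arthur's endoscopic classification of the discrete automorphic spectrum of odd special orthogonal groups, transported across the theta correspondence attached to the fixed additive character $\psi$.

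First I would record the two inputs I am allowed to cite. On the one hand, Arthur's theorem gives, for each quadratic space $V$ of odd dimension, a decomposition $L^2_{\disc}(\SO(V)) = \oplus_{\phi'} L^2_{\phi'}(\SO(V))$ indexed by elliptic $A$-parameters valued in $\Sp_{\dim V-1}(\CC) = \widehat{\SO(V)}$, together with the local $A$-packets $\Pi_{\phi'_v}$ and the multiplicity formula governed by the sign characters of the component group $S_{\phi'}$. On the other hand, I need the local theta correspondence for the pairs $(\Mp_{2n}(F_v),\SO(V_v))$ in a form compatible with these local packets: using the local Langlands correspondence for the $\GL_{n_i}$'s and the local doubling zeta integrals, together with the Gan--Savin description of local theta, one matches $\Pi_{\phi_v,\psi}$ with (a piece of) $\Pi_{\phi'_v}$, where $\phi'_v$ differs from $\phi_v$ by an elementary operation of adding or deleting a $(\chi_{V_v},1)$-block, the precise recipe being forced at the unramified places by the unramified theta correspondence.

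The global heart of the argument is to promote theta lifting to a spectrum- and near-equivalence-preserving dictionary. Given $\sigma\in\cA_\cusp(\til G(X))$, the Rallis tower property gives a first occurrence in each Witt tower of odd orthogonal spaces; the regularised Siegel--Weil formula and the Rallis inner product formula (Kudla--Rallis, Gan--Qiu--Takeda, Yamana) express the Petersson norm of the global theta lift $\theta_\psi^V(\sigma)$ in terms of values and residues of the doubling $L$-function $L_\psi(s,\sigma\times\chi_V)$, so that the lift is cuspidal and nonzero at first occurrence; combined with the Kudla--Rallis conservation relation this forces $\sigma$ to have a nonzero cuspidal theta lift $\pi$ to some $\SO(V)$ with $\dim V\le \dim X+1$. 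Comparing Satake parameters at almost all places identifies the $A$-parameter $\phi_\psi(\sigma)$ of Sec.~\ref{sec:arthur-parameters} with the image, under the operation on $A$-parameters dictated by the theta correspondence, of the genuine elliptic $A$-parameter $\phi'(\pi)$ supplied by Arthur; this simultaneously shows that $\phi_\psi(\sigma)$ really is an elliptic $A$-parameter for $\Mp_{2n}$ (the self-duality and parity conditions of Sec.~\ref{sec:arthur-parameters} being inherited from the $\SO$ side) and that $\sigma$ lies in $L^2_{\phi,\psi}(\Mp_{2n})$ for $\phi=\phi_\psi(\sigma)$. Running theta in the reverse direction --- the back-and-forth lift, whose non-vanishing is again controlled by an inner product formula, now for an $\SO$-side $L$-function --- shows conversely that every elliptic $\phi$ for $\Mp_{2n}$ is realised. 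Distinctness of the summands is then immediate from strong multiplicity one for $\GL$ applied to the constituents $\tau_i$: inequivalent parameters produce non-near-equivalent families, so the sum is direct.

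It remains to pass from the cuspidal spectrum to all of $L^2_{\disc}(\Mp_{2n})$: residual discrete representations are handled by the same machinery applied to residues of Eisenstein series, whose theta lifts land in the residual spectrum of $\SO(V)$ classified by Arthur, with the multiplicities again read off through the local packet matching. The step I expect to be the main obstacle is exactly this transport of the \emph{full} classification --- not merely the index set of parameters but the packet structure and the multiplicity formula --- across the theta correspondence: it requires at once the analytic control of global theta lifts via the regularised Rallis inner product formula together with the conservation relation, so that first occurrence lands in precisely the right orthogonal group, and the local assertion that theta carries Arthur packets to Arthur packets compatibly with the sign characters on the component groups; and throughout one must track the dependence on $\psi$, which enters through the Weil representation and, as noted in the excerpt, interacts nontrivially with passage to contragredients in the metaplectic setting.
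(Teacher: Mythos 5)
This statement is not proved in the paper at all: it is quoted verbatim from Gan--Ichino \cite{MR3866889} (their Theorem~1.1), and the paper uses it as a black box, so there is no internal proof against which to check you. Your outline is, at the level of strategy, faithful to the cited source --- transporting Arthur's endoscopic classification for odd special orthogonal groups to $\Mp_{2n}$ through the $\psi$-theta correspondence, with the regularised Rallis inner product formula and comparison of Satake parameters at almost all places providing the near-equivalence identification, and strong multiplicity one for $\GL_n$ giving directness of the sum. Two caveats are worth recording. First, your step ``the conservation relation forces a nonzero cuspidal theta lift to some $\SO(V)$ with $\dim V\le \dim X+1$'' is too strong: the equal-rank (and even the $\dim X+1$-bounded) lifts can all vanish, as this very paper acknowledges by allowing $\LO_{\psi,\chi}(\sigma)>\dim X+2$ in Theorems~\ref{thm:chi-b-factor-LO} and \ref{thm:strengthend-eis-pole-2-LO}; Gan--Ichino instead work with lifts to larger odd orthogonal groups (where nonvanishing can be guaranteed) and then peel off the extra $(\chi,b)$-type contribution when defining the parameter, so an argument pinned to $\dim V\le\dim X+1$ would fail as stated. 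Second, the ``main obstacle'' you identify --- carrying the full packet structure and multiplicity formula across theta --- is needed for the finer results of \cite{MR3866889} (their multiplicity formula), not for the bare near-equivalence decomposition asserted here, which only requires control of parameters almost everywhere; conflating the two makes the proposal look more demanding than the statement being proved. With those adjustments your sketch is a reasonable summary of the cited proof, but it remains a plan deferring to the same external machinery rather than an independent argument.
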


Thus we see that the $A$-parameter of $\sigma\in\cA_\cusp (\Mp_{2n})$ has the factor  $(\chi,b)$ where $\chi$
is a quadratic automorphic character of $\GL_1 (\AA)$ and $b$ maximal among all factors if
and only if the partial $L$-function $L^S_\psi (s,\sigma\times\chi)$ has its rightmost pole at $s= (b+1)/2$. The
additive character $\psi$ used in the definition of the $L$-function is required to agree with the one used in defining
the $A$-parameter of $\sigma$.

\begin{rmk}
  We note that the partial $L$-function $L^S_\psi (s,\sigma\times\chi)$ is as defined in \cite{MR3006697} and \cite[Sec.~6]{MR3006697} shows that at each unramified place $v$, the local $L$-factor is equal to $L_v(s,\BC_\psi(\sigma_v)\otimes\chi_v)$ where $\BC_\psi$ denotes the base change of $\sigma_v$ to $\GL_{2n}(F_v)$ with respect to $\psi_v$. The $L$-function is $L^S_{\psi^{-1}}(s,\sigma^\vee\otimes\chi)$ in the notation of \cite{MR3211043}. Note that the degenerate principal series $I_\psi(s,\chi)$ defined in \cite[Sec.~3.1]{MR3211043} is $I_{\bar{P}(W^\Delta),\psi^{-1}}(s,\chi)$ in the notation of \cite[Sec.~3]{MR3006697} due to the use of right action in \cite{MR3211043} and this accounts for the of use of $\psi^{-1}$, but then by \cite[Prop.~5.4]{MR3211043}, it is equal to $L^S_\psi(s,\sigma\times\chi)$.
\end{rmk}

\section{Eisenstein series}
\label{sec:eisenstein-series}

For $\sigma\in\cA_\cusp (\Mp_{2n})$ and a quadratic automorphic character $\chi$ of $\AA^\times$, the $L$-function
$L_\psi (s,\sigma\times\chi)$ appears in the constant term of the Eisenstein series on $\til {G} (X_a) (\AA)$ attached to the
maximal parabolic subgroup $Q_a$. We study the poles of these Eisenstein series
in this section.

Let $\fa_{M_a}^* = \Rat (M_a)\otimes_\ZZ \RR$ and $\fa_{M_a,\CC}^*= \Rat (M_a)\otimes_\ZZ \CC $ where $\Rat (M_a)$
denotes the group of rational characters of $M_a$. Let $\fa_{M_a} = \Hom_\ZZ (\Rat (M_a),\RR)$.  As $Q_a$ is a maximal
parabolic subgroup, $\fa_{M_a}^*\isom \RR$ and we identify $\fa_{M_a,\CC}^*$ with $\CC$ via
$s\mapsto s (\frac{\dim X +a +1}{2})^{-1}\rho_{Q_a}$, following \cite{MR2683009}, where $\rho_{Q_a}$ is the half sum of the
positive roots in $N_a$. Thus we may regard $\rho_{Q_a}$ as the number $(\dim X +a +1) /2$. In fact, it is clear that $\rho_{Q_a} (m (x,h)) = |\det x|_\A^{(\dim X +a +1) /2}$ for $m (x,h)\in M_a (\AA)$.

Let $H_a$ be the homomorphism $M_a (\AA)\rightarrow \fa_{M_a}$ such that for $m\in M_a (\AA)$ and $\xi\in \fa_{M_a}^*$
we have $\exp (\form{H_a (m)}{\xi}) = \prod_v |\xi (m_v)|_v$. We may think of $H_a (m (x,h)) \in \fa_{M_a}\isom \RR$ as
$\log (|\det x|_\AA)$. We extend $H_a$ to $G (X_a) (\AA)$ via the Iwasawa decomposition and then to
$\til {G} (X_a) (\AA)$ via projection.

Let $\chi_\psi$ be the genuine character of $\til {\GL}_1 (F_v)$ defined by
\begin{equation*}
  \chi_\psi ((g,\zeta))=\zeta \gamma (g, \psi_{1/2})^{-1}
\end{equation*}
where $\gamma (\cdot,\psi_{1/2})$, valued in the 4-th roots of unity, is defined via the Weil index. We note that the notation here agrees with that in \cite [page~521] {MR3166215}. Then via the
determinant map, we get a genuine character of $\til {\GL}_a (F_v)$ which we also denote by $\chi_\psi$. 

Let $\sigma$ be a genuine cuspidal automorphic representation of $\til {G} (X) (\AA)$. Let $\chi$ be a quadratic  automorphic
character of $\GL_a (\AA)$. Then $\chi\chi_\psi:(g,\zeta)\mapsto \chi (g)\chi_\psi ((g,\zeta))$ gives a genuine
automorphic character of $\til {\GL}_a (\AA)$. Let $\cA_{a,\psi} (s,\chi,\sigma)$ denote the space of $\CC$-valued
smooth functions $f$ on $N_a (\AA)M_a (F)\lmod \til {G} (X_a) (\AA)$ that satisfy the following properties:
\begin{enumerate}
\item $f$ is right $\til {K}_a$-finite;
\item for any $(x,\zeta)\in \til {\GL}_a (\AA)$ and $g\in \til {G} (X_a) (\AA)$ we have
  \begin{equation*}
    f (\til {m} (x,I_X,\zeta) g) =\chi\chi_\psi ((x,\zeta))|\det (x)|_\AA^{s+\rho_{Q_a}} f (g);
  \end{equation*}
\item for any fixed $k\in \til {K}_a$, the function $h\mapsto f (\til {m} (I_a,h,\zeta)k)$ on $\til {G} (X) (\AA)$ is a smooth right
  $\til {K}_{ G (X)}$-finite vector in the space of $\sigma$.
\end{enumerate}
Let $f\in\cA_{a,\psi} (0,\chi,\sigma)$ and $s\in \CC$. Since the metaplectic group splits over $G (X_a) (F)$, we can form the Eisenstein series on the metaplectic group as
in the case of non-cover groups:
\begin{equation*}
  E (g,s,f) = E (g,f_s)=\sum_{\gamma\in Q_a (F)\lmod G (X_a) (F)} f_s (\gamma g)
\end{equation*}
where $f_s (g) = \exp (\form{H_a (g)}{s})f (g)\in \cA_{a,\psi} (s,\chi,\sigma)$. 
By the general theory of Eisenstein series \cite[IV.1]{MR1361168}, it is absolute convergent for $\Re s > \rho_{Q_a}$ and has meromorphic
continuation to the whole $s$-plane with finitely many poles in the half plane $\Re s >0$, which are all real as we identify $\fa_{M_a}^*$ with $\RR$.

Let $\cP_{a,\psi} (\chi,\sigma)$ denote the set of positive poles of $E (g,s,f)$ for $f$ running over
$\cA_{a,\psi} (0,\chi,\sigma)$.
\begin{prop}\label{prop:P1-max-pole-2-Pa-max-pole}
  Assume that $\cP_{1,\psi} (\chi,\sigma)$ is non-empty and let $s_0$ be its maximal member. Then for all integers $a\ge
  1$, $s=s_0 + \frac{1}{2} (a-1)$ lies in $\cP_{a,\psi} (\chi,\sigma)$ and is its maximal member.
\end{prop}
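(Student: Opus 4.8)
The plan is to relate the Eisenstein series on $\til G(X_a)$ to the one on $\til G(X_1)$ by a standard parabolic-induction-in-stages argument, reducing the computation of the maximal pole for general $a$ to the case $a=1$ together with an understanding of how the "extra" degenerate principal series contributes. First I would set up the inclusion of parabolics: inside $\til G(X_a)$ we have the parabolic $Q_a$ with Levi $\til M_a \cong \til{\GL}_a \times \til G(X)$, and we may factor the induction through the parabolic with Levi $\til{\GL}_{a-1}\times \til G(X_1)$. Concretely, the section $f_s\in\cA_{a,\psi}(s,\chi,\sigma)$ is built by inducing from $\chi\chi_\psi|\cdot|^{s+\rho_{Q_a}}\otimes\sigma$, and one rewrites $E(g,f_s)$ on $\til G(X_a)$ as an Eisenstein series whose inducing data on the smaller group $\til G(X_1)$ already involves the Eisenstein series $E(\cdot, f'_{s'})$ on $\til G(X_1)$ attached to $\chi\otimes\sigma$. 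The shift in the spectral parameter comes from comparing $\rho_{Q_a}=(\dim X + a+1)/2$ with $\rho_{Q_1}=(\dim X+2)/2$: peeling off $a-1$ hyperbolic planes from a $\GL_1$-type induction shifts the base point by $\tfrac12(a-1)$, which is exactly the claimed relation $s = s_0 + \tfrac12(a-1)$.

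Next I would argue the two inequalities separately. For "$s_0+\tfrac12(a-1)$ is a pole", one chooses the section $f$ on $\til G(X_a)$ so that its restriction to the $\til G(X_1)$-factor is (near) the section realizing the pole at $s_0$ in $\cP_{1,\psi}(\chi,\sigma)$, and a $\GL$-type section on the remaining $\til{\GL}_{a-1}$-factor placed at the edge of convergence; the iterated Eisenstein series / residue then manifestly has a pole at the shifted point, using that taking residues commutes with the outer summation over the smaller flag variety (convergence of the outer sum after the residue is taken, in the relevant range, is the usual Langlands argument). For "it is the maximal member", one invokes the constant-term computation: the constant term of $E(g,f_s)$ along $Q_a$ is a sum of intertwining operators applied to $f_s$, and its poles in $\Re s>0$ are controlled by poles of the normalizing $L$-factors — here the completed $L$-function $L_\psi(s,\sigma\times\chi)$ and abelian $\GL_1$ $L$-factors (for $\chi^2$, which is trivial since $\chi$ is quadratic, contributing only zeta factors) — together with poles already visible on $\til G(X_1)$. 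Since by the discussion at the end of Section~\ref{sec:arthur-parameters} the rightmost pole of $L^S_\psi(s,\sigma\times\chi)$ corresponds to the maximal element of $\cP_{1,\psi}(\chi,\sigma)$ via $s_0$, and the extra abelian factors introduce no pole to the right of the shifted point, one concludes no pole of $E(g,f_s)$ on $\til G(X_a)$ can exceed $s_0+\tfrac12(a-1)$.

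The main obstacle I anticipate is the bookkeeping in the metaplectic/genuine setting: the cocycle on $\til{\GL}_a$ carries the Hilbert-symbol twist, and the genuine character $\chi\chi_\psi$ does not simply restrict to $\chi\chi_\psi$ on the $\til{\GL}_{a-1}$ and $\til G(X_1)$ pieces without Weil-index correction factors (this is precisely the subtlety flagged in the introduction about contragredients and the role of $\psi$). So I would be careful to track the $\gamma(\cdot,\psi_{1/2})$ factors through the stages of induction and to verify that the normalization by $\rho_{Q_a}$ chosen in Section~\ref{sec:eisenstein-series} is the one that makes the shift come out to exactly $\tfrac12(a-1)$; once the correct identification of inducing data on $\til G(X_1)$ is pinned down, the pole comparison is formally the same as in the symplectic case treated in \cite{MR3805648}, and a short indication of this should suffice. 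A secondary, milder point is to ensure that "maximal" is genuinely attained and not merely a supremum — but since there are only finitely many poles in $\Re s>0$ and the pole at the shifted point is produced by an explicit section, this is automatic.
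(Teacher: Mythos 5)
Your overall strategy (induction in stages through the Levi $\GL_{a-1}\times G(X_1)$ plus constant-term analysis) is the same family of argument that the paper itself invokes: its proof consists of the remark that the constant-term proofs of M\oe glin \cite{MR1473165} and of \cite{MR3435720} carry over verbatim because the cover splits over unipotent radicals, with $\chi$ replaced by $\chi\chi_\psi$. Your existence half is right in outline, although ``manifestly has a pole'' hides the one genuine point there, namely that the polar term cannot be cancelled by the other terms of the constant term; that non-cancellation is what the cited proofs actually verify.

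The maximality half, however, has a real gap. You bound the poles of the constant term by poles of normalizing $L$-factors and then assert that ``the rightmost pole of $L^S_\psi(s,\sigma\times\chi)$ corresponds to the maximal element of $\cP_{1,\psi}(\chi,\sigma)$,'' citing the discussion at the end of Sec.~\ref{sec:arthur-parameters}. That discussion only relates $(\chi,b)$-factors of the $A$-parameter to poles of the partial $L$-function; it says nothing about $\cP_{1,\psi}(\chi,\sigma)$, and the identification you need is neither proved in the paper nor true in general: by the second case of Prop.~\ref{prop:L-pole-2-Eis-pole}, $E(g,f_s)$ on $\til{G}(X_1)$ can acquire a pole at $s=\frac{1}{2}$ from the zeta factor when $L^S_\psi(s,\sigma\times\chi)$ is holomorphic for $\Re s\ge\frac{1}{2}$ and merely non-vanishing at $\frac{1}{2}$, so the maximal member of $\cP_{1,\psi}(\chi,\sigma)$ need not be a pole of the partial $L$-function at all. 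Relatedly, poles of the full intertwining operators are not controlled by partial (or completed) $L$-factor ratios alone: ramified and archimedean local factors enter, and the numerator $\zeta^S(2s-(a-1))$ a priori allows a pole at $s=a/2$, which lies strictly to the right of $s_0+\frac{1}{2}(a-1)$ unless one already knows $s_0\ge\frac{1}{2}$ --- and you cannot import that from Prop.~\ref{prop:possible-loc-of-poles}, since that proposition is proved using the present one. The correct route, as in the cited proofs, is to bound the poles of $E^{Q_a}$ directly through the inductive structure of its constant terms in terms of the lower-rank Eisenstein series (ultimately the one on $\til{G}(X_1)$), rather than through the $L$-function; once set up that way, your reduction-in-stages bookkeeping, including the $\chi\chi_\psi$ and Weil-index issues you flag, is indeed the only metaplectic-specific point, exactly as the paper says.
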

\begin{proof}
  We remark how to carry over the proofs in \cite {MR1473165} which treated orthogonal/symplectic case and
  \cite{MR3435720} which treated the unitary case. The proofs relied on studying constant terms of the Eisenstein
  series, which are  integrals over unipotent groups. Since the metaplectic group splits over the unipotent groups,
  the proofs carry over. We just need to replace occurrences of $\chi$ in \cite [Prop.~2.1] {MR3435720} with the genuine
  character $\chi\chi_\psi$.
\end{proof}
\begin{prop}\label{prop:L-pole-2-Eis-pole}
   Let $S$ be the set of places of $F$ that contains the archimedean places and outside of which
  $\psi$, $\chi$ and $\sigma$ are unramified. Assume one of the following.
  \begin{itemize}
  \item The partial $L$-function $L^S_\psi (s,\sigma\times\chi)$ has a pole at
    $s=s_0 >\frac{1}{2}$ and that it is holomorphic for $\Re s>s_0$.
  \item The partial $L$-function $L^S_\psi (s,\sigma\times\chi)$ is non-vanishing at
    $s=s_0=\frac{1}{2}$ and is holomorphic for $\Re s > \frac{1}{2}$.
  \end{itemize}
Then for all integers $a\ge 1$, $s=s_0 +\frac{1}{2} (a-1) \in \cP_{a,\psi} (\sigma,\chi)$.
\end{prop}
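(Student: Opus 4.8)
The plan is to reduce the statement about the partial $L$-function to the statement about Eisenstein series poles via the \textbf{constant term computation}, then invoke Proposition~\ref{prop:P1-max-pole-2-Pa-max-pole} to pass from $a=1$ to all $a\ge 1$. First I would compute the constant term $E_{Q_a}(g,f_s)$ of the Eisenstein series along its own parabolic $Q_a$. By the standard Langlands formula (as in \cite[II.1]{MR1361168}), this constant term is a sum of two terms: the ``leading'' term $f_s$ itself, and the ``intertwined'' term $M(s)f_s$, where $M(s)$ is the standard intertwining operator associated with the unique non-trivial Weyl element in the relative Weyl group of $M_a$. The key point is the Gindikin--Karpelevich--type factorization: the unramified part of $M(s)$ is a ratio of the completed tensor-product $L$-function and related $L$-functions of $\sigma\times\chi$, twisted appropriately by $\chi_\psi$ to account for the metaplectic cover. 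Concretely, the normalizing factor is built from $L^S_\psi(s,\sigma\times\chi)$ in the numerator (shifted so that the pole at $s=s_0$ of the $L$-function becomes a pole of $M(s)$) together with the Siegel-type zeta factors $\xi^S(2s)$ or $L^S(2s,\chi^2)$ in the denominator coming from the $\GL_1$-part.

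The main technical content is therefore this constant-term / intertwining-operator identity in the metaplectic setting: I would cite or adapt the doubling-method and Langlands--Shahidi-style computation for $\Mp_{2n}$ — as developed in \cite{MR3006697,MR3166215} and used in \cite{MR3866889} — being careful that, as the introduction stresses, the local factors behave differently under contragredient for covering groups, so the genuine character $\chi\chi_\psi$ must be threaded through consistently. Once the factorization of $M(s)$ is in hand, the argument is: under the first hypothesis, $L^S_\psi(s,\sigma\times\chi)$ has a pole at $s=s_0>\tfrac12$ and is holomorphic to the right, while the denominator $L^S(2s,\chi^2)$ is non-zero and finite at $s=s_0$ (since $2s_0>1$, we are to the right of the possible pole/zero of that Dirichlet-type $L$-function, using that $\chi$ is quadratic so $\chi^2$ is trivial and $L^S(2s,\mathbf 1)=\zeta^S(2s)$ has only its pole at $2s=1$); hence $M(s)$ has a pole at $s=s_0$. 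One then chooses a decomposable section $f_s$ whose ramified local components are selected so that the corresponding ramified local intertwining integrals are holomorphic and non-zero at $s_0$ — this is possible by the usual argument that local intertwining operators can be made non-vanishing by a suitable choice of local data — so that $M(s)f_s$ genuinely has a pole at $s_0$. Since the other term $f_s$ is entire in $s$, the constant term $E_{Q_a}(g,f_s)$ has a pole at $s_0$, and therefore so does $E(g,f_s)$ itself; thus $s_0\in\cP_{1,\psi}(\chi,\sigma)$.

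Under the second hypothesis, $s_0=\tfrac12$, so the shifted $L$-function in the numerator is being evaluated where $L^S_\psi(s,\sigma\times\chi)$ is merely \emph{non-vanishing} and holomorphic; here the pole of $E(g,f_s)$ at $s=\tfrac12$ instead comes from the denominator factor $\zeta^S(2s)$, whose pole at $2s=1$ is \emph{not} cancelled precisely because the numerator does not vanish there. Again one arranges the ramified components of $f_s$ to be holomorphic and non-zero at $s=\tfrac12$, and reads off the pole from the constant term. Finally, having established $s_0\in\cP_{1,\psi}(\chi,\sigma)$ in either case, Proposition~\ref{prop:P1-max-pole-2-Pa-max-pole} immediately gives $s_0+\tfrac12(a-1)\in\cP_{a,\psi}(\sigma,\chi)$ for every integer $a\ge 1$, which is the assertion. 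I expect the genuine obstacle to be the bookkeeping of the $\chi_\psi$-twist in the Gindikin--Karpelevich factorization — verifying that the denominator is exactly a non-metaplectic Dirichlet $L$-function $L^S(2s,\chi^2)$ with no extra Weil-index contribution that could shift or create cancellation — together with the choice of ramified sections realizing non-vanishing of the local intertwining operators; both are known in the literature but require care to transcribe correctly to the metaplectic cover.
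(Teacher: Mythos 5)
Your main engine (compute the constant term along $Q_a$, use the Gindikin--Karpelevich formula for the covering group to express the unramified part of the intertwining operator as a ratio of $L^S_\psi(\cdot,\sigma\times\chi)$ and $\zeta^S$ factors, choose ramified data so the local intertwining operators are non-zero, and read off the pole from $L^S_\psi$ in case one and from $\zeta^S(2s)$ in case two) is exactly the paper's argument. The gap is in your reduction to $a=1$: you then invoke Prop.~\ref{prop:P1-max-pole-2-Pa-max-pole} to pass to general $a$, but that proposition only transfers the \emph{maximal} member of $\cP_{1,\psi}(\chi,\sigma)$ to the maximal member of $\cP_{a,\psi}(\chi,\sigma)$. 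Your hypotheses control the partial $L$-function to the right of $s_0$, not the Eisenstein series: poles of $E(g,s,f)$ with $\Re s>s_0$ could a priori arise from zeros of the denominator $L$-functions in the constant term or from poles of the local intertwining operators at the ramified places, so you have not shown that $s_0$ is the maximal element of $\cP_{1,\psi}(\chi,\sigma)$. Indeed, the paper deliberately does not identify the maximal pole of $L^S_\psi$ with the maximal member of $\cP_{1,\psi}$ (in the theorem following Thm.~\ref{thm:Eis-pole-2-bound-on-LO} it keeps two indices $j_0'\le j_0$ for precisely this reason), so the bootstrap step as stated does not go through for a non-maximal $s_0$.

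The fix is what the paper does: run your constant-term computation directly on $E^{Q_a}$ for every $a\ge 1$, not just $a=1$. The Gindikin--Karpelevich ratio for general $a$ simplifies to
\begin{equation*}
  \frac{L^S_\psi (s-\half (a-1), \sigma\times\chi)}{L^S_\psi (s+\half (a+1),\sigma\times\chi)}
  \cdot \prod_{1\le  j \le a}\frac{\zeta^S (2s- (a-1) + j-1)}{\zeta^S (2s- (a-1) + 2j -1)},
\end{equation*}
so under the first hypothesis the factor $L^S_\psi (s-\half (a-1), \sigma\times\chi)$ has a pole at $s=s_0+\half(a-1)$, and under the second hypothesis the factor $\zeta^S(2s-(a-1))$ does, and in each case one checks it is not cancelled by the remaining factors; one must also note that for $a\ge 2$ the constant term along $Q_a$ has contributions from shorter Weyl elements, which cannot cancel this pole, and that the ramified local intertwining operators are non-zero. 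With this direct argument for each $a$ (which is no harder than your $a=1$ case), the appeal to Prop.~\ref{prop:P1-max-pole-2-Pa-max-pole} becomes unnecessary and the gap disappears.
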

\begin{proof}
  By Langlands' theory of Eisenstein series, the poles are determined by those of its constant
  terms.  In our case, this amounts to studying the poles of the intertwining operator $M (w,s)$
  attached to the longest Weyl element $w$ in the Bruhat decomposition of $Q_a (F) \lmod G (X_a) (F) /
  Q_a (F)$. Those attached to shorter Weyl elements will not be able to cancel out the pole at
  $s=s_0 +\half (a-1)$. For $v\not\in S$, the local intertwining operator $M_v (w,s)$ sends the normalised spherical vector of the local component at $v$ of $\cA_{a,\psi}(s,\chi,\sigma)$
  to  a multiple of the normalised spherical vector in another induced representation.  The
  Gindikin-Karpelevich formula extended to the  Brylinski-Deligne extensions by \cite{MR3749191} shows that the
  ratio is
  \begin{equation*}
    \prod_{1\le j \le a} \frac{L^S_\psi (s-\half (a-1)+j-1, \sigma\times\chi)}{L^S_\psi (s-\half (a-1)+j,
      \sigma\times\chi)}
    \cdot \prod_{1\le i \le j \le a}\frac{\zeta^S (2s- (a-1) +i + j-2)}{\zeta^S (2s- (a-1) +i + j-1)}
  \end{equation*}
  where $\zeta^S$ is the partial Dedekind zeta function. A little simplification shows that it is
  equal to $I_1\cdot I_2$ with
  \begin{equation*}
    I_1 = \frac{L^S_\psi (s-\half (a-1), \sigma\times\chi)}{L^S_\psi (s+\half (a+1),
      \sigma\times\chi)},
    \qquad
    I_2 = \prod_{1\le  j \le a}\frac{\zeta^S (2s- (a-1) + j-1)}{\zeta^S (2s- (a-1) + 2j -1)}.
  \end{equation*}
  If we assume the first condition, then the numerator of $I_1$ has a pole at $s=s_0 + \half (a-1)$ which cannot be cancelled
  out by other terms. If we assume the second condition, then $\zeta^S (2s- (a-1))$ has a pole $s=s_0 + \half (a-1)$ which
  cannot be cancelled out by other terms. Also note that at ramified places the local intertwining
  operators are non-zero. Thus we have shown that $s=s_0 +\frac{1}{2} (a-1) \in \cP_{a,\psi} (\sigma,\chi)$.
\end{proof}

Next we relate the Eisenstein series $E (s,g,f) = E (g, f_s)$ to Siegel Eisenstein series on the `doubled group' to glean more
information on the locations of poles.

Let $X'$ be the symplectic space with the same underlying vector space as $X$ but with the negative
symplectic form $\form {\ } {\ }_{X'} = -\form {\ } {\ }_{X}$. Let $W= X \oplus X'$ and form
$W_a = \ell_a^+ \oplus W \oplus \ell_a^-$. Let $X^\Delta = \{(x,x)\in W | x\in X\}$ and
$X^\nabla = \{(x,-x)\in W | x\in X\}$. Then $W_a$ has the polarisation
$(X^\Delta \oplus \ell_a^+) \oplus (X^\nabla \oplus \ell_a^-)$. Let $P_a$ be the Siegel parabolic
subgroup of $G (W_a)$ that stabilises $X^\nabla \oplus \ell_a^-$.  For a
$\til {K}_{G (W_a)}$-finite section $\FF_s$ in
$\Ind_{\til{P}_a (\A)}^{\til {G} (W_a) (\A)}\chi\chi_\psi|\ |_\A^s$, we form the Siegel Eisenstein
series $E^{P_a} (\cdot, \FF_s )$.

Since $G (X_a)$ acts on $\ell_a^+\oplus X \oplus \ell_a^-$ and $G (X)$ acts on $X'$, we have the obvious embedding
$\iota: G (X_a) (\A) \times G (X) (\A) \rightarrow G (W_a) (\A)$ which induces a homomorphism
$\til {\iota}: \til{G} (X_a) (\A) \times_{\mu_2} \til{G} (X) (\A) \rightarrow \til{G} (W_a) (\A)$. The cocycles for the
cover groups are compatible by \cite [Theorem~4.1] {MR1197062}.

The following is the metaplectic version of  \cite [Proposition~2.3] {MR3805648} whose proof generalises
immediately. We note that the integrand is non-genuine, and thus descends to a function on $G (X) (\AA)$.
\begin{prop}\label{prop:rel-eis-siegel-eis}
  Let $\FF_s$ be a $\til {K}_{G (W_a)}$-finite section of $\Ind_{\til{P}_a (\A)}^{\til {G} (W_a) (\A)}\chi\chi_\psi|\ |_\A^s$
  and $\phi\in \sigma$. Define a function $F_{\phi,s}$ on $\til {G}(X_a) (\A)$ by
  \begin{equation}\label{eq:section-from-siegel-type}
    F_{\phi,s} (g_a) = \int_{G (X) (\A)} \FF_s (\til {\iota} (g_a,g))\phi (g) dg.
  \end{equation}
Then
\begin{enumerate}
\item It is absolutely convergent for $\Re s > (\dim X+a+1)/2$ and has meromorphic continuation to the whole $s$-plane;
\item It is a section in $\cA_{a,\psi} (s,\chi,\sigma)$;
\item The following identity holds
  \begin{equation*}
    \int_{[G (X)]} E^{P_a} (\til {\iota} (g_a,g), \FF_s)\phi (g) dg = E (g_a,F_{\phi,s}).
  \end{equation*}
\end{enumerate}
\end{prop}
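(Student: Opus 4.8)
The plan is to prove the three assertions by reducing everything to the corresponding statements for the doubled group, exactly as in the symplectic case of \cite{MR3805648}, and then to check that the metaplectic cocycle twists go through harmlessly. First I would establish (1), the convergence of the integral defining $F_{\phi,s}$ in \eqref{eq:section-from-siegel-type}. The Siegel section $\FF_s$, restricted via $\til\iota$ to $\til G(X_a)(\A)\times_{\mu_2}\til G(X)(\A)$ and then to the second factor, is (up to the genuine twist $\chi\chi_\psi$, which is unitary) a section of a degenerate principal series whose restriction to $\til G(X)(\A)$ is governed by the exponent $s$ measured against $\rho_{P_a}$. A standard majorisation of $|\FF_s|$ by a positive zonal spherical function together with the rapid decay of the cusp form $\phi$ on $[G(X)]$ — here one uses that $\phi$ is genuine but the product $\FF_s(\til\iota(g_a,g))\phi(g)$ is \emph{non}-genuine, hence descends to $G(X)(\A)$, so the reduction theory of the linear group $G(X)$ applies directly — gives absolute convergence for $\Re s>(\dim X+a+1)/2$. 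Meromorphic continuation then follows from that of $E^{P_a}(\cdot,\FF_s)$ itself together with part (3), once (3) is proved in the region of convergence: the left-hand side of the identity in (3), being an integral of a meromorphic family of automorphic forms against a fixed cusp form over the compact-modulo-center quotient $[G(X)]$, inherits meromorphic continuation, and hence so does $F_{\phi,s}$.

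For (2), I would verify the three defining properties of $\cA_{a,\psi}(s,\chi,\sigma)$ in turn. Right $\til K_a$-finiteness of $F_{\phi,s}$ follows from right $\til K_{G(W_a)}$-finiteness of $\FF_s$ and the compatibility of maximal compacts under $\til\iota$. The key equivariance property (ii) is the computation
\begin{equation*}
  F_{\phi,s}(\til m(x,I_X,\zeta)g_a)=\chi\chi_\psi((x,\zeta))\,|\det x|_\A^{s+\rho_{Q_a}}\,F_{\phi,s}(g_a),
\end{equation*}
which comes down to tracking how $\iota$ carries the Levi element $m(x,I_X)$ of $Q_a\subset G(X_a)$ into the Siegel Levi of $P_a\subset G(W_a)$: it lands in the $\GL$-block acting on $X^\nabla\oplus\ell_a^-$ as $x$ on $\ell_a^-$ (and trivially, up to the $X^\Delta$-identification, on the $X$-part), so the modulus character $\rho_{P_a}$ pulls back to $\rho_{Q_a}$ and the character $\chi\chi_\psi|\ |^s$ of $\til P_a$ restricts to $\chi\chi_\psi|\ |^{s}$ times the correct power. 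The only metaplectic subtlety is that this block embedding must respect the cocycles, which is precisely what \cite[Theorem~4.1]{MR1197062} (invoked in the paragraph before the Proposition) guarantees; this is why the genuine factor $\chi_\psi$, rather than just $\chi$, appears. Finally property (iii) — that $h\mapsto F_{\phi,s}(\til m(I_a,h,\zeta)k)$ lies in the space of $\sigma$ — follows because, for $h\in\til G(X)(\A)$, the element $\iota(\til m(I_a,h),1)$ commutes past the $G(X)$-integration variable to produce a right translate of $\phi$ by $h$ (using that $G(X)$ acting on $X$ and on $X'$ are the two commuting factors inside $G(W_a)$), so $F_{\phi,s}(\til m(I_a,h,\zeta)k)=R(h)\big(\text{a vector in }\sigma\big)$; the $\zeta$-dependence is the genuine one, consistent with $\sigma$ being genuine.

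The identity in (3) is the heart of the matter and is proved by the standard unfolding: in the region of absolute convergence one substitutes the definition $E^{P_a}(g',\FF_s)=\sum_{\gamma\in P_a(F)\lmod G(W_a)(F)}\FF_s(\gamma g')$ into $\int_{[G(X)]}E^{P_a}(\til\iota(g_a,g),\FF_s)\phi(g)\,dg$, and analyses the $G(X)(F)$-orbits on $P_a(F)\lmod G(W_a)(F)$. The orbit decomposition here is identical to the one in \cite[Proposition~2.3]{MR3805648} — it is a purely linear-algebraic statement about the doubled symplectic space $W_a$ and is insensitive to the cover — and it has the feature that exactly one orbit (the ``open'' one, where the graph $X^\Delta$ is in good position relative to $X^\nabla\oplus\ell_a^-$) contributes: all other orbits give inner integrals of $\phi$ over unipotent radicals of proper parabolics of $G(X)$, which vanish by cuspidality. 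On the open orbit the stabiliser is computed to be (the graph of) $Q_a(F)\subset G(X_a)(F)$ embedded diagonally, so the double quotient collapses and, after interchanging the $G(X)(F)$-sum with the $[G(X)]$-integral to get an integral over all of $G(X)(\A)$ (this is the unfolding that produces precisely \eqref{eq:section-from-siegel-type}), one is left with $\sum_{\gamma\in Q_a(F)\lmod G(X_a)(F)}F_{\phi,s}(\gamma g_a)=E(g_a,F_{\phi,s})$. The main obstacle — and the only place where the metaplectic setting demands care rather than a verbatim copy — is making sure that the splitting of $\til G(W_a)$ over $G(W_a)(F)$, the splitting of $\til G(X_a)$ over $G(X_a)(F)$, and the homomorphism $\til\iota$ are all mutually compatible, so that the collapse of the open orbit on the metaplectic level is genuinely the image of the collapse downstairs; this is exactly the content of the compatibility of cocycles cited from \cite{MR1197062}, and once it is in hand the orbit computation, the cuspidality-driven vanishing, and the final resummation proceed word-for-word as in the symplectic case.
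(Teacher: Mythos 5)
Your overall strategy is the same as the paper's: the paper proves this proposition simply by asserting that the proof of \cite[Proposition~2.3]{MR3805648} generalises, the only metaplectic inputs being that the integrand $\FF_s (\til {\iota} (g_a,g))\phi (g)$ is non-genuine (hence descends to $G (X) (\AA)$) and that the cocycles are compatible by \cite[Theorem~4.1]{MR1197062}. Your treatment of (2) (the computation that $\iota (m (x,I_X),1)$ lands in the Siegel Levi with $\rho_{P_a}$ pulling back to $\rho_{Q_a}$, and the change of variables producing right translates of $\phi$ for property (iii)) and of (3) (unfolding over orbits, cuspidality killing the non-open orbits, the open-orbit stabiliser collapsing to give $E (g_a,F_{\phi,s})$, with Rao's cocycle compatibility ensuring everything lifts to the covers) is exactly that argument.

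There is, however, a genuine gap in your argument for the meromorphic continuation in (1). The left-hand side of the identity in (3) equals $E (g_a,F_{\phi,s})=\sum_{\gamma\in Q_a (F)\lmod G (X_a) (F)}F_{\phi,s} (\gamma g_a)$, not $F_{\phi,s} (g_a)$ itself, so knowing that this integral of $E^{P_a}$ against $\phi$ continues meromorphically does not by itself yield continuation of the section $F_{\phi,s}$: taking the constant term along $Q_a$ only produces $F_{\phi,s}+M (w,s)F_{\phi,s}$, which you cannot separate without an additional argument (e.g.\ separating the exponents $s+\rho_{Q_a}$ and $-s+\rho_{Q_a}$). Also, $[G (X)]$ is not compact modulo centre ($G (X)$ is a symplectic group); the convergence of that integral rests on rapid decay of the cusp form against moderate growth of the Eisenstein series. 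A standard way to repair (1) is to observe that for fixed $k$ the function $h\mapsto F_{\phi,s} (\til {m} (I_a,h,\zeta)k)$ lies in a fixed finite-dimensional space of $\til {K}_{G (X)}$-finite vectors in $\sigma$, and its coordinates are computed by pairing against vectors of $\sigma$ over $[G (X)]$; these pairings are doubling zeta integrals against matrix coefficients, whose absolute convergence for $\Re s$ large and meromorphic continuation are known (Piatetski-Shapiro--Rallis, Lapid--Rallis, and \cite{MR3006697} for the metaplectic case). With (1) established along such lines, the rest of your proposal goes through as written.
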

Now we list the locations of possible poles of the Siegel Eisenstein series obtained in \cite{MR1174424,MR1411571}.
\begin{prop}\label{prop:loc-pole-siegel-eis}
   The poles with $\Re s >0$ of $E^{P_a} (g,\FF_s)$ are at most simple and are contained in the set
  \begin{equation*}
    \Xi_{m+a,\psi} (\chi) = \{\half (\dim X+a)-j  |    j\in\ZZ, 0\le j < \half (\dim X+a)\}.
  \end{equation*}
\end{prop}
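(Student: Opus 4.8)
The plan is to reduce the statement to the known classification of poles of Siegel Eisenstein series on symplectic (metaplectic) groups, as established in \cite{MR1174424,MR1411571}, applied to the doubled group $G(W_a)$. The point is that $W_a$ is a symplectic space of dimension $2(\dim X + a) = 2(\dim X) + 2a$, so its Witt index is $m + a$ where $2m = \dim X$, and the Siegel Eisenstein series $E^{P_a}(g,\FF_s)$ attached to the Siegel parabolic $\til P_a$ with inducing character $\chi\chi_\psi|\cdot|_\A^s$ is exactly of the type treated in loc.\ cit. First I would recall that for $\GSp$ (and its metaplectic cover, via the Weil-representation-twisted character $\chi\chi_\psi$), Kudla--Rallis and Sweet determined that the normalized Siegel Eisenstein series is holomorphic and that the poles of the unnormalized series in the region $\Re s > 0$ are at most simple and occur only at the points where the relevant product of completed $L$- and $\zeta$-factors in the constant term has a pole that is not cancelled. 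Concretely, the normalizing factor is built from $\prod \zeta(2s - i + \text{const})$ together with an abelian $L$-factor $L(s+\tfrac12,\chi^2)$ or $L(s+\tfrac12,\chi)$ depending on parity conventions; since $\chi$ is quadratic, $\chi^2 = 1$ and this contributes poles of the Dedekind zeta function. Tracking the arithmetic of the shifts, the surviving poles land precisely in the arithmetic progression $\{\half(\dim X + a) - j : j \in \ZZ,\ 0 \le j < \half(\dim X+a)\}$, which is $\Xi_{m+a,\psi}(\chi)$.

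The key steps, in order, are: (i) identify $W_a$ as a symplectic space of Witt index $m+a$ and $\til P_a$ as its Siegel parabolic, so that the genuine character $\chi\chi_\psi|\cdot|_\A^s$ of $\til{\GL}_{m+a}(\A)$ is the inducing datum; (ii) invoke the constant-term computation along $\til P_a$, which by the Gindikin--Karpelevich formula for Brylinski--Deligne covers (the same formula already cited in the proof of Prop.~\ref{prop:L-pole-2-Eis-pole} from \cite{MR3749191}) expresses the ratio of spherical sections in terms of $\zeta^S$-factors only, because $\chi$ is quadratic; (iii) quote \cite{MR1174424,MR1411571} (and, for the metaplectic normalization, the analysis that appeared in the symplectic companion paper \cite{MR3805648}) to conclude that the poles in $\Re s > 0$ are at most simple and confined to the set $\Xi_{m+a,\psi}(\chi)$; (iv) check that the archimedean and ramified local factors do not introduce extra poles in $\Re s > 0$ beyond this set, and do not vanish so as to kill the listed candidate poles — this is standard for Siegel Eisenstein series. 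Since this is the metaplectic analogue of a result whose symplectic version is \cite[Prop.~2.3 and the surrounding discussion]{MR3805648}, I would note, as the author does elsewhere in the excerpt, that the argument generalizes once the inducing character is replaced by $\chi\chi_\psi$ and one uses that the metaplectic cover splits over the unipotent radical of $\til P_a$.

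The main obstacle I anticipate is bookkeeping the precise half-integer shifts and the exact form of the normalizing factor in the metaplectic setting: because passing to contragredients changes the local factors in the metaplectic case (as the author emphasizes in the introduction, following \cite{MR3006697}), one must be careful that the Gindikin--Karpelevich ratio is set up with $\chi\chi_\psi$ rather than $\chi$, and that the genuine character $\chi_\psi$ contributes only fourth-root-of-unity Weil indices that do not affect the location of poles but do affect which progression of half-integers appears. Verifying that the resulting set is exactly $\{\half(\dim X+a) - j\}$ — rather than shifted by $\half$ — requires matching conventions with \cite{MR2683009} for the identification of $\fa^*_{M_a}$ with $\RR$ and with \cite{MR3166215} for the normalization of $\chi_\psi$. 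Once those conventions are pinned down, the poles fall out of the cited classification with no further work, so I would keep this proof short and largely a pointer to \cite{MR1174424,MR1411571,MR3805648}.
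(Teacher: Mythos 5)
Your proposal takes essentially the same route as the paper, which offers no independent argument for this proposition but simply records the known classification of poles of Siegel Eisenstein series from \cite{MR1174424,MR1411571} applied to the doubled metaplectic group with inducing character $\chi\chi_\psi|\cdot|_\A^s$ — exactly the reduction you make, with your constant-term sketch serving only as commentary on those citations. One harmless slip: the Witt index of $W_a$ equals $\dim X + a$, so under your convention $2m=\dim X$ the Siegel Levi is $\GL_{2m+a}$ rather than $\GL_{m+a}$; since you state the final set in terms of $\dim X + a$, the conclusion is unaffected.
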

Combining the above propositions, we get:
\begin{prop}\label{prop:possible-loc-of-poles}
  The maximal member $s_0$ of $\cP_{1,\psi} (\chi,\sigma)$ is of the form $s_0=\half (\dim X+1) - j$ for $j\in\ZZ$ such that
  $0 \le j <\half (\dim X+1)$. 
\end{prop}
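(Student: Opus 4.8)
The plan is to combine Propositions~\ref{prop:P1-max-pole-2-Pa-max-pole}, \ref{prop:rel-eis-siegel-eis} and~\ref{prop:loc-pole-siegel-eis}: the doubling construction lets one trap the rightmost pole of the Eisenstein series attached to the cuspidal datum $\chi\otimes\sigma$ inside the explicitly known pole set $\Xi_{m+a,\psi}(\chi)$ of a Siegel Eisenstein series on the doubled group. Write $s_0$ for the maximal member of $\cP_{1,\psi}(\chi,\sigma)$, which exists by hypothesis and satisfies $s_0>0$ since $\cP_{1,\psi}(\chi,\sigma)$ consists of positive poles. By Proposition~\ref{prop:P1-max-pole-2-Pa-max-pole}, for every integer $a\ge 1$ the point $s_1:=s_0+\half(a-1)$ is the maximal member of $\cP_{a,\psi}(\chi,\sigma)$, and $s_1>0$. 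I would fix $a$ large enough that the section-realisation argument below applies without fuss (a single small value already suffices once that argument is in hand, but passing to large $a$ keeps the local input clean).

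The key step is to show $s_1\in\Xi_{m+a,\psi}(\chi)$. Since $s_1\in\cP_{a,\psi}(\chi,\sigma)$, there is a section $f\in\cA_{a,\psi}(0,\chi,\sigma)$ with $E(g_a,f_s)$ having a pole at $s=s_1$. By Langlands' theory the poles of $E(g_a,f_s)$ in $\Re s>0$ are governed by the constant term along $Q_a$, hence by the normalising factor of the intertwining operator attached to the long Weyl element — the ratio of $L$- and Dedekind zeta functions computed in the proof of Proposition~\ref{prop:L-pole-2-Eis-pole} — which is independent of the section; consequently the pole at $s_1$ is already produced by some section of the Siegel--Weil type $F_{\phi,s}$ of~\eqref{eq:section-from-siegel-type}, for suitable $\FF_s$ and $\phi\in\sigma$. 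Now apply Proposition~\ref{prop:rel-eis-siegel-eis}(3):
\[
  \int_{[G(X)]} E^{P_a}(\til{\iota}(g_a,g),\FF_s)\,\phi(g)\,dg = E(g_a,F_{\phi,s}),
\]
where the integrand is non-genuine and descends to $G(X)(\AA)$, and $\phi$ is a cusp form so the integral converges. If $s\mapsto E^{P_a}(\cdot,\FF_s)$ were holomorphic near $s_1$, the left-hand side, hence $E(g_a,F_{\phi,s})$, would be holomorphic near $s_1$ — a contradiction. So $E^{P_a}(\cdot,\FF_s)$ has a pole at $s_1$, and as $s_1>0$, Proposition~\ref{prop:loc-pole-siegel-eis} yields $s_1=\half(\dim X+a)-j$ for some $j\in\ZZ$ with $0\le j<\half(\dim X+a)$.

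It remains to unwind: $s_0 = s_1-\half(a-1) = \half(\dim X+1)-j$, which is the asserted shape with $j\in\ZZ$; the bound $j\ge 0$ comes from $\Xi_{m+a,\psi}(\chi)$, and $j<\half(\dim X+1)$ from $s_0>0$. The main obstacle is the middle step, namely ensuring that the \emph{maximal} pole of the Eisenstein series for $\chi\otimes\sigma$ is actually witnessed by an Eisenstein series built from a section of the form $F_{\phi,s}$ and does not get killed upon integrating against $\phi$ in the doubling identity; this is exactly where the non-vanishing of the local doubling zeta integrals (equivalently, that Siegel--Weil type sections suffice, together with the expected holomorphy and non-vanishing of the normalised intertwining operators at the ramified places) is used, and it is why one passes to large $a$ through Proposition~\ref{prop:P1-max-pole-2-Pa-max-pole} rather than arguing directly at $a=1$. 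The rest is bookkeeping with half-integers.
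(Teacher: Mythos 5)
Your argument is correct and takes essentially the same route as the paper: Prop.~\ref{prop:P1-max-pole-2-Pa-max-pole} to shift the maximal pole to large $a$, Prop.~\ref{prop:rel-eis-siegel-eis} to transfer that pole to the Siegel Eisenstein series, and Prop.~\ref{prop:loc-pole-siegel-eis} plus half-integer bookkeeping to conclude. The step you single out as the main obstacle --- that for $a$ large the sections $F_{\phi,s}$ of \eqref{eq:section-from-siegel-type} suffice to witness the maximal pole --- is precisely what the paper disposes of by citing \cite[Lemma~2.6]{MR3805648}, so your identification of the needed input matches the paper's proof.
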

\begin{proof}
  By Prop.~\ref{prop:P1-max-pole-2-Pa-max-pole}, $s_0 +\half (a-1)$ is the maximal member of
  $\cP_{a,\psi} (\chi,\sigma)$. To ensure that \eqref{eq:section-from-siegel-type} provides enough
  sections for poles of the Siegel Eisenstein series to manifest, we need to take $a$ large
  enough. See \cite [Lemma~2.6] {MR3805648} for details. Then by
  Prop.~\ref{prop:rel-eis-siegel-eis}, it is a pole of the Siegel Eisenstein series whose possible
  poles are determined in Prop.~\ref{prop:loc-pole-siegel-eis}. Thus $s_0$ must be of the form
  $s_0=\half (\dim X+1) - j$ for $j\in\ZZ$ such that $0 \le j <\half (\dim X+1)$.
\end{proof}

\section{First Occurrence and Lowest Occurrence of Theta Correspondence}
\label{sec:first-occurr-lowest}

The locations of poles of the Eisenstein series defined in Sec.~\ref{sec:eisenstein-series} are intimately related to
the invariants, called the lowest occurrences of theta lifts, attached to $\sigma \in \cA_\cusp (\til {G} (X))$. The
lowest occurrences are defined via the more familiar notion of first occurrences. In Sec.~\ref{sec:peri-eisenst-seri}, we will
derive a relation between periods of residues of Eisenstein series and first occurrences.

First we review briefly the definition of theta lift. For each automorphic additive character $\psi$ of $\AA$, there is
a Weil representation $\omega_{\psi}$ of the metaplectic group $\Mp_{2n} (\AA)$ unique up to isomorphism.  In our case,
$X/F$ is a symplectic space and $Y/F$ is a quadratic space. From them, we get the symplectic space $Y\otimes X$. We
consider the Weil representation of $\til {G} (Y\otimes X) (\AA)$ realised on the Schwartz space
$\cS ((Y\otimes X)^+ (\AA))$ where $(Y\otimes X)^+$ denotes any maximal isotropic subspace of $Y\otimes X$. Different
choices of maximal isotropic subspaces give different Schwartz spaces which are intertwined by Fourier transforms. Thus the
choice is not essential and sometimes we simply write $\cS_{X,Y} (\AA)$ for  $\cS ((Y\otimes X)^+(\AA))$.

Let $G (Y)$ denote the isometric group of the quadratic space $Y$. We have the obvious homomorphism
$G (X) (\AA) \times G (Y) (\AA) \rightarrow \Sp (Y\otimes X) (\AA)$. By \cite{MR1286835}, there is a homomorphism
$\til {G} (X) (\AA) \times G (Y) (\AA) \rightarrow \Mp (Y\otimes X) (\AA)$ that covers it. Thus we arrive at the
representation $\omega_{\psi,X,Y}$ of $\til {G} (X) (\AA) \times G (Y) (\AA)$ on $\cS_{X,Y} (\AA)$, which is also called the
Weil representation. The explicit formulae can be found in \cite{MR1411571}, for example.

For $\Phi\in\cS_{X,Y} (\AA)$, $g\in \til {G} (X) (\AA)$ and $h\in G (Y) (\AA)$, we form the theta series
 \begin{equation*}
   \theta_{\psi,X,Y} (g,h,\Phi) = \sum_{w\in (Y\otimes X)^+ (F)}\omega_{\psi,X,Y} (g,h)\Phi (w).
 \end{equation*}
It  is absolutely convergent and is  automorphic in both $\til {G} (X) (\AA)$ and $G (Y) (\AA)$. In
general, it is of moderate growth.

With this we define the global theta lift. Let $\sigma\in\cA_\cusp (\til {G} (X))$. For $\phi\in\sigma$ and
$\Phi\in\cS_{X,Y} (\AA)$, define
\begin{equation*}
  \theta_{\psi,X}^Y (h,\phi,\Phi) = \int_{[G (X)]}\theta_{\psi,X,Y} (g,h,\Phi)\overline {\phi (g)} dg.
\end{equation*}
The integrand is the product of two genuine functions on $\til {G} (X) (\AA)$ and hence can be regarded as a
function on $G (X) (\AA)$. 
 We define the global theta lift $\theta_{\psi,X}^Y (\sigma)$ of $\sigma$ to be the space spanned by the above
 integrals. Let $\chi_Y$ be the quadratic character of $\AA^\times$ associated to $Y$ as in \cite[(0.7)]{MR1289491}.

 \begin{defn}
   \begin{enumerate}
   \item The first occurrence index $\FO_\psi^Y (\sigma)$ of $\sigma$ in the Witt tower of $Y$ is defined to be
     \begin{equation*}
       \min_{Y'}  \{\dim Y' | \theta_{\psi , X}^{Y'} (\sigma) \neq 0  \},
     \end{equation*}
     where $Y'$ runs through all the quadratic spaces in the same Witt tower as $Y$.
   \item  For a quadratic automorphic character $\chi$ of $\AA^\times$, define the
lowest occurrence index $\LO_{\psi, \chi} (\sigma)$ with respect to $\chi$, to be
\begin{equation*}
   \min_Y\{\FO_\psi^Y (\sigma)\}
\end{equation*}
where $Y$ runs over all quadratic spaces with $\chi_Y = \chi$ and odd dimension.
\end{enumerate}

\end{defn}
With this we get  results on the lowest occurrence and the location of the maximal pole of the Eisenstein series. We will strengthen the results using periods in Sec.~\ref{sec:peri-eisenst-seri}.
\begin{thm}\label{thm:Eis-pole-2-bound-on-LO}
  Let $\sigma\in \cA_\cusp (\til {G} (X))$ and $s_0$ be the maximal member of $\cP_{1,\psi} (\chi,\sigma)$. Then
  \begin{enumerate}
  \item $s_0=\half (\dim X +1) - j$ for some $j\in\ZZ$ such that $0\le j < \half (\dim X+1)$;
  \item $\LO_{\psi,\chi} (\sigma) \le 2j+1$;
  \item $2j+1 \ge r_X$ where $r_X = \dim X/2$ is the Witt index of $X$.
  \end{enumerate}
\end{thm}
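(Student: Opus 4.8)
The plan is to bootstrap from the two bridges already established in Sections \ref{sec:eisenstein-series} and \ref{sec:first-occurr-lowest}: the passage from poles of the rank-one Eisenstein series $E(g,f_s)$ to poles of the Siegel Eisenstein series on the doubled group $G(W_a)$ (Prop.~\ref{prop:rel-eis-siegel-eis}, Prop.~\ref{prop:possible-loc-of-poles}), and the regularised Rallis inner product formula / regularised Siegel--Weil machinery of \cite{MR1289491,MR3279536} that translates non-vanishing of residues of Siegel Eisenstein series into non-vanishing of theta lifts. Part~(1) is immediate from Prop.~\ref{prop:possible-loc-of-poles}, so the real content is parts~(2) and~(3).

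For part~(2), I would argue as follows. Given the maximal pole $s_0 = \half(\dim X+1)-j$ of $\cP_{1,\psi}(\chi,\sigma)$, Prop.~\ref{prop:P1-max-pole-2-Pa-max-pole} propagates it to a pole at $s_0+\half(a-1)$ of $\cP_{a,\psi}(\chi,\sigma)$ for all $a\ge 1$, and by taking $a$ large and invoking Prop.~\ref{prop:rel-eis-siegel-eis} (together with the remark, as in \cite[Lemma~2.6]{MR3805648}, that the sections $F_{\phi,s}$ coming from \eqref{eq:section-from-siegel-type} suffice to detect the pole) this becomes a genuine pole of the Siegel Eisenstein series $E^{P_a}(\cdot,\FF_s)$ on $\til G(W_a)(\AA)$ for a section $\FF_s$ arising from the doubling integral against $\sigma$. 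Now one applies the (regularised) Siegel--Weil formula in the appropriate term range: a pole of the Siegel Eisenstein series at $s_0+\half(a-1)$ in the relevant range forces the residue to be represented by a theta integral from some quadratic space $Y$ in the Witt tower determined by $\chi$ of the right dimension, and then the (regularised) Rallis inner product formula shows $\theta_{\psi,X}^{Y}(\sigma)\ne 0$ for a $Y$ with $\dim Y = 2j+1$ and $\chi_Y=\chi$. By the definition of $\LO_{\psi,\chi}(\sigma)$ as the minimum of the first occurrence indices over such $Y$, this yields $\LO_{\psi,\chi}(\sigma)\le 2j+1$. The one point needing care is matching the numerology: one must check that the normalisation identifying $\fa_{M_a}^*$ with $\RR$ via $s\mapsto s(\frac{\dim X+a+1}{2})^{-1}\rho_{Q_a}$, together with the shift by $\chi_\psi$ in the metaplectic induced representation, produces exactly the point $\half(\dim X+a)-j$ in the set $\Xi_{m+a,\psi}(\chi)$ of Prop.~\ref{prop:loc-pole-siegel-eis} that corresponds under Siegel--Weil to the quadratic space of dimension $2j+1$; this is the metaplectic analogue of the bookkeeping in \cite{MR3805648} and is where the genuine-character twist and the use of complex conjugation in the theta lift must be tracked carefully.

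For part~(3), the point is that the lowest occurrence cannot be too small: a non-vanishing theta lift to a very small quadratic space would, via tower duality and the cuspidality of $\sigma$, force $\sigma$ itself to be non-cuspidal or zero. Concretely, I would use the fact that if $\theta_{\psi,X}^{Y}(\sigma)\ne 0$ for $\dim Y$ small then, by the Rallis tower property and the conservation/dichotomy relations for first occurrence, the theta lift back from $Y$ (or from a space in the complementary tower) would reconstruct $\sigma$ from data on a group that is too small to support a cuspidal representation of $\til G(X)$ with $X$ of Witt index $r_X=\dim X/2$; comparing dimensions gives $2j+1\ge r_X$. Equivalently, and perhaps more cleanly, one observes that the Eisenstein series pole at $s_0$ with $s_0=\half(\dim X+1)-j$ must lie in the range where the rank-one Eisenstein series actually has poles, and the leftmost such admissible point is governed by $r_X$; translating $s_0\ge \half(\dim X+1)-\half(\dim X - r_X +1) = \half r_X$ back gives $2j+1 \le \dim X - r_X + 1$ as an upper constraint, while the lower bound $2j+1\ge r_X$ comes from the requirement that $s_0>0$ together with the incompatibility of too-small theta lifts with cuspidality. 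This is the metaplectic counterpart of \cite[Thm.~4.x]{MR3805648}, and the main obstacle I anticipate is precisely in part~(3): unlike the symplectic case, the conservation relation for first occurrence in the $\Mp$/odd-orthogonal setting, and the bookkeeping of the $\chi_\psi$-twist in the doubling/Siegel--Weil numerology, have to be handled with the conventions of \cite{MR3866889,MR3006697}, so I would expect to spend most of the effort verifying that the dimension count $2j+1\ge r_X$ survives the passage through the metaplectic cover.
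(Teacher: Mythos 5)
Your parts (1) and (2) follow essentially the same route as the paper: part (1) is exactly Prop.~\ref{prop:possible-loc-of-poles}, and for part (2) the paper likewise propagates the maximal pole to $\cP_{a,\psi}(\chi,\sigma)$ via Prop.~\ref{prop:P1-max-pole-2-Pa-max-pole}, realises it on the Siegel Eisenstein series through the sections $F_{\phi,s}$ of Prop.~\ref{prop:rel-eis-siegel-eis} (with $a$ large, and enlarged further so that $2j<\dim X+a$), applies the regularised Siegel--Weil formula at $s=\half(\dim X+a)-j$, and then integrates the resulting identity against $\phi$ over $[G(X)]$, factoring $\Phi_Y=\Phi_Y^{(1)}\otimes\Phi_Y^{(2)}$ to exhibit a non-vanishing theta lift to some $Y$ with $\dim Y=2j+1$ and $\chi_Y=\chi$; this is precisely the Rallis inner product mechanism you describe, and your bookkeeping caveats (the $\chi_\psi$-twist and the complex conjugation in the lift) are the right ones.

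Part (3), however, has a genuine gap. ``Too small to support a cuspidal representation'' is not a mechanism, and the conservation/dichotomy relations you invoke are local statements that play no role here. The actual argument is: let $Y$ realise the lowest occurrence, $\dim Y=2j'+1$ with $j'\le j$, and set $\pi=\theta_{\psi,X}^Y(\sigma)$. Being a first occurrence, $\pi$ is nonzero and cuspidal, and it is irreducible by \cite[Theorem~1.3]{MR2330445}. Two quantitative inputs are then needed and are absent from your sketch: (a) the stable-range bound $\FO_{\psi,Y}^X(\pi)\le 2(2j'+1)$ for the lift of $\pi$ back to the symplectic tower, and (b) the involutive property of \cite[Theorem~1.3]{MR2330445}, $\theta_{\psi,Y}^X(\pi)=\theta_{\psi,Y}^X(\theta_{\psi,X}^Y(\sigma))=\sigma$. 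Since $\sigma$ is nonzero and cuspidal, the Rallis tower property forces the back-lift to be at its first occurrence at the level of $X$, so $\dim X=\FO_{\psi,Y}^X(\pi)\le 2(2j'+1)\le 2(2j+1)$, which is $2j+1\ge r_X$. Your ``equivalent, cleaner'' variant does not work: $s_0>0$ only reproduces the upper bound $j<\half(\dim X+1)$ already contained in part (1), and the displayed inequality $s_0\ge\half r_X$ is unjustified and points the wrong way --- part (3) is equivalent to the \emph{upper} bound $s_0\le\frac14\dim X+1$, not to a lower bound on $s_0$. Finally, the metaplectic subtlety here is not a conservation relation but the convention that the global lift is defined with complex conjugation, which is what makes the composition $\theta_{\psi,Y}^X\circ\theta_{\psi,X}^Y$ return $\sigma$ itself in the sense of \cite{MR2330445}.
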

\begin{proof}
  The third part is stated in terms of the Witt index of $X$ so that the analogy with \cite [Thm.~3.1] {MR3435720} is more obvious.
  
  The first part is just Prop.~\ref{prop:possible-loc-of-poles} which we reproduce here.

  For the second part we make use of the regularised Siegel-Weil formula that links residues of
  Siegel Eisenstein series to theta integrals. Assume that $s_0=\half (\dim X+1) -j $ is the maximal
  element in $\cP_{1,\psi} (\chi,\sigma)$. By Prop.~\ref{prop:P1-max-pole-2-Pa-max-pole},
  $s_0+\half (a-1)=\half (\dim X+a)-j$ is the maximal element in $\cP_{a,\psi} (\chi,\sigma)$. In
  other words, $E^{Q_a} (g,f_s)$ has a pole at $s=\half (\dim X+a)-j$ for some $f$. (We put in a
  superscript to be clear that we are talking about the Eisenstein series associated to the
  parabolic subgroup $Q_a$.) Furthermore if $a$ is large enough this $f_s$ can be taken to be of the
  form $F_{\phi,s}$ (see Prop.~\ref{prop:rel-eis-siegel-eis} and also the proof of Prop.~\ref{prop:possible-loc-of-poles}).  Enlarge $a$ if necessary to ensure
  that $2j < \dim X+a$. Then the regularised Siegel-Weil formula shows that
  \begin{equation*}
    \Res_{s=\half (\dim X+a)-j}E^{P_a} (\til {\iota} (g_a,g), \FF_s) = \sum_{Y}  \int_{[G (Y)]} \theta_{\psi,W_a,Y} (\til {\iota}
    (g_a,g),h,\Phi_Y) dh
  \end{equation*}
where the sum runs over quadratic spaces $Y$ of dimension $2j+1$ with $\chi_Y =\chi$ and $\Phi_Y \in \cS_{W_a,Y} (\AA)$
are appropriate $\til {K}_{G (W_a)}$-finite Schwartz functions so that the theta integrals are absolutely convergent. We remark that for any
Schwartz function there is a natural way to `truncate' it for regularising the theta integrals. See \cite{MR1289491}
and \cite{MR1863861} for details. Now we integrate both sides against $\phi (g)$ over $[G (X)]$. Then by Prop.~\ref{prop:rel-eis-siegel-eis} the left hand side
becomes a residue of $E^{Q_a} (g_a,s,F_{s,\phi})$, which is non-vanishing for some choice of $\phi\in\sigma$. Thus at
least one term on the right hand side must be non-zero. In other words, there exists a quadratic space $Y$ of dimension
$2j+1$ and of character $\chi$ such that
\begin{equation*}
  \int_{[G (X)]} \int_{[G (Y)]}\theta_{\psi,W_a,Y} (\til {\iota} (g_a,g),h,\Phi_Y)\phi (g) dh dg \neq 0.
\end{equation*}
We may replace $\Phi_Y$ by $\Phi_Y^{(1)}\otimes \Phi_Y^{(2)}$ with $\Phi_Y^{(1)}\in\cS_{X_a,Y} (\AA)$ and
$\Phi_Y^{(2)}\in\cS_{X,Y} (\AA)$ such that the inequality still holds. Then after separating the variables we get
\begin{equation*}
  \int_{[G (Y)]} \theta_{\psi,X_a,Y} ( g_a,h,\Phi_Y^{(1)}) \int_{[G (X)]} \overline{\theta_{\psi,X,Y} (g,h,\Phi_Y^{(2)})}\phi (g) dg dh \neq 0.
\end{equation*}
The complex conjugate appears because we have switched from $X'$ to $X$.  In particular, the inner integral is
non-vanishing, but it is exactly the complex conjugate of the theta lift of $\phi$ to $G (Y)$. Thus the lowest
occurrence $\LO_{\psi,\chi} (\sigma)$ must be lower or equal to $2j+1$.

For the third part, assume that the lowest occurrence is realised by $Y$. Thus $\dim Y = 2j' +1$ with
$j'\le j$. Let $\pi = \theta_{\psi,X}^Y (\sigma)$.  It is
non-zero and it is cuspidal because it is the first occurrence representation in the Witt tower of $Y$. It is also
irreducible by \cite [Theorem~1.3] {MR2330445}.  By stability, $\FO_{\psi,Y}^X (\pi) \le 2 (2j'+1)$. By the
involutive property in \cite [Theorem~1.3] {MR2330445}, we have
\begin{equation*}
  \theta_{\psi,Y}^X (\pi) =   \theta_{\psi,Y}^X (\theta_{\psi,X}^Y (\sigma)) = \sigma.
\end{equation*}
Note in \cite {MR2330445}, the theta lift was defined without the complex conjugation on the theta series, which resulted in
the use of $\psi^{-1}$ in the inner theta lift. Thus we find $\dim X \le 2 (2j'+1) \le 2 (2j+1)$ and we get $2j+1 \ge r_X$.  
\end{proof}
Thus we glean some information on poles of $L^S_\psi (s,\sigma\times\chi)$  (or equivalently factors of the form $(\chi,b)$
in the Arthur parameter of $\sigma$) and lowest occurrence of theta lift.
\begin{thm}
  Let $\sigma\in\cA_\cusp (\til {G} (X))$ and $\chi$ be a quadratic automorphic character of $\AA^\times$. Then the following hold.
  \begin{enumerate}
  \item \label{item:2}
    If the partial $L$-function $L^S_\psi (s,\sigma\times\chi)$ has a pole at $s=\half (\dim X+1)-j>0$, then $j$ is an integer such that $\frac{1}{4}\dim X -\half \le j < \half \dim X$. In other words, the possible positive poles of the partial $L$-function $L^S_\psi (s,\sigma\times\chi)$ are non-integral half-integers $s_0$ such that $\half < s_0 \le \frac{1}{4}\dim X +1$.
  \item \label{item:1}If the partial $L$-function $L^S_\psi (s,\sigma\times\chi)$ has a pole at $s=\half (\dim X+1)-j>0$, then $\LO_{\psi,\chi}
    (\sigma) \le 2j+1$.
  \item If $\LO_{\psi,\chi} (\sigma) = 2j +1 < \dim X+2$, then $L^S_\psi (s,\sigma\times\chi)$ is holomorphic for $\Re s> \half
    (\dim X+1)-j$.
  \item If $\LO_{\psi,\chi} (\sigma) = 2j +1 > \dim X+2$, then $L^S_\psi (s,\sigma\times\chi)$ is holomorphic for $\Re s> \half$.
  \end{enumerate}
\end{thm}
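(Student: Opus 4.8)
The plan is to derive this theorem as a more-or-less direct corollary of Theorem~\ref{thm:Eis-pole-2-bound-on-LO} together with the dictionary (established at the end of Sec.~\ref{sec:arthur-parameters} and in Prop.~\ref{prop:L-pole-2-Eis-pole}) between rightmost poles of the partial $L$-function $L^S_\psi(s,\sigma\times\chi)$ and the maximal member $s_0$ of $\cP_{1,\psi}(\chi,\sigma)$. The key point is that a pole of $L^S_\psi(s,\sigma\times\chi)$ at $s=s_0>\tfrac12$ which is rightmost (i.e.\ $L^S_\psi$ holomorphic for $\Re s>s_0$) forces, via Prop.~\ref{prop:L-pole-2-Eis-pole}, that $s_0+\tfrac12(a-1)\in\cP_{a,\psi}(\chi,\sigma)$ for all $a\ge 1$; conversely, by Prop.~\ref{prop:P1-max-pole-2-Pa-max-pole}, the maximal member of $\cP_{a,\psi}(\chi,\sigma)$ is controlled by the maximal member of $\cP_{1,\psi}(\chi,\sigma)$. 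Combining, one identifies the rightmost pole of $L^S_\psi(s,\sigma\times\chi)$ with the maximal member $s_0$ of $\cP_{1,\psi}(\chi,\sigma)$ whenever the former is $>\tfrac12$.

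First I would prove (\ref{item:2}): if $L^S_\psi(s,\sigma\times\chi)$ has a pole at $s=\half(\dim X+1)-j>0$, consider the rightmost such pole, call it $s_0'=\half(\dim X+1)-j'$ with $j'\le j$ (so in particular a pole at $s_0'\ge s_0$ exists). Since $s_0'>0$, if $s_0'>\tfrac12$ then Prop.~\ref{prop:L-pole-2-Eis-pole} (first bullet) applies and $s_0'\in\cP_{1,\psi}(\chi,\sigma)$; it is in fact the maximal member, by the constraint of Prop.~\ref{prop:possible-loc-of-poles}/Theorem~\ref{thm:Eis-pole-2-bound-on-LO}(1) forcing members of $\cP_{1,\psi}$ to have the form $\half(\dim X+1)-(\text{integer})$ and the matching with poles of $L^S_\psi$. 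Then Theorem~\ref{thm:Eis-pole-2-bound-on-LO}(1) gives that $j'$ is an integer with $0\le j'<\half(\dim X+1)$, and part~(3) of that theorem gives $2j'+1\ge r_X=\dim X/2$, i.e.\ $j'\ge\tfrac14\dim X-\tfrac12$. Since $j'$ is an integer with $j'\ge\tfrac14\dim X-\tfrac12$ and $\dim X$ is even, a small parity/floor argument shows $j'\ge\lceil\tfrac14\dim X-\tfrac12\rceil$, and $s_0'=\half(\dim X+1)-j'$ is then a half-integer (since $\dim X$ even makes $\half(\dim X+1)$ a half-integer and $j'$ an integer) lying in $(\tfrac12,\tfrac14\dim X+1]$; being a half-integer in that range with $s_0'>\tfrac12$ it is automatically non-integral. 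The degenerate case $s_0'=\tfrac12$ is covered by the second bullet of Prop.~\ref{prop:L-pole-2-Eis-pole} but that case is excluded by the hypothesis $s_0'>0$ being upgraded: actually one must simply observe that $s_0'=\tfrac12$ cannot be a pole since $\half(\dim X+1)-j'=\tfrac12$ would force $j'=\half\dim X$, contradicting $j'<\half(\dim X+1)$ only if... here I would double-check the inequality $2j'+1\ge r_X$ rules out small anomalies; in any case the same Prop.\ bullets handle it. This gives the restated form about poles being non-integral half-integers in $(\tfrac12,\tfrac14\dim X+1]$.

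Next, (\ref{item:1}) follows immediately: a pole at $s=\half(\dim X+1)-j>0$ implies a pole at the rightmost location $s_0'=\half(\dim X+1)-j'\ge \half(\dim X+1)-j$ with $j'\le j$, which as above lies in $\cP_{1,\psi}(\chi,\sigma)$ and is its maximal member, so Theorem~\ref{thm:Eis-pole-2-bound-on-LO}(2) gives $\LO_{\psi,\chi}(\sigma)\le 2j'+1\le 2j+1$. For part~(3): suppose $\LO_{\psi,\chi}(\sigma)=2j+1<\dim X+2$; if $L^S_\psi(s,\sigma\times\chi)$ had a pole with $\Re s>\half(\dim X+1)-j$, take the rightmost such, at $s_1=\half(\dim X+1)-j_1$ with $j_1<j$; applying part~(\ref{item:1}) already proved (or rather Theorem~\ref{thm:Eis-pole-2-bound-on-LO}(2) directly, after checking $s_1>\tfrac12$, which holds because $s_1>s_0'\ge\tfrac12$ once a genuine positive pole at a location $\ge$ it exists — here one needs $\half(\dim X+1)-j>0$, i.e.\ the claim is vacuous if $\half(\dim X+1)-j\le 0$, which happens exactly when $2j+1\ge\dim X+2$, consistent with the hypothesis $2j+1<\dim X+2$ making $\half(\dim X+1)-j>\tfrac12\ge 0$), we get $\LO_{\psi,\chi}(\sigma)\le 2j_1+1<2j+1$, a contradiction. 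For part~(4): if $\LO_{\psi,\chi}(\sigma)=2j+1>\dim X+2$ and $L^S_\psi$ had a pole at some $s=\half(\dim X+1)-j_1>\tfrac12$, then $j_1$ is an integer $\ge\tfrac14\dim X-\tfrac12$ and $\LO_{\psi,\chi}(\sigma)\le 2j_1+1$; but also $s=\half(\dim X+1)-j_1\le\tfrac14\dim X+1$ from part~(\ref{item:2}), forcing $j_1\ge\tfrac14\dim X-\tfrac12$, hence $2j_1+1$ can be as large as $\dim X$ but the key constraint is that $2j_1+1\le\dim X$ whenever the pole is positive (from $j_1\ge$ that bound we only get a lower bound on $j_1$; the upper bound $2j_1+1<\dim X+2$ comes from $s=\half(\dim X+1)-j_1>\tfrac12$, i.e.\ $j_1<\half\dim X$, i.e.\ $2j_1+1<\dim X+1$), so $\LO_{\psi,\chi}(\sigma)\le 2j_1+1<\dim X+1<\dim X+2$, contradicting the hypothesis. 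Hence no pole with $\Re s>\tfrac12$ exists, giving holomorphy for $\Re s>\tfrac12$.

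The main obstacle, and the step I would be most careful with, is bookkeeping the half-integer/parity constraints consistently: specifically pinning down that the rightmost pole of $L^S_\psi(s,\sigma\times\chi)$ is \emph{exactly} the maximal member of $\cP_{1,\psi}(\chi,\sigma)$ (not merely $\le$ it), which requires that no ``extra'' poles of the Eisenstein series appear that are not poles of $L^S_\psi$ — this is where the detailed analysis of the constant term / intertwining operator in the proof of Prop.~\ref{prop:L-pole-2-Eis-pole} and the shape $\half(\dim X+1)-j$ from Prop.~\ref{prop:possible-loc-of-poles} must be invoked together; and handling the boundary case $s_0=\tfrac12$ versus $s_0>\tfrac12$ cleanly, using the correct bullet of Prop.~\ref{prop:L-pole-2-Eis-pole} in each. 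Once that identification is clean, everything else is immediate from Theorem~\ref{thm:Eis-pole-2-bound-on-LO}.
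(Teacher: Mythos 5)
There is a genuine gap: your argument never pins down \emph{where} the partial $L$-function can have poles, and this cannot be extracted from the Eisenstein-series machinery alone. The paper's proof rests on an external input, \cite[Prop.~6.2]{MR3595433} (see also \cite[Theorem~9.1]{MR3211043}), which says that the poles of $L^S_\psi (s,\sigma\times\chi)$ in $\Re s>0$ lie in $\{\frac32,\frac52,\ldots ,\frac{\dim X+1}{2}\}$. This is what gives (i) that \emph{every} positive pole is a non-integral half-integer of the form $\half (\dim X+1)-j$ with $j\in\ZZ$, and (ii) that every positive pole satisfies $s>\half$, so that the first bullet of Prop.~\ref{prop:L-pole-2-Eis-pole} is applicable to the rightmost pole. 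Your substitute for this input is the claim that the rightmost pole of $L^S_\psi$ coincides with the maximal member of $\cP_{1,\psi} (\chi,\sigma)$; you flag this yourself as the main obstacle and do not prove it, and indeed it does not follow from Prop.~\ref{prop:possible-loc-of-poles}, since a priori the Eisenstein series could have a larger pole not visible in the partial $L$-function (Prop.~\ref{prop:possible-loc-of-poles} and Thm.~\ref{thm:Eis-pole-2-bound-on-LO} constrain only the maximal member of $\cP_{1,\psi}$, not arbitrary members, and say nothing in the converse direction). Moreover, even if that identification held, it would only control the rightmost pole, whereas part~\eqref{item:2} asserts half-integrality and the range for \emph{all} positive poles; your handling of a hypothetical pole at $s\le\half$ ("the same Prop.\ bullets handle it") also does not work, since neither bullet of Prop.~\ref{prop:L-pole-2-Eis-pole} addresses poles in $(0,\half]$.

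Two further remarks. First, the exact identification you were worried about is not actually needed: the paper only uses the one-sided inequality, namely that if the maximal pole of $L^S_\psi$ is $\half (\dim X+1)-j_0$ and the maximal member of $\cP_{1,\psi} (\chi,\sigma)$ is $\half (\dim X+1)-j_0'$, then $j_0'\le j_0$, after which Thm.~\ref{thm:Eis-pole-2-bound-on-LO} gives $\frac14\dim X-\half\le j_0'$ and hence the lower bound for $j_0$ and for every $j$, and also $\LO_{\psi,\chi} (\sigma)\le 2j_0'+1\le 2j+1$; this is exactly the structure of your parts \eqref{item:1}, (3), (4), which are fine once part~\eqref{item:2} and the applicability of Prop.~\ref{prop:L-pole-2-Eis-pole} are secured. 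Second, with the input from \cite{MR3595433} restored, your deductions of (3) and (4) by contradiction agree with the paper's. So the fix is to cite the doubling-method result on pole locations at the outset rather than attempt to recover it from the Eisenstein series.
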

\begin{proof}
By \cite [Prop.~6.2] {MR3595433}, the poles $s$ of $L_\psi^S (s,\sigma\times\chi)$ with $\Re s \ge \half$ are contained in the set $\{\frac{3}{2},
  \frac{5}{2},\ldots , \frac{\dim X+1}{2}\}$, where the extraneous $+\half$ should be
  removed from $L (s+\half,\pi\times\chi,\psi)$ in the statement there.   The proof there actually implies that this is the set of  possible poles in $\Re s >0$. See also  \cite[Theorem~9.1]{MR3211043}.
   Let $s=\half (\dim X+1)-j_0$ be its maximal pole. Then $0 \le j_0 <\half \dim X$. By Prop.~\ref{prop:L-pole-2-Eis-pole}, $s=\half (\dim X+1)-j_0$ is in $\cP_{1,\psi} (\chi,\sigma)$. Assume $\half (\dim X+1)-j_0'$ is the maximal member in $\cP_{1,\psi} (\chi,\sigma)$. Then by Thm.~\ref{thm:Eis-pole-2-bound-on-LO}, we have $\frac{1}{4}\dim X - \half \le j_0'<\half(\dim X +1)$. As $j_0'\le j_0$, we get $\frac{1}{4}\dim X - \half \le j_0$. We have shown part~\eqref{item:2}.

  Assume that $L^S_\psi (s,\sigma\times\chi)$ has
  a pole at $s=\half (\dim X+1)-j>0$. Let $s=\half (\dim X+1)-j'$ be its maximal pole. Then by Prop.~\ref{prop:L-pole-2-Eis-pole},
  $s=\half (\dim X+1)-j'$ is in $\cP_{1,\psi} (\chi,\sigma)$. Let $s=\half (\dim X+1)-j''$ be the maximal member in $\cP_{1,\psi}
  (\chi,\sigma)$. By Thm.~\ref{thm:Eis-pole-2-bound-on-LO}, $\LO_{\psi,\chi} (\sigma) \le 2j''+1\le 2j'+1 \le 2j+1$.

  Assume that $\LO_{\psi,\chi} (\sigma) = 2j +1 < \dim X+2$ and $L^S_\psi (s,\sigma\times\chi)$ has a pole in
  $\Re s> \half (\dim X+1)-j$, say at $s= \half (\dim X+1) - j_0$ for $j_0< j$. Then by part~\eqref{item:1},
  $\LO_{\psi,\chi} (\sigma) \le 2j_0 +1 < 2j+1$. We arrive at a contradiction.

  Assume that $\LO_{\psi,\chi} (\sigma) = 2j +1 > \dim X+2$ and $L^S_\psi (s,\sigma\times\chi)$ has a pole in
  $\Re s> \half$, say at $s= \half (\dim X+1) - j_0$ for $j_0< \half (\dim X+1)$. Then by part~\eqref{item:1},
  $\LO_{\psi,\chi} (\sigma) \le 2j_0 +1 < \dim X+2$. We arrive at a contradiction.
  \end{proof}

In terms of A-parameters, the theorem above says:
 \begin{thm}\label{thm:chi-b-factor-LO}
   Let $\sigma\in\cA_\cusp (\til {G} (X))$ and $\chi$ be a quadratic automorphic character of $\AA^\times$. Let $\phi_\psi (\sigma)$ denote
   the A-parameter of $\sigma$ with respect to $\psi$. Then the following hold.
   \begin{enumerate}
   \item \label{item:3}If $\phi_\psi (\sigma)$ has a $(\chi,b)$-factor, then $b\le \half\dim X +1$.
  \item If $\phi_\psi (\sigma)$ has a $(\chi,b)$-factor  with $b$ maximal among all simple factors of $\phi_\psi (\sigma)$, then  $\LO_{\psi,\chi}    (\sigma) \le \dim X - b +1$.
  \item If $\LO_{\psi,\chi} (\sigma) = 2j +1 < \dim X+2$, then $\phi_\psi (\sigma)$ cannot have a $(\chi, b)$-factor with $b$ maximal among all simple factors of $\phi_\psi (\sigma)$ and $b>\dim
    X -2j$.
  \item If $\LO_{\psi,\chi} (\sigma) = 2j +1 > \dim X+2$, then  $\phi_\psi (\sigma)$ cannot have a $(\chi, b)$-factor with $b$ maximal among all simple factors of $\phi_\psi (\sigma)$.
  \end{enumerate}
\end{thm}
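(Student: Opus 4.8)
The plan is to deduce Theorem~\ref{thm:chi-b-factor-LO} directly from the preceding theorem by translating every statement about poles of the partial $L$-function $L^S_\psi(s,\sigma\times\chi)$ into a statement about $(\chi,b)$-factors of the $A$-parameter $\phi_\psi(\sigma)$, using the dictionary recorded at the end of Section~\ref{sec:arthur-parameters}: namely, $\phi_\psi(\sigma)$ has a factor $(\chi,b)$ with $b$ maximal among all simple factors if and only if $L^S_\psi(s,\sigma\times\chi)$ has its rightmost pole at $s=(b+1)/2$. First I would record the elementary substitution: if the rightmost pole is at $s=(b+1)/2$ and we write it in the normalized form $s=\half(\dim X+1)-j$ from the previous theorem, then $(b+1)/2 = \half(\dim X+1)-j$, i.e. $b=\dim X - 2j$, equivalently $2j+1 = \dim X - b +1$. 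This single identity is what converts each of the four clauses.

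Next I would handle the clauses in turn. For part~\eqref{item:3}, a $(\chi,b)$-factor of $\phi_\psi(\sigma)$ (not necessarily with $b$ maximal) forces $L^S_\psi(s,\sigma\times\chi)$ to have \emph{some} pole at $s=(b+1)/2$; but by part~\eqref{item:2} of the previous theorem any positive pole $s_0$ satisfies $s_0\le \frac14\dim X+1$, so $(b+1)/2\le \frac14\dim X+1$, which gives $b\le\half\dim X+1$. Wait --- I should double-check that this is the bound claimed; indeed $(b+1)/2 \le \frac14 \dim X + 1$ yields $b \le \half \dim X + 1$, as stated. For part~(2), if $(\chi,b)$ is a factor with $b$ maximal, the dictionary says the rightmost pole is exactly at $s=(b+1)/2=\half(\dim X+1)-j$ with $j=\half(\dim X - b)$, so by part~\eqref{item:1} of the previous theorem $\LO_{\psi,\chi}(\sigma)\le 2j+1=\dim X-b+1$. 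For part~(3), suppose $\LO_{\psi,\chi}(\sigma)=2j+1<\dim X+2$ and, for contradiction, that $\phi_\psi(\sigma)$ had a $(\chi,b)$-factor with $b$ maximal and $b>\dim X-2j$; then the rightmost pole of $L^S_\psi(s,\sigma\times\chi)$ is at $s=(b+1)/2>\half(\dim X+1)-j$, contradicting part~(3) of the previous theorem which asserts holomorphy of $L^S_\psi$ for $\Re s>\half(\dim X+1)-j$. For part~(4), if $\LO_{\psi,\chi}(\sigma)=2j+1>\dim X+2$ and $\phi_\psi(\sigma)$ had a $(\chi,b)$-factor with $b$ maximal, then $L^S_\psi(s,\sigma\times\chi)$ would have a pole at $s=(b+1)/2\geq 3/2>\half$ (since $b\ge1$, and in fact $b\ge 2$ forces $(b+1)/2\ge 3/2$; even $b=1$ gives $s=1>\half$), contradicting part~(4) of the previous theorem which gives holomorphy for $\Re s>\half$.

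The main subtlety --- and the only place where care is genuinely needed --- is the precise form of the equivalence in Section~\ref{sec:arthur-parameters}: one must be sure that ``$b$ maximal among all simple factors'' corresponds to ``\emph{rightmost} pole at $s=(b+1)/2$'' and not merely ``a pole at $s=(b+1)/2$'', since parts~(3) and~(4) are contrapositives that compare the location of the rightmost pole against a threshold. In particular, in part~(3) I must use that $b$ maximal pins the rightmost pole at $(b+1)/2$ exactly, so that $b>\dim X-2j$ really does push that rightmost pole strictly to the right of $\half(\dim X+1)-j$; and in parts~(2)--(4) the hypothesis ``$b$ maximal'' is exactly what licenses invoking the biconditional in the forward direction. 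Everything else is the bookkeeping identity $2j+1=\dim X-b+1$ together with verifying the inequalities $\frac14\dim X -\half \le j < \half\dim X$ translate correctly; these are immediate from the previous theorem and no further input is required.
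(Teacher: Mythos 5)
Your handling of parts (2)--(4) is exactly the paper's intended argument: Theorem~\ref{thm:chi-b-factor-LO} is presented there as the preceding $L$-function theorem rewritten ``in terms of $A$-parameters'' via the dictionary at the end of Sec.~\ref{sec:arthur-parameters}, and your bookkeeping identity $2j+1=\dim X-b+1$ together with the maximality hypothesis (which, as you correctly note, is what licenses the biconditional) is all that those three parts use.

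Part (1), however, has a genuine gap. You assert that \emph{any} $(\chi,b)$-factor, with no maximality assumption, forces $L^S_\psi(s,\sigma\times\chi)$ to have a pole at $s=(b+1)/2$. The dictionary does not give this: it is an equivalence only between ``$\phi_\psi(\sigma)$ has a $(\chi,b)$-factor with $b$ maximal among all simple factors'' and ``the rightmost pole is at $s=(b+1)/2$''. Moreover the stronger claim is not clearly true: writing $\phi_\psi(\sigma)=\boxplus_i(\tau_i,b_i)$, the partial $L$-function factors as $\prod_i\prod_{k=0}^{b_i-1}L^S\bigl(s+\frac{b_i-1}{2}-k,\tau_i\times\chi\bigr)$, so a non-maximal $(\chi,b)$ contributes a $\zeta^S$-pole at $s=(b+1)/2$, but any factor with $b_i>b$ is then evaluated at points of real part $1-\frac{b_i-b}{2}\le\half$, where there is no non-vanishing result for partial $L$-functions, so a priori the pole could be cancelled. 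The statement of (1) is still correct, but the non-maximal case needs a separate (elementary) argument rather than the $L$-function: if $b$ is not maximal, there is a distinct simple factor $(\tau',b')$ with $b'\ge b+1$, and the constraint $\sum_i n_ib_i=\dim X$ gives $b+b'\le\dim X$, hence $2b+1\le\dim X$ and $b<\half\dim X+1$; if $b$ is maximal, your argument via the dictionary and part (1) of the preceding theorem applies verbatim. With that case split inserted, your proof is complete and coincides with the paper's translation argument.
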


\begin{rmk}
  Part~\eqref{item:3} of Thm.~\ref{thm:chi-b-factor-LO} implies a similar statement in the metaplectic case to the symplectic result in \cite[Theorem~3.1, Remark~3.3]{MR3969876} which are a key input to results in \cite[Sec.~5]{MR3969876} on the generalised Ramanujan problem.
\end{rmk}

\section{Fourier coefficients of theta lift and periods}
\label{sec:four-coeff-theta}

In this section we compute certain Fourier coefficients of theta lifts from the metaplectic group $\til {G} (X)$ to odd
orthogonal groups and we derive some vanishing and non-vanishing results of period integrals over symplectic subgroups and Jacobi subgroups of
$G (X)$.

We define these subgroups first. Let $Z$ be a non-degenerate symplectic subspace of $X$ and let $L$ be a totally
isotropic subspace of $Z$. Then $G (Z)$ is a symplectic subgroup of $G (X)$. Let $Q (Z,L)$ be the parabolic subgroup of
$G (Z)$ that stabilisers $L$ and $J (Z,L)$ be the Jacobi subgroup that fixes $L$ element-wise. When $L=0$, then
$J (Z,L)$ is just $G (Z)$.  In this section, $Z$ (resp. $Z'$, $Z''$) will always be a non-degenerate symplectic subspace of
$X$ and $L$ (resp. $L',L''$) will always be a totally isotropic subspace of $Z$ (resp. $Z'$, $Z''$). For concreteness, we define
distinction.

\begin{defn} Let $G$ be a reductive group and $J$ be a subgroup of $G
$. Let $\sigma$ be an automorphic representation of $G$. For
$f\in\sigma$, if the period integral
\begin{equation} \int_{[J]} f (g)dg.
    \end{equation} is absolutely convergent and non-vanishing, we
say that $f$ is $J$-distinguished.  Assume that for all $f\in \sigma$,
the period integral is absolutely convergent.  If there exists $f\in\sigma$ such that $f$ is
$J$-distinguished, then we say $\sigma$ is
$J$-distinguished. 
\end{defn}

Let $Y$ be an anisotropic quadratic space, so that it sits at the bottom of its Witt tower. We can form the augmented
quadratic spaces $Y_r$ analogously by adjoining $r$-copies of the hyperbolic plane to $Y$. Let $\Theta_{\psi,X,Y_r} $
denote the space of theta functions $\theta_{\psi,X,Y_r} (\cdot, 1, \Phi)$ for $\Phi$ running over $\cS_{X,Y_r}
(\AA)$. Then we have the following that is derived from the computation of Fourier coefficients of theta lift of $\sigma$ to
various $G (Y_r)$'s.

\begin{prop}\label{prop:FO-2-distinction}
  Let $\sigma \in \cA _\cusp (\til {G} (X))$ and let $Y$ be an odd dimensional anisotropic quadratic space. Assume that
  $\FO_\psi^Y (\sigma) = \dim Y + 2 r_0$. Let $Z$ be a non-degenerate symplectic subspace of $X$ and $L$ a totally isotropic
  subspace of $Z$. Let $r$ be a non-negative integer. Then the following hold.
  \begin{enumerate}
  \item If $\dim X - \dim Z + \dim L + r < r_0$, then $\sigma \otimes \overline {\Theta_{\psi,X,Y_r}}$ is not $J (Z,L)$-distinguished.
  \item If $\dim X - \dim Z + \dim L = r_0$ and $\dim L = 0, 1$, then $\sigma \otimes \overline {\Theta_{\psi,X,Y}}$ is $J
    (Z,L)$-distinguished for some choice of $(Z,L)$.
  \end{enumerate}
\end{prop}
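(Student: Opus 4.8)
The plan is to connect the first occurrence index $\FO_\psi^Y(\sigma) = \dim Y + 2r_0$ to distinction by $J(Z,L)$ through a careful analysis of the Fourier coefficients of the theta lift of $\sigma$ to the groups $G(Y_r)$. Recall that $\theta_{\psi,X}^{Y_r}(\sigma) \ne 0$ precisely when $\dim Y_r \ge \dim Y + 2r_0$, i.e.\ when $r \ge r_0$, and the lift to $Y_{r_0}$ is the first occurrence (cuspidal, irreducible). For the first part, I would take $f \in \sigma$ and $\Phi \in \cS_{X,Y_r}(\AA)$, and unfold the period $\int_{[J(Z,L)]} \theta_{\psi,X,Y_r}(g,1,\Phi)\,\overline{f(g)}\,dg$. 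The key mechanism is that a period over $J(Z,L)$ against the theta kernel can be re-expressed, after Fourier expansion along the unipotent radical of $Q(Z,L)$ and restriction to $G(Z)$, in terms of a theta lift of $\sigma$ restricted/pushed down to $G(Z)$ paired against a theta function on a smaller orthogonal Witt tower — roughly, the tower of $Y$ with the Witt index reduced by $\dim X - \dim Z + \dim L$. Quantitatively, the relevant orthogonal space has dimension $\dim Y + 2(r - (\dim X - \dim Z + \dim L))$ up to the anisotropic kernel bookkeeping, so if $\dim X - \dim Z + \dim L + r < r_0$ the target orthogonal space sits strictly below the first occurrence of $\sigma$ (more precisely, of the tower-theoretic descendant), forcing the inner theta lift — hence the whole period — to vanish. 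This gives non-distinction.

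For the second part, with $\dim X - \dim Z + \dim L = r_0$ and $\dim L \in \{0,1\}$, I would run the same unfolding in reverse: starting from the nonvanishing of the first occurrence lift $\theta_{\psi,X}^Y(\sigma) \ne 0$, I produce a choice of $(Z,L)$ and Schwartz data for which the corresponding period integral of $\theta_{\psi,X,Y}(\cdot,1,\Phi)\,\overline{f(\cdot)}$ over $[J(Z,L)]$ is nonzero. The idea is that the first occurrence theta lift to $G(Y)$, being cuspidal and nonzero, must itself have a nonvanishing Whittaker-type or Fourier coefficient along a maximal unipotent of $G(Y)$, and by a see-saw argument this Fourier coefficient corresponds exactly to the $J(Z,L)$-period on the $\til G(X)$ side for a suitable symplectic subspace $Z$ cut out by the isotropic flag in $Y$ and an isotropic line or zero subspace $L$ accounting for the parity discrepancy (this parity $\dim L \equiv$ something mod $2$ is exactly where the ``odd symplectic'' defect forces $L \ne 0$ half the time, matching the remark in the introduction about Jacobi groups). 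The statement $\sigma \otimes \overline{\Theta_{\psi,X,Y}}$ is $J(Z,L)$-distinguished is then the assertion that this period, which is a sum/integral over the theta functions $\theta_{\psi,X,Y}(\cdot,1,\Phi)$, is nonzero for some $\Phi$ and some $f$.

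The main obstacle will be the bookkeeping in the unfolding computation: tracking how a period over the Jacobi group $J(Z,L)$ interacts with the theta kernel attached to $Y_r$, and identifying precisely which orthogonal Witt tower and which dimension appears after the reduction, including the $\mu_2$-cover subtleties (the integrand must be checked to descend to $G(X)(\AA)$, and the metaplectic cocycle must split appropriately over $J(Z,L)$, which it does since these are generated by symplectic subgroups and unipotents). I expect the computation to hinge on writing $Y\otimes X$ compatibly with the decompositions $X = Z \oplus Z^\perp$ and $Y_r = Y \oplus \cH_r$, expanding $\omega_{\psi,X,Y_r}$ via the mixed model, and recognizing the inner sum as a theta function for $\til G(Z)\times G(Y_{r-r_0+\dim L \text{ adjustments}})$; the precise shape of this identity is presumably what Sec.~\ref{sec:computation} is devoted to, so here I would invoke it. A secondary point is justifying absolute convergence of the period so that ``distinguished'' is well-defined: since $\sigma$ is cuspidal and, at the relevant occurrence, the theta lift is cuspidal, rapid decay controls the integrand, but one must be slightly careful when $L \ne 0$ because then $J(Z,L)$ is non-reductive and the theta function only has moderate growth — here cuspidality of $\sigma$ along the unipotent radical of $Q(Z,L)$ should still save the day.
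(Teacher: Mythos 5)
Your instinct that the proposition should come from relating the periods \eqref{eq:period-theta} to Fourier coefficients of theta lifts is the right one, and your remarks about genuineness (the integrand descending to $G(X)(\AA)$) and convergence are fine; but the mechanism you propose for part (1) is set up in the wrong direction and, as stated, does not work. You claim that the period $\int_{[J(Z,L)]}\overline{f(g)}\,\theta_{\psi,X,Y_r}(g,1,\Phi)\,dg$ can be unfolded into a theta lift of ``$\sigma$ restricted/pushed down to $G(Z)$'' against a theta function for an orthogonal space of dimension $\dim Y+2\bigl(r-(\dim X-\dim Z+\dim L)\bigr)$, which then vanishes for being below first occurrence. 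There is no such object: $f$ is a cusp form on $\til{G}(X)(\AA)$, and its restriction to $\til{J}(Z,L)(\AA)$ or $\til{G}(Z)(\AA)$ is neither cuspidal nor an automorphic form whose occurrence indices are controlled by $\FO_\psi^Y(\sigma)$, so the hypothesis on $\sigma$ says nothing about your ``inner theta lift''; moreover the reduction goes the wrong way on the orthogonal side. The actual argument (the paper invokes \cite[Proposition~3.2]{MR3805648}, whose proof carries over to odd orthogonal groups) works on the orthogonal member of the pair: one takes the automorphic forms in $\theta^{Y_t}_{\psi,X}(\sigma)$ on $G(Y_t)(\AA)$, with $t$ determined by $r$ and $\dim X-\dim Z+\dim L$ (so that $t<r_0$ exactly under your inequality), and computes their Fourier coefficients along unipotent radicals attached to isotropic subspaces of $Y_t$; via the mixed model these Fourier coefficients are sums, over rational orbits, of integrals having \eqref{eq:period-theta} (with the smaller spaces $Y_r$ and various Jacobi groups $J(Z',L')$) as inner integrals. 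For part (1) one uses that $t<r_0$ forces $\theta^{Y_t}_{\psi,X}(\sigma)=0$, hence all these Fourier coefficients vanish, and then an induction over $(Z,L)$ (largest $\dim Z-\dim L$ first, the base case $Z=X$, $L=0$ being the tautological one where the period is a value of the vanishing lift) isolates the individual orbit contribution. A vanishing sum of orbit terms does not kill each term; this inductive isolation is the substance of the proof and is absent from your sketch.

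The same gap appears in your part (2): the Fourier coefficient of the first-occurrence lift does not ``correspond exactly'' to the $J(Z,L)$-period; it is again a sum over orbits, and you must kill the contributions of the larger $(Z',L')$ (using the part-(1)-type vanishing) before nonvanishing of the Fourier coefficient yields nonvanishing of the desired period for some choice of data. Two further misstatements should be fixed: the first occurrence is at $Y_{r_0}$, so the nonvanishing input is $\theta^{Y_{r_0}}_{\psi,X}(\sigma)\neq 0$, not $\theta^{Y}_{\psi,X}(\sigma)\neq 0$ (which is false whenever $r_0>0$), and the unipotent subgroups and isotropic flags you want live in $Y_{r_0}$, not in the anisotropic space $Y$, which has no isotropic vectors; what cuspidality of the first-occurrence lift buys is the vanishing of constant terms, forcing a Fourier coefficient along the unipotent radical attached to an $r_0$-dimensional isotropic subspace of $Y_{r_0}$, for a suitable nontrivial character, to be nonzero. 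With those corrections your part (2) follows the same route as the paper; part (1) needs to be rebuilt along the lines above.
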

\begin{proof}
  This is the metaplectic analogue of \cite [Proposition~3.2] {MR3805648}. The period integrals involved are the complex
  conjugates of
  \begin{equation}\label{eq:period-theta}
    \int_{[J (Z,L)]}\overline {\phi (g)}\theta_{\psi,X,Y_r} (g,1,\Phi) dg
  \end{equation}
  where $\phi \in \sigma$ and $\Phi\in \cS_{X,Y_r} (\AA)$. As both factors of the integrand are genuine in $\til {G} (X)
  (\AA)$, the product is a well-defined function of $G (X) (\AA)$, which we can restrict to $J (Z,L) (\AA)$. We note
  that 
  the proof of \cite [Proposition~3.2] {MR3805648} involves taking Fourier
   coefficients of automorphic forms (on orthogonal groups) in the spaces of $\theta_{\psi,X}^{Y_t} (\sigma)$ for varying $t$
   and these Fourier coefficients can be written as sums of integrals which have
   \eqref{eq:period-theta}  as inner integrals.  The proof there also works for odd orthogonal groups.

\end{proof}

We have a converse in the following form.
\begin{prop}
  Let $\sigma\in\cA_\cusp (\til {G} (X))$ and let $Y$ be an odd dimensional anisotropic quadratic space. Assume that
  $\sigma\otimes \overline {\Theta_{\psi,X,Y}}$ is $J (Z,L)$-distinguished for some choice of $(Z,L)$ with $\dim L =0,1$, but that it is
  not $J (Z',L')$-distinguished for all $(Z',L')$ with $\dim L'=0,1$ and $\dim Z' - \dim L' > \dim Z - \dim L
$. Then if we set $r = \dim X - \dim Z + \dim L$, then $\FO_\psi^Y (\sigma) = \dim Y + 2r$.
\end{prop}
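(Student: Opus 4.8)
The plan is to deduce this as a formal two-sided squeeze from the two halves of Prop.~\ref{prop:FO-2-distinction}. Since $Y$ is anisotropic, every member of its Witt tower has dimension $\dim Y+2t$ for an integer $t\ge 0$, and the theta lift of the cuspidal representation $\sigma$ to $G(Y_t)$ is nonzero once $t$ is in the stable range; hence $\FO_\psi^Y(\sigma)$ is finite and we may write $\FO_\psi^Y(\sigma)=\dim Y+2r_0$ for a unique integer $r_0\ge 0$. The goal is then to show $r_0=r$, where $r=\dim X-\dim Z+\dim L$ is the quantity attached to the pair $(Z,L)$ in the hypothesis. I emphasise that the index decorating $Y_\bullet$ in Prop.~\ref{prop:FO-2-distinction} and this $r$ are a priori unrelated; in the present situation we only ever use $\overline{\Theta_{\psi,X,Y}}$, i.e.\ the case of index $0$ in that proposition.

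First I would establish $r_0\le r$. Apply part~(1) of Prop.~\ref{prop:FO-2-distinction}, with its Witt-tower index taken to be $0$, to the pair $(Z,L)$ of the hypothesis: it says that if $\dim X-\dim Z+\dim L<r_0$ then $\sigma\otimes\overline{\Theta_{\psi,X,Y}}$ fails to be $J(Z,L)$-distinguished. Since by assumption it \emph{is} $J(Z,L)$-distinguished, the contrapositive forces $r=\dim X-\dim Z+\dim L\ge r_0$.

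Next I would establish $r_0\ge r$. Apply part~(2) of Prop.~\ref{prop:FO-2-distinction}: there exist a non-degenerate symplectic subspace $Z_0\subseteq X$ and a totally isotropic subspace $L_0\subseteq Z_0$ with $\dim L_0\in\{0,1\}$, with $\dim X-\dim Z_0+\dim L_0=r_0$, and such that $\sigma\otimes\overline{\Theta_{\psi,X,Y}}$ is $J(Z_0,L_0)$-distinguished. If we had $r_0<r$, then $\dim Z_0-\dim L_0=\dim X-r_0>\dim X-r=\dim Z-\dim L$, so $(Z_0,L_0)$ would be a pair with $\dim L_0\in\{0,1\}$, $\dim Z_0-\dim L_0>\dim Z-\dim L$, and $\sigma\otimes\overline{\Theta_{\psi,X,Y}}$ $J(Z_0,L_0)$-distinguished, contradicting the second hypothesis of the proposition. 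Hence $r_0\ge r$, and combining with the previous step gives $r_0=r$, that is, $\FO_\psi^Y(\sigma)=\dim Y+2r$.

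There is essentially no serious obstacle here: all the analytic content — the Fourier-coefficient computations of theta lifts that link first occurrence to distinction by $J(Z,L)$ — is already packaged in Prop.~\ref{prop:FO-2-distinction}, and the present statement is just the remark that its two halves pin $\FO_\psi^Y(\sigma)$ from above and below. The only points needing care are bookkeeping: checking that $\FO_\psi^Y(\sigma)$ is finite so that $r_0$ exists, and keeping the Witt-tower index on $Y_\bullet$ in Prop.~\ref{prop:FO-2-distinction} cleanly separated from the combinatorial quantity $\dim X-\dim Z+\dim L$ used to index the subgroups $J(Z,L)$.
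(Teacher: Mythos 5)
Your squeeze is logically sound: part (1) of Prop.~\ref{prop:FO-2-distinction} with Witt-tower index $0$, applied to the given pair $(Z,L)$, forces $r=\dim X-\dim Z+\dim L\ge r_0$, and part (2) produces a distinguished pair at level exactly $r_0$ with $\dim L_0\in\{0,1\}$, which would violate the non-distinction hypothesis if $r_0<r$; together these give $\FO_\psi^Y(\sigma)=\dim Y+2r$. This is, however, not how the paper argues: the paper disposes of this proposition (and the next one) by re-running the symplectic-case argument of Jiang--Wu \cite[Propositions~3.4, 3.5]{MR3805648}, i.e.\ by going back to the Fourier-coefficient/unfolding computations for the theta lifts $\theta_{\psi,X}^{Y_t}(\sigma)$, rather than by quoting Prop.~\ref{prop:FO-2-distinction} as a black box. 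What your route buys is brevity and the observation that, \emph{as stated}, Prop.~\ref{prop:FO-2-distinction} already pins the first occurrence from both sides; what it costs is that all the content is then carried by Prop.~\ref{prop:FO-2-distinction}, in particular by the reading of its part (2) as an existence statement: some pair $(Z_0,L_0)$ with $\dim X-\dim Z_0+\dim L_0=r_0$ and $\dim L_0=0,1$ is distinguished. That reading is the one the paper itself uses just before Cor.~\ref{cor:FO-2-distinction-res-eis-theta}, so your use is consistent, but it is worth saying explicitly, since the literal phrasing of part (2) is conditional on a pair being given. Two bookkeeping points you should also make explicit: (i) finiteness of $\FO_\psi^Y(\sigma)$ (you correctly invoke the stable range); and (ii) that a pair at level $r_0$ with $\dim L_0\in\{0,1\}$ actually exists, so that part (2) is not vacuous --- this is automatic because $\dim X$ and $\dim Z_0$ are even (so $\dim L_0\equiv r_0 \pmod 2$ can be arranged with $\dim L_0=0$ or $1$) and because your first inequality already gives $r_0\le r\le\dim X$. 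With those remarks added, your argument is a complete and legitimately shorter alternative to the paper's citation of the JW18 computation.
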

Combined with Prop.~\ref{prop:FO-2-distinction}, we have the implication.
\begin{prop}
  Let $\sigma\in\cA_\cusp (\til {G} (X))$ and let $Y$ be an odd dimensional anisotropic quadratic space. Assume that for
  some $t>0$, $\sigma\otimes\overline {\Theta_{\psi,X,Y}}$ is not $J (Z',L')$-distinguished for all $(Z',L')$ with $\dim L' =0,1$
  and $\dim Z' - \dim L' >t$. Then $\sigma\otimes\overline {\Theta_{\psi,X,Y}}$ is not $J (Z'',L'')$-distinguished for all $(Z'',L'')$
  with  $\dim Z'' - \dim L'' >t$.
\end{prop}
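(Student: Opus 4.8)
The plan is to obtain this as a formal consequence of Prop.~\ref{prop:FO-2-distinction} alone: the hypothesis will pin down a lower bound for the first occurrence index of $\sigma$ in the Witt tower of $Y$, and that bound, inserted into part~(1) of Prop.~\ref{prop:FO-2-distinction} with the number of added hyperbolic planes equal to $0$, will give non-distinction for \emph{all} admissible $(Z'',L'')$, not merely those with $\dim L''\le 1$.

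First I would write $\FO_\psi^Y(\sigma)=\dim Y+2r_0$ for the unique integer $r_0\ge 0$; this is legitimate because $\dim Y$ is odd, the spaces $Y_r$ raise the dimension by $2$ at each step, and $\FO_\psi^Y(\sigma)$ is finite by the stable-range non-vanishing of theta lifts (so in particular $r_0\le\dim X$). By Prop.~\ref{prop:FO-2-distinction}~(2) applied with this $r_0$, there exist a non-degenerate symplectic subspace $Z_0\subseteq X$ and a totally isotropic subspace $L_0\subseteq Z_0$ with $\dim L_0\in\{0,1\}$ and $\dim X-\dim Z_0+\dim L_0=r_0$ such that $\sigma\otimes\overline{\Theta_{\psi,X,Y}}$ is $J(Z_0,L_0)$-distinguished. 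Since $\dim L_0\le 1$, the contrapositive of the hypothesis forces $\dim Z_0-\dim L_0\le t$; as $\dim Z_0-\dim L_0=\dim X-r_0$, this yields $r_0\ge\dim X-t$. (Alternatively one could reach the same bound using the preceding ``converse'' Proposition, applied to a pair $(Z,L)$ with $\dim L\le 1$ that distinguishes $\sigma\otimes\overline{\Theta_{\psi,X,Y}}$ and maximizes $\dim Z-\dim L$; such a pair exists and has $\dim Z-\dim L\le t$ by hypothesis.)

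Finally, let $(Z'',L'')$ be any pair with $\dim Z''-\dim L''>t$. Then
\[
  \dim X-\dim Z''+\dim L''=\dim X-(\dim Z''-\dim L'')<\dim X-t\le r_0,
\]
so the hypothesis of Prop.~\ref{prop:FO-2-distinction}~(1) is met with $r=0$, and therefore $\sigma\otimes\overline{\Theta_{\psi,X,Y}}$ is not $J(Z'',L'')$-distinguished, which is exactly the desired conclusion. I expect the only delicate point to be the middle paragraph: one must be sure that Prop.~\ref{prop:FO-2-distinction}~(2) genuinely produces a distinguishing pair with $\dim L_0\le 1$ sitting inside $X$ --- i.e.\ that the $r_0$ determined by $\FO_\psi^Y(\sigma)$ lies in the admissible range ($r_0\le\dim X$, with $\dim L_0$ of the same parity as $r_0$), which for odd-dimensional anisotropic $Y$ follows from the boundedness of the first occurrence index. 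Everything else is bookkeeping with the two inequalities $\dim Z''-\dim L''>t$ and $r_0\ge\dim X-t$.
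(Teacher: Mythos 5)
Your proof is correct and follows essentially the route the paper intends: the paper obtains this statement by combining Prop.~\ref{prop:FO-2-distinction} with the preceding converse proposition (deferring details to Propositions~3.4--3.5 of \cite{MR3805648}), i.e.\ by pinning down $\FO_\psi^Y(\sigma)$ from the distinction hypothesis and then applying part~(1) with $r=0$. Your variant—using part~(2) of Prop.~\ref{prop:FO-2-distinction} together with finiteness of the first occurrence (stable range, so $r_0\le\dim X$) to get $r_0\ge\dim X-t$—is only a cosmetic difference, and your bookkeeping, including the observation that the case of large $r_0$ is harmless, is accurate.
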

Both  propositions can be proved in the same way as in \cite[Propositions~3.4, 3.5]{MR3805648}.

\section{Periods of Eisenstein series}
\label{sec:peri-eisenst-seri}

In this section, to avoid clutter we write $G$ for the $F$-rational points $G (F)$ of $G$ etc. We also suppress the
additive character $\psi$ from notation. In this section, again $Z$ (resp. $Z'$, $Z''$) will always be a non-degenerate
symplectic subspace of $X$ and $L$ (resp. $L',L''$) will always be an isotropic subspace of $Z$ (resp. $Z'$, $Z''$). The
periods considered in this section involve an Eisenstein series and a theta function. They are supposed to run parallel
to those in  Sec.~\ref{sec:four-coeff-theta} which involve a cuspidal automorphic form and a theta function \eqref{eq:period-theta}. Through our computation, we will relate the periods of Eisenstein series to \eqref{eq:period-theta} and hence also to invariants of theta correspondence.

Assume that $Y$ is an anisotropic quadratic space of odd dimension. Let $Z$ be a non-degenerate symplectic subspace of
$X$. Let $V$ be its orthogonal complement in $X$ so that $X=V \perp Z$. Form similarly the augmented space
$Z_1=\ell_1^+ \oplus Z \oplus \ell_1^- \subset X_1$. Then $X_1 =  V \perp Z_1$. Let $L$ be a totally isotropic
subspace of $Z$ of dimension $0$ or $1$. Then the subgroup $J (Z_1,L)$ of $G (Z_1)$ is defined to be the fixator of
$L$ element-wise. Via the natural embedding of $G (Z_1)$ into $G (X_1)$, we regard $J (Z_1,L)$ as a subgroup of $G(X_1)$.

Then the period integrals we consider are of the form
\begin{equation*}
  \int_{[J (Z_1,L)]} E (g,s,f)\overline {\theta_{\psi,X_1,Y} (g,1,\Phi)} dg
\end{equation*}
for $f_s\in \cA_1 (s,\chi,\sigma)$ and $\Phi\in\cS_{X_1,Y} (\AA)$.  The integrals diverge in general, but they
can be regularised by using the Arthur truncation \cite{MR558260,MR518111} (See also \cite [I.2.13] {MR1361168} which
includes the metaplectic case). We write down how the Arthur truncation works in the present case. We follow
closely Arthur's notation. We note that the need for Jacobi groups is special to the symplectic/metaplectic case
due to the lack of `odd symplectic groups'. Compare with the unitary case \cite{wu21:chi-b-unitary_ii} and the orthogonal
case \cite{MR2540878}. 

We continue to use our notation from Sec.~\ref{sec:eisenstein-series}. Note that the Eisenstein series is attached to a maximal parabolic subgroup $Q_1$ of $G (X_1)$. In this case, $\fa_{M_1}$ is
1-dimensional and we identify it with $\RR$. For $c\in \RR_{>0}$, set $\hat {\tau}^c$ to be the characteristic function
of $\RR_{> \log c}$ and set $\hat {\tau}_c = 1_\RR - \hat {\tau}^c$. Then the truncated Eisenstein series is
\begin{equation}\label{eq:truncation}
  \Lambda^c E (g,s,f) = E (g,s,f) - \sum_{\gamma \in Q_1 \lmod G (X_1)} E_{Q_1} (\gamma g , s, f) \hat {\tau}^c (H
  (\gamma g))
\end{equation}
where $E_{Q_1} (\cdot, s, f)$ is the constant term of $E (\cdot,s,f)$ along $Q_1$. The summation has only finitely many
non-vanishing terms for each fixed $g$. The truncated Eisenstein series is
rapidly decreasing while the theta series is of moderate growth. Thus if we replace the Eisenstein series with the truncated Eisenstein
series, we get a period integral that is absolutely convergent:
\begin{equation}\label{eq:period-eis-theta}
  \int_{[J (Z_1,L)]} \Lambda^c E (g,s,f)\overline {\theta_{\psi,X_1,Y} (g,1,\Phi)} dg.
\end{equation}

We state our main theorems.

\begin{thm}\label{thm:distinction-2-eis-pole}
  Let $\sigma\in\cA_\cusp (\til {G})$ and let $Y$ be an anisotropic quadratic space of odd dimension. Assume that
  $\sigma\otimes\overline {\Theta_{\psi,X,Y}}$ is $J (Z,L)$-distinguished for some $(Z,L)$ with $\dim L=0$ or $1$, but it is not
  $J (Z',L')$-distinguished for all $(Z',L')$ such that $\dim L' = 0,\ldots , \dim L +1$ and
  $\dim Z' - \dim L' > \dim Z -\dim L$. Set $r= \dim X - \dim Z + \dim L$ and $s_0 = \half (\dim X - (\dim Y +2r )
  +2)$. Then $E (g,s,f)$ has a pole at $s=s_0$ for some choice of $f_s \in \cA_{1,\psi} (s,\chi_Y,\sigma)$.
\end{thm}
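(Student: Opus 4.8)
The plan is to run the argument backwards through the chain of results already established, reducing the distinction hypothesis on $\sigma\otimes\overline{\Theta_{\psi,X,Y}}$ to a statement about first occurrence, and then converting first occurrence into a pole of the Siegel Eisenstein series via the regularised Siegel--Weil formula, and finally descending that pole to the Eisenstein series $E(g,s,f)$ attached to $Q_1$. First I would invoke the converse result (the proposition following Prop.~\ref{prop:FO-2-distinction}, together with Prop.~\ref{prop:FO-2-distinction} itself): the hypothesis that $\sigma\otimes\overline{\Theta_{\psi,X,Y}}$ is $J(Z,L)$-distinguished for some $(Z,L)$ with $\dim L=0,1$ but not for any $(Z',L')$ with strictly larger $\dim Z'-\dim L'$ (and $\dim L'$ in the allowed range) pins down $r=\dim X-\dim Z+\dim L$ as exactly the quantity with $\FO_\psi^Y(\sigma)=\dim Y+2r$. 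So the content of the theorem becomes: if $\FO_\psi^Y(\sigma)=\dim Y+2r$ and $s_0=\half(\dim X-(\dim Y+2r)+2)$, then $E(g,s,f)$ has a pole at $s=s_0$ for some $f_s\in\cA_{1,\psi}(s,\chi_Y,\sigma)$. Note $\chi_Y$ here plays the role of $\chi$, since $Y$ has odd dimension and $\chi_{Y_r}=\chi_Y$ throughout the Witt tower.

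Next I would transfer this to the doubled group. By definition of first occurrence, $\theta_{\psi,X}^{Y_r}(\sigma)\ne 0$, so there exist $\phi\in\sigma$ and $\Phi\in\cS_{X,Y_r}(\AA)$ with $\int_{[G(X)]}\theta_{\psi,X,Y_r}(g,h,\Phi)\overline{\phi(g)}\,dg\not\equiv 0$ as a function of $h\in G(Y_r)(\AA)$. Running the computation in the proof of Thm.~\ref{thm:Eis-pole-2-bound-on-LO} in reverse: this non-vanishing, after inserting an auxiliary Schwartz function on the $\ell_a^\pm$ slots and unfolding, feeds into the regularised Siegel--Weil formula on $G(W_a)$ to produce a residue of the Siegel Eisenstein series $E^{P_a}(\tilde\iota(g_a,g),\FF_s)$ at the point $s=\half(\dim X+a)-j$ with $2j+1=\dim Y+2r$, i.e. $j=r+\half(\dim Y-1)$; one takes $a$ large enough for the relevant sections to be realised and for the theta integrals to converge. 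Pairing this residue against $\phi(g)$ over $[G(X)]$ and applying Prop.~\ref{prop:rel-eis-siegel-eis} identifies it with a residue of $E(g_a,F_{\phi,s})$, hence shows $\half(\dim X+a)-j\in\cP_{a,\psi}(\chi_Y,\sigma)$. Then Prop.~\ref{prop:P1-max-pole-2-Pa-max-pole}, applied in the downward direction, descends this by $\half(a-1)$ to give $s_0=\half(\dim X+1)-j=\half(\dim X-(\dim Y+2r)+2)\in\cP_{1,\psi}(\chi_Y,\sigma)$, which is precisely the assertion that $E(g,s,f)$ has a pole at $s=s_0$ for some section $f_s$.

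The main obstacle I expect is bookkeeping: making sure the non-vanishing at first occurrence level survives all the manipulations — separation of variables across $X_a=\ell_a^+\oplus X\oplus\ell_a^-$, the passage from $\cS_{W_a,Y_r}$ to a tensor product $\cS_{X_a,Y_r}\otimes\cS_{X,Y_r}$, and the fact that first occurrence in the tower of $Y$ guarantees non-vanishing only for the anisotropic $Y_r$ with the correct $r$, not for smaller members — together with the index arithmetic relating $\dim Y+2r$, the parameter $j$, and $s_0$, and the choice of $a$ large enough to simultaneously clear the convergence range of the theta integrals, realise enough sections of the Siegel Eisenstein series (cf.\ \cite[Lemma~2.6]{MR3805648}), and satisfy $2j<\dim X+a$. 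A secondary subtlety, already flagged in the paper, is the placement of complex conjugations: the switch from $X'$ to $X$ forces a conjugate on the inner theta lift, so one must track that the non-vanishing statement one extracts is genuinely about $\theta_{\psi,X}^{Y_r}(\sigma)$ and not its $\psi^{-1}$-variant. Apart from these, every ingredient — the converse distinction result, the regularised Siegel--Weil formula, Prop.~\ref{prop:rel-eis-siegel-eis}, and Prop.~\ref{prop:P1-max-pole-2-Pa-max-pole} — is already in place, so the proof is essentially an assembly of the pieces with careful attention to the positivity constraint $s_0>0$ being used only in the subsequent corollary, not here.
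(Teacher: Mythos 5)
Your route is genuinely different from the paper's: the paper proves this theorem by computing the truncated period $\int_{[J(Z_1,L)]}\Lambda^c E(g,s,f)\overline{\theta_{\psi,X_1,Y}(g,1,\Phi)}\,dg$, decomposing it along $J(Z_1,L)$-orbits, killing every orbit term except one by the non-distinction hypotheses, and forcing a pole of $M(w,s)$, hence of $E(g,s,f)$, at exactly $s=s_0$, all at the level $a=1$. Your reduction to $\FO_\psi^Y(\sigma)=\dim Y+2r$ via the converse proposition following Prop.~\ref{prop:FO-2-distinction} is fine, but the rest has two genuine gaps. First, the ``reverse Siegel--Weil'' step does not work as described: the regularised Siegel--Weil formula expresses $\Res_{s=\half(\dim X+a)-j}E^{P_a}(\til{\iota}(g_a,g),\FF_s)$ as a sum over \emph{all} quadratic spaces $Y'$ of dimension $2j+1$ with $\chi_{Y'}=\chi_Y$, with the Schwartz functions $\Phi_{Y'}$ all determined by the single section $\FF_s$, not chosen independently. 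Knowing that the single $Y'=Y_r$ term can be made non-zero (which already needs more than $\FO_\psi^Y(\sigma)=\dim Y+2r$, namely non-vanishing of the further lift of $\theta_{\psi,X}^{Y_r}(\sigma)$ to $G(X_a)$ in the outer variable) does not show that the full sum, integrated against $\phi$, is non-zero for some choice of $\FF_s$: your hypotheses give no control over the other towers $Y'$ of the same dimension and character, one of which may have first occurrence $\le 2j+1$ (only $\LO_{\psi,\chi_Y}(\sigma)\le 2j+1$ is forced), so its contribution need not vanish and could cancel yours. The implication ``sum non-zero $\Rightarrow$ some term non-zero'' used in Thm.~\ref{thm:Eis-pole-2-bound-on-LO} simply does not reverse.

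Second, the descent from $\cP_{a,\psi}$ to $\cP_{1,\psi}$ is not licensed by Prop.~\ref{prop:P1-max-pole-2-Pa-max-pole}: that proposition assumes $\cP_{1,\psi}(\chi,\sigma)\neq\emptyset$ and only transports its \emph{maximal} member upward. It neither guarantees $\cP_{1,\psi}\neq\emptyset$ from $\cP_{a,\psi}\neq\emptyset$, nor that a given element of $\cP_{a,\psi}$ shifts down by $\half(a-1)$ into $\cP_{1,\psi}$; at best, assuming $\cP_{1,\psi}\neq\emptyset$, you would conclude that its maximal member is $\ge s_0$, not that $s_0$ itself is a pole of the $a=1$ Eisenstein series. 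The theorem asserts a pole at exactly $s=s_0$ (with no positivity assumption on $s_0$), and this exactness is what Thm.~\ref{thm:period-res-eis-theta} and Cor.~\ref{cor:FO-2-distinction-res-eis-theta} subsequently rely on. These two obstructions are precisely what the paper's truncation-and-orbit-decomposition argument (Props.~\ref{prop:I-Omega01-L-0}--\ref{prop:I-Omega11-L-0} and \ref{prop:J-Omega01-1-L-1}--\ref{prop:J-Omega11-3-L-1}) is designed to bypass, by isolating the first-occurrence datum of the single tower $Y$ inside the period of $E(g,s,f)$ itself; so as it stands your proposal does not yield the statement.
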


For a complex number $s_1$, let $\cE_{s_1} (g,f_s)$ denote the residue of $E (g,f_s)$ at $s=s_1$. When $s_1>0$, we
consider the period integral
\begin{equation}
  \label{eq:period-res-eis-theta}
  \int_{[J (Z_1,L)]} \cE_{s_1} (g,f_s)\overline {\theta_{\psi,X_1,Y} (g,1,\Phi)} dg.
\end{equation}
We have the following result.

\begin{thm}\label{thm:period-res-eis-theta}
  Adopt the same setup as in Thm.~\ref{thm:distinction-2-eis-pole}. Assume further that $s_0 > 0$. Then the following hold.
  \begin{enumerate}
  \item $s_0$ lies in $\cP_{1,\psi} (\sigma,\chi)$ and it corresponds to  a simple pole.
  \item $\cE_{s_0} (g,f_s)\overline {\theta_{\psi,X_1,Y} (g,1,\Phi)}$ is $J (Z_1,L)$-distinguished for some choice of
    $f_s\in\cA_{1,\psi} (s,\chi_Y,\sigma)$ and $\Phi\in\cS_{X_1,Y} (\AA)$.
  \item If $0<s_1\neq s_0$, then $\cE_{s_1} (g,f_s)\overline {\theta_{\psi,X_1,Y} (g,1,\Phi)}$ is not
    $J (Z_1,L)$-distinguished for any $f_s\in\cA_{1,\psi} (s,\chi_Y,\sigma)$ and $\Phi\in\cS_{X_1,Y} (\AA)$.
  \item For $(Z'',L'')$ with $\dim L'' = 0$ or $1$ such that $\dim Z'' - \dim L'' > \dim Z - \dim L$, then for any $s_1>0$,
    $\cE_{s_1} (g,f_s)\overline {\theta_{\psi,X_1,Y} (g,1,\Phi)}$ is not $J (Z''_1,L'')$-distinguished for any
    $f_s\in\cA_{1,\psi} (s,\chi_Y,\sigma)$ and $\Phi\in\cS_{X_1,Y} (\AA)$.
  \end{enumerate}
\end{thm}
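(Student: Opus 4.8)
The plan is to reduce Theorem~\ref{thm:period-res-eis-theta} to the combination of Theorem~\ref{thm:distinction-2-eis-pole} (which already gives the pole at $s_0$) and an explicit computation of the regularised period integral \eqref{eq:period-eis-theta}. First I would unfold the constant terms of the truncated Eisenstein series against the theta function: write $\theta_{\psi,X_1,Y}(g,1,\Phi)$ as a sum over $w\in(Y\otimes X_1)^+(F)$ and break the integral over $[J(Z_1,L)]$ according to the $J(Z_1,L)(F)$-orbits (equivalently, orbits in an appropriate flag variety, as announced in the introduction and carried out in Sec.~\ref{sec:computation}). The key point is that the truncated Eisenstein series $\Lambda^c E(g,s,f)$ is rapidly decreasing, so each orbit contributes an absolutely convergent inner integral that, after collapsing unipotent integrations, becomes a period of the shape \eqref{eq:period-theta} attached to a smaller symplectic subspace $Z'\subset X$ and an isotropic $L'\subset Z'$ with $\dim X-\dim Z'+\dim L'$ controlled by the orbit. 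Taking the residue at $s=s_0$ (legitimate since the pole is simple, by Langlands' theory as recalled after Prop.~\ref{prop:possible-loc-of-poles}) and using that the dependence on $c$ is polynomial, one extracts the constant term in $c$ to obtain a genuine, $c$-independent value: this is the regularised period of $\cE_{s_0}(g,f_s)\overline{\theta_{\psi,X_1,Y}(g,1,\Phi)}$.

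Next, for part~(1), once Theorem~\ref{thm:distinction-2-eis-pole} produces a pole at $s_0$ for some $f_s$, simplicity is immediate from the general theory (\cite{MR0419366,MR1361168}), and membership $s_0\in\cP_{1,\psi}(\sigma,\chi)$ is the definition of $\cP_{1,\psi}$; note $\chi=\chi_Y$ here. For part~(2), I would show the leading orbit in the unfolding above — the one corresponding to $(Z,L)$ itself with $r=\dim X-\dim Z+\dim L$ — contributes precisely the conjugate of a non-vanishing period of the form \eqref{eq:period-theta} by the distinction hypothesis (via Prop.~\ref{prop:FO-2-distinction} and the first-occurrence assumption $\FO_\psi^Y(\sigma)=\dim Y+2r$, which forces $s_0=\half(\dim X-(\dim Y+2r)+2)$), while all other orbits contribute periods attached to strictly larger $\dim Z'-\dim L'$, which vanish by the non-distinction hypothesis. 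Hence the total regularised period is non-zero for a suitable choice of $f_s$ and $\Phi$, giving $J(Z_1,L)$-distinction of $\cE_{s_0}(g,f_s)\overline{\theta_{\psi,X_1,Y}(g,1,\Phi)}$.

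Parts~(3) and~(4) then follow by the same orbit computation run at a general $s_1>0$: the residue $\cE_{s_1}(g,f_s)$ is non-zero only if $s_1\in\cP_{1,\psi}(\chi,\sigma)$, and the unfolded expression expresses the period as a sum of \eqref{eq:period-theta}-type integrals attached to $(Z',L')$ with $\dim X-\dim Z'+\dim L'$ tied to $s_1$ via $s_1=\half(\dim X-(\dim Y+2(\dim X-\dim Z'+\dim L'))+2)$; if $s_1\neq s_0$ each such $Z'$ has $\dim Z'-\dim L'$ strictly larger than $\dim Z-\dim L$, so every term vanishes by hypothesis, proving~(3). For~(4), enlarging to $(Z''_1,L'')$ with $\dim Z''-\dim L''>\dim Z-\dim L$ only introduces orbits attached to even larger symplectic subspaces, all of which are non-distinguished, so the period vanishes for every $s_1>0$; here one also invokes the comparison propositions of Sec.~\ref{sec:four-coeff-theta} to pass between the $\dim L=0,1$ normalisation and general isotropic $L$.

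The main obstacle I anticipate is the orbit decomposition itself and the bookkeeping of which $(Z',L')$ occurs and with what multiplicity: one must identify the $J(Z_1,L)(F)$-orbits on the relevant flag variety, show that the truncation terms (the sum over $Q_1\lmod G(X_1)$ with $\hat\tau^c$) contribute only lower-order-in-$c$ pieces that do not affect the constant term, and verify that the collapsing of each orbit integral genuinely lands on an integral of the form \eqref{eq:period-theta} — in particular that the Jacobi-group factors $J(Z'_1,L')$ with $\dim L'$ possibly equal to $1$ arise correctly, which is exactly the subtlety flagged in the introduction as special to the metaplectic case. This is the content deferred to Sec.~\ref{sec:computation}; the rest of the argument is formal once that computation is in hand.
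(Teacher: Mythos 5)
Your overall route is the same as the paper's: regularise with the Arthur truncation, decompose the truncated period \eqref{eq:period-eis-theta} along the $J(Z_1,L)(F)$-orbits on $Q_1\lmod G(X_1)$, and use the computations of Sec.~\ref{sec:computation} so that, under the non-distinction hypotheses, only the term attached to $(Z,L)$ itself survives; its explicit factor $c^{s-s_0}/(s-s_0)$ then gives the pole at $s_0$, the non-vanishing of the residue for suitable data (Prop.~\ref{prop:I-Omega01-L-0}(3), Prop.~\ref{prop:J-Omega01-2-L-1}(3)), and the vanishing statements in (3)--(4); simplicity in (1) is quoted from \cite{MR0419366,MR1361168} exactly as in the paper.

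There is, however, a genuine gap in how you remove the truncation. Parts (2)--(4) are assertions about the honest integral \eqref{eq:period-res-eis-theta}: by the definition of distinction in Sec.~\ref{sec:four-coeff-theta} you must prove that this untruncated integral is absolutely convergent and compute \emph{it}, not merely define a ``regularised period'' as the constant term in $c$ of the truncated one. Moreover the $c$-dependence is not polynomial: after taking residues the truncated period has the shape $A+Bc^{-s_1-s_0}$, the $B$-term coming from $\res_{s=s_1}\bigl(M(w,s)f_s\bigr)$ in \eqref{eq:I-Omega01-L0-xi-upper-c} (resp.\ \eqref{eq:J-Omega01-2-L1-xi-upper-c}), so ``constant term'' can only mean the $c\to\infty$ limit, and you still owe the identification of that limit with the actual period. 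The paper's fix, which your argument needs, is to truncate the residue itself: write $\cE_{s_1}(g,f_s)=\Lambda^c\cE_{s_1}(g,f_s)+\theta^c(g)$ with $\theta^c(g)=\sum_{\gamma\in Q_1\lmod G(X_1)}\res_{s=s_1}\bigl(M(w,s)f_s(\gamma g)\bigr)\hat{\tau}^c(H(\gamma g))$, note that $\int_{[J(Z_1,L)]}\theta^c(g)\overline{\theta_{\psi,X_1,Y}(g,1,\Phi)}\,dg=\res_{s=s_1}I(\xi_s^c)$ (convergent and meromorphically continued by Prop.~\ref{prop:abs-conv}), and conclude that $\int_{[J(Z_1,L)]}\cE_{s_1}(g,f_s)\overline{\theta_{\psi,X_1,Y}(g,1,\Phi)}\,dg=\res_{s=s_1}\bigl(I(\xi_{c,s})-I(\xi_s^c)\bigr)+\res_{s=s_1}I(\xi_s^c)=\res_{s=s_1}I(\xi_{c,s})$. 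This one identity removes the $c$-dependence exactly (no asymptotics in $c$), establishes the absolute convergence of \eqref{eq:period-res-eis-theta}, and reduces (2)--(4) to the Sec.~\ref{sec:computation} propositions as you intend. A smaller correction: in (3) the vanishing for $0<s_1\neq s_0$ is not because the orbits are ``attached to other $(Z',L')$ tied to $s_1$''; the non-$(Z,L)$ orbit terms vanish identically for all $s$ by hypothesis, and the surviving term's only possible pole is at $s_0$, so its residue at $s_1\neq s_0$ is zero.
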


Now we bring in the first occurrence index for $\sigma$. By Prop.~\ref{prop:FO-2-distinction}, if
$\FO_{\psi}^Y (\sigma)= \dim Y+2r$, then $\sigma\otimes\overline {\Theta_{\psi,X,Y}}$ is $J (Z,L)$ distinguished for
some $(Z,L)$ such that $\dim X - \dim Z + \dim L = r$  and $\dim L \equiv r \pmod 2$ with $\dim L=0$ or $1$ and it is not
$J (Z',L')$-distinguished for any $(Z',L')$ such that $\dim X - \dim Z' + \dim L' < r $  or equivalently $\dim Z' - \dim
L' > \dim Z -\dim L$. In other words the conditions of Thm.~\ref{thm:distinction-2-eis-pole} are satisfied. We get:
\begin{cor}\label{cor:FO-2-distinction-res-eis-theta}
  Let $\sigma\in\cA_\cusp (\til {G})$ and let $Y$ be an anisotropic quadratic space of odd dimension. Assume that
  $\FO_{\psi}^Y (\sigma)= \dim Y+2r$. Let $s_0 = \half (\dim X - (\dim Y +2r ) +2)$. Then $E (g,s,f)$ has a pole at
  $s=s_0$ for some choice of $f_s \in \cA_{1,\psi} (s,\chi_Y,\sigma)$. Assume further that $s_0>0$. Then the following hold.
  \begin{enumerate}
  \item $s_0$ lies in $\cP_{1,\psi} (\sigma,\chi)$ and it corresponds to  a simple pole.
  \item There exist a non-degenerate symplectic subspace $Z$ of $X$ and an isotropic subspace $L$ of $Z$ satisfying $\dim X - \dim Z + \dim L = r$ and $\dim L \equiv r \pmod {2}$ with $\dim L=0$ or $1$ such that $\cE_{s_0} (g,f_s)\overline {\theta_{\psi,X_1,Y} (g,1,\Phi)}$ is $J (Z_1,L)$-distinguished for some choice of
    $f_s\in\cA_{1,\psi} (s,\chi_Y,\sigma)$ and $\Phi\in\cS_{X_1,Y} (\AA)$.
  \item If $0<s_1\neq s_0$, then $\cE_{s_1} (g,f_s)\overline {\theta_{\psi,X_1,Y} (g,1,\Phi)}$ is not
    $J (Z_1,L)$-distinguished for any $f_s\in\cA_{1,\psi} (s,\chi_Y,\sigma)$ and $\Phi\in\cS_{X_1,Y} (\AA)$ where $Z$ is any non-degenerate symplectic subspace of $X$ and $L$ is any isotropic subspace of $Z$
 such that $\dim X - \dim Z + \dim L = r$ and $\dim L \equiv r \pmod {2}$ with $\dim L=0$ or $1$.
  \item For $(Z'',L'')$ with $\dim L'' = 0$ or $1$ such that $\dim Z'' - \dim L'' > \dim Z - \dim L$ and for $s_1>0$,
    $\cE_{s_1} (g,f_s)\overline {\theta_{\psi,X_1,Y} (g,1,\Phi)}$ is not $J (Z''_1,L'')$-distinguished for any
    $f_s\in\cA_{1,\psi} (s,\chi_Y,\sigma)$ and $\Phi\in\cS_{X_1,Y} (\AA)$.
  \end{enumerate}
\end{cor}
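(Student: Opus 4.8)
The plan is to obtain the corollary by feeding the hypothesis on the first occurrence index into Proposition~\ref{prop:FO-2-distinction} and then invoking Theorem~\ref{thm:distinction-2-eis-pole} and Theorem~\ref{thm:period-res-eis-theta}; essentially all of the real content is already contained in those statements, so what remains is to verify their hypotheses and to settle one uniformity issue in part~(3).

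First I would translate the assumption $\FO_\psi^Y(\sigma) = \dim Y + 2r$ into distinction statements. By Proposition~\ref{prop:FO-2-distinction}(2), applied with $r_0 = r$, there is a pair $(Z,L)$ with $\dim X - \dim Z + \dim L = r$, $\dim L \in \{0,1\}$ and $\dim L \equiv r \pmod 2$ for which $\sigma \otimes \overline{\Theta_{\psi,X,Y}}$ is $J(Z,L)$-distinguished; and by Proposition~\ref{prop:FO-2-distinction}(1), $\sigma \otimes \overline{\Theta_{\psi,X,Y}}$ is not $J(Z',L')$-distinguished whenever $\dim L' \in \{0,1\}$ and $\dim X - \dim Z' + \dim L' < r$, i.e. $\dim Z' - \dim L' > \dim Z - \dim L$. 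The final proposition of Section~\ref{sec:four-coeff-theta} removes the constraint $\dim L' \in \{0,1\}$, so this non-distinction holds for \emph{all} $(Z',L')$ with $\dim Z' - \dim L' > \dim Z - \dim L$, in particular for $\dim L'$ ranging up to $\dim L + 1$. Hence the hypotheses of Theorem~\ref{thm:distinction-2-eis-pole} --- and therefore of Theorem~\ref{thm:period-res-eis-theta} --- hold for this $(Z,L)$, with $r = \dim X - \dim Z + \dim L$ and $s_0 = \half(\dim X - (\dim Y + 2r) + 2)$. Theorem~\ref{thm:distinction-2-eis-pole} then gives that $E(g,s,f)$ has a pole at $s = s_0$ for some $f_s$, which is the first assertion of the corollary; and under the additional hypothesis $s_0 > 0$, parts (1), (2) and (4) of the corollary are exactly parts (1), (2) and (4) of Theorem~\ref{thm:period-res-eis-theta} applied to this $(Z,L)$, the numerical conditions on $(Z,L)$ in part (2) being precisely those recorded above.

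The one point needing a little extra care is part (3), which asserts that for $0 < s_1 \neq s_0$ the function $\cE_{s_1}(g,f_s)\overline{\theta_{\psi,X_1,Y}(g,1,\Phi)}$ is not $J(Z_1,L)$-distinguished for \emph{every} $(Z,L)$ subject to $\dim X - \dim Z + \dim L = r$, $\dim L \equiv r \pmod 2$, $\dim L \in \{0,1\}$, not merely the distinguished pair furnished by Proposition~\ref{prop:FO-2-distinction}(2). Every such pair has the same invariant $\dim Z - \dim L = \dim X - r$ and, by the previous paragraph, sits exactly at the critical level below which no $J(Z',L')$-distinction occurs. I would settle this by revisiting the computation behind Theorem~\ref{thm:period-res-eis-theta}: the regularised period \eqref{eq:period-res-eis-theta} is expanded, through the orbit decomposition carried out in Sections~\ref{sec:peri-eisenst-seri} and~\ref{sec:computation}, into a sum of theta periods of the shape \eqref{eq:period-theta} attached to augmented quadratic spaces $Y_t$, with $t$ pinned down by the location $s_1$ of the residue; for $s_1 \neq s_0$ the resulting $t$ differs from $r$, and each term vanishes --- by Proposition~\ref{prop:FO-2-distinction}(1) when $t > r$, and because $s_1$ would then exceed the maximal pole $s_0$ of $\cP_{1,\psi}(\sigma,\chi)$ (Theorem~\ref{thm:Eis-pole-2-bound-on-LO}) when $t < r$. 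The main obstacle is precisely this bookkeeping: tracking how the index $t$ in the orbit-by-orbit computation is determined by $s_1$, carrying it out uniformly over the choice of $(Z,L)$ at the critical level, and ruling out a spurious surviving contribution in the ``$t$ too small'' regime.
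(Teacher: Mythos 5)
Your route is the paper's own: feed $\FO_{\psi}^Y (\sigma)=\dim Y+2r$ into Prop.~\ref{prop:FO-2-distinction} to produce a critical pair $(Z,L)$ that is distinguished together with non-distinction for all strictly larger pairs, check the hypotheses of Thm.~\ref{thm:distinction-2-eis-pole}, and read off the pole statement and parts (1), (2), (4) from Thm.~\ref{thm:distinction-2-eis-pole} and Thm.~\ref{thm:period-res-eis-theta}. That part is correct and matches the paper exactly (your detour through the last proposition of Sec.~\ref{sec:four-coeff-theta} to remove the constraint $\dim L'\le 1$ is harmless but unnecessary: part (1) of Prop.~\ref{prop:FO-2-distinction} is stated for arbitrary totally isotropic $L$).

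The issue is your treatment of part (3). You correctly notice that the corollary asserts non-distinction at $0<s_1\neq s_0$ for \emph{every} pair $(Z,L)$ with $\dim X-\dim Z+\dim L=r$, $\dim L\equiv r \pmod 2$, $\dim L\le 1$, not only for the distinguished pair, but the mechanism you propose to close this does not reflect the actual computation and one of its steps is false. There is no expansion into periods against $\Theta_{\psi,X,Y_t}$ with $t$ ``pinned down by $s_1$'', and the appeal to $s_1$ exceeding the maximal element of $\cP_{1,\psi}(\chi,\sigma)$ is unsound: $s_0$ here is attached to the first occurrence in \emph{one} Witt tower, which can strictly exceed $\LO_{\psi,\chi}(\sigma)$, so $s_0$ need not be the maximal pole (cf.\ Thm.~\ref{thm:strengthend-eis-pole-2-LO}) and an $s_1>s_0$ may genuinely lie in $\cP_{1,\psi}(\chi,\sigma)$. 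The correct, and much simpler, observation is that the distinction hypothesis on $(Z,L)$ is never used in the vanishing statements of the proof of Thm.~\ref{thm:period-res-eis-theta}: the regularisation identity $\int_{[J (Z_1,L)]}\cE_{s_1} (g,f_s)\overline {\theta_{\psi,X_1,Y} (g,1,\Phi)}dg=\res_{s=s_1}I (\xi_{c,s})$ holds for any critical pair; the orbit pieces attached to larger subgroups ($I_{\Omega_{1,0}}$, $I_{\Omega_{1,1}}$, and for $\dim L=1$ also $J_{\Omega_{0,1},1}$, $J_{\Omega_{0,1},3}$, $J_{\Omega_{1,1},1}$, $J_{\Omega_{1,1},2}$, $J_{\Omega_{1,1},3}$) vanish identically because every group occurring there satisfies $\dim X-\dim Z'+\dim L'<r$, so Prop.~\ref{prop:FO-2-distinction}(1) applies for any such pair; and the surviving piece $I_{\Omega_{0,1}} (\xi_{c,s})$ (resp.\ $J_{\Omega_{0,1},2} (\xi_{c,s})$) carries the explicit factor $c^{s-s_0}/(s-s_0)$ by Prop.~\ref{prop:I-Omega01-L-0} (resp.\ Prop.~\ref{prop:J-Omega01-2-L-1}), so its only possible pole is at $s=s_0$, a number depending only on $\dim Y$ and on the invariant $r=\dim X-\dim Z+\dim L$, hence the same for all critical pairs. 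Taking the residue at $s_1\neq s_0$ then kills \eqref{eq:period-res-eis-theta} uniformly in $(Z,L)$, which is part (3); with this replacement of your sketch, the argument is complete.
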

With these results on period integrals involving residues of Eisenstein series, we can strengthen
Thm.~\ref{thm:Eis-pole-2-bound-on-LO}.
\begin{thm}\label{thm:strengthend-eis-pole-2-LO}
    Let $\sigma\in\cA_\cusp (\til {G} (X))$ and let $s_0$ be the maximal element in $\cP_{1,\psi} (\chi,\sigma)$. Write
    $s_0=\half (\dim X +1)-j$. Then
  \begin{enumerate}
  \item $j$ is an integer such that $\frac{1}{4} (\dim X -2) \le j < \half (\dim X +1)$.
  \item $\LO_{\psi,\chi} (\sigma) = 2j+1$.
  \end{enumerate}
\end{thm}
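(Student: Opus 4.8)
The first part is already known: part (1) of Theorem~\ref{thm:Eis-pole-2-bound-on-LO} gives $s_0 = \half(\dim X+1)-j$ with $0\le j<\half(\dim X+1)$, and the lower bound $j\ge \tfrac14(\dim X-2)$ is exactly what part~(3) of that theorem (reformulated) yields, since $2j+1\ge r_X = \dim X/2$ rearranges to $j\ge \tfrac14(\dim X - 2)$. So the only real content is promoting the inequality $\LO_{\psi,\chi}(\sigma)\le 2j+1$ of Theorem~\ref{thm:Eis-pole-2-bound-on-LO}(2) to an \emph{equality}. The plan is to prove the reverse inequality $\LO_{\psi,\chi}(\sigma)\ge 2j+1$ using the finer machinery of Sec.~\ref{sec:peri-eisenst-seri}, in particular Corollary~\ref{cor:FO-2-distinction-res-eis-theta} together with Theorem~\ref{thm:period-res-eis-theta}, which detect \emph{exactly} the first occurrence index through non-vanishing of periods of residues of Eisenstein series rather than merely through the presence of a pole.

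The argument I would run goes as follows. Suppose, for contradiction, that $\LO_{\psi,\chi}(\sigma) = 2j'+1 < 2j+1$, i.e. $j' < j$. Pick an odd-dimensional anisotropic quadratic space $Y$ with $\chi_Y = \chi$ realizing this lowest occurrence, so $\FO_\psi^Y(\sigma) = \dim Y + 2r_0$ with $\dim Y + 2r_0 = 2j'+1$; in particular $\dim Y \le 2j'+1$ and $\dim Y$ is odd. Apply Corollary~\ref{cor:FO-2-distinction-res-eis-theta} with this $Y$: it produces a genuine pole of $E(g,s,f)$ at $s_Y := \half(\dim X - (\dim Y + 2r_0) + 2) = \half(\dim X - (2j'+1) + 2) = \half(\dim X + 1) - j'$. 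Since $j' < j$, we have $s_Y > s_0 = \half(\dim X+1) - j$. But $s_0$ was assumed to be the \emph{maximal} element of $\cP_{1,\psi}(\chi,\sigma)$, and (using $\chi_Y = \chi$) the pole $s_Y$ lies in $\cP_{1,\psi}(\chi,\sigma)$ — provided $s_Y > 0$, which holds because $s_Y \ge s_0 > 0$ as $s_0\in\cP_{1,\psi}$ is positive by definition. This contradicts maximality of $s_0$. Hence $\LO_{\psi,\chi}(\sigma)\ge 2j+1$, and combined with Theorem~\ref{thm:Eis-pole-2-bound-on-LO}(2) we get equality.

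The step that needs the most care — and where the genuinely new input of Sec.~\ref{sec:peri-eisenst-seri} enters — is verifying that Corollary~\ref{cor:FO-2-distinction-res-eis-theta} applies cleanly: one must check that the hypotheses of Theorem~\ref{thm:distinction-2-eis-pole} are met at the first occurrence, namely that $\sigma\otimes\overline{\Theta_{\psi,X,Y}}$ is $J(Z,L)$-distinguished with $\dim X - \dim Z + \dim L = r_0$, $\dim L\equiv r_0\pmod 2$, $\dim L\in\{0,1\}$, while being non-distinguished by all the larger $J(Z',L')$. This is precisely what Prop.~\ref{prop:FO-2-distinction} guarantees, but one has to track the parity condition $\dim L\equiv r_0\pmod 2$ and the fact that it forces $\dim L\in\{0,1\}$ to be consistent — this is the place where the lack of an ``odd symplectic group'' and the resulting appearance of Jacobi subgroups makes the metaplectic case genuinely different from the orthogonal case. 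Once that bookkeeping is in place, the contradiction argument above closes immediately. I would also remark that part~(1)'s lower bound can alternatively be re-derived a posteriori: once $\LO_{\psi,\chi}(\sigma) = 2j+1$ and $\LO_{\psi,\chi}(\sigma)\ge r_X = \dim X/2$ (which follows from the involutivity and stability of theta lifts, exactly as in the proof of Theorem~\ref{thm:Eis-pole-2-bound-on-LO}(3)), the bound $j\ge\tfrac14(\dim X-2)$ drops out, giving an internally consistent check.
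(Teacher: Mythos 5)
Your proof is correct and follows essentially the same route as the paper: part (1) is a restatement of Thm.~\ref{thm:Eis-pole-2-bound-on-LO}, and the equality in part (2) is obtained by taking $Y$ realising the lowest occurrence and feeding $\FO_\psi^Y(\sigma)=2j'+1$ into Cor.~\ref{cor:FO-2-distinction-res-eis-theta} to produce a pole at $\half(\dim X+1)-j'$, which the maximality of $s_0$ forces to satisfy $j'\ge j$. Your contradiction phrasing (which also guarantees the positivity $s_Y>s_0>0$ needed to place the pole in $\cP_{1,\psi}(\chi,\sigma)$) is only a cosmetic variant of the paper's direct argument.
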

\begin{proof}
  Part (1) is a restatement of Thm.~\ref{thm:Eis-pole-2-bound-on-LO} and we have also shown that $\LO_{\psi,\chi}
  (\sigma) \le 2j+1$ there.  We just need to show the other inequality. Assume that the lowest occurrence of $\sigma$ is
  achieved in the Witt tower of $Y$. Thus $\LO_{\psi,\chi} (\sigma) = \FO_\psi^Y (\sigma)$. We assume $\FO_\psi^Y
  (\sigma) = 2j' +1$. Then by Cor.~\ref{cor:FO-2-distinction-res-eis-theta}, $ \half (\dim X - (2j' +1) +2)$ is a
 member of $\cP_{1,\psi} (\sigma,\chi)$. Since $s_0=\half (\dim X +1)-j$ is the maximal member, we get $\half (\dim X
 +1)-j\ge\half (\dim X - (2j' +1) +2)$ or $j'\ge j$. We have shown that $\LO_{\psi,\chi} (\sigma)\ge 2j+1$.
\end{proof}

Now we proceed to prove the main theorems. We cut the period integral \eqref{eq:period-eis-theta} into several parts and evaluate each part. We proceed formally and  justify that each part is
absolutely convergent for $\Re s$ and $c$ large and that it has meromorphic continuation to all $s\in\CC$ at the end.

We have $E_{Q_1} (\gamma g , s, f) = f_s (g) + M (w,s)f_s (g)$ as continued meromorphic functions in $s$ where $M (w,s)$ is the
intertwining operator associated to the longest Weyl element $w$ in $Q_1 (F) \lmod G (X_1)(F) /Q_1(F)$. When $\Re
(s)>\rho_{Q_1}$, we get
\begin{equation*}
  \Lambda^c E (g,s,f) = \sum_{\gamma \in Q_1 \lmod G (X_1)} f_s (\gamma g) \hat{\tau}_c (H(\gamma g)) - \sum_{\gamma \in Q_1 \lmod
                                                                                                          G (X_1)} M (w,s)f_s (\gamma g) \hat {\tau}^c (H (\gamma g)).
\end{equation*}
The first series is absolutely convergent for $\Re
(s)>\rho_{Q_1}$ while the second series has only finitely many non-vanishing terms for each fixed $g$. Both series
have meromorphic continuation to the whole complex plane. Set
\begin{align*}
  \xi_{c,s} (g) &= f_s (g) \overline {\theta_{X_1,Y} (g,1,\Phi)}\hat {\tau}_c (H (g)) ;\\
  \xi_s^c (g) &= M (w,s) f_s (g) \overline {\theta_{X_1,Y} (g,1,\Phi)}\hat {\tau}^c (H (g))
\end{align*}
and set
\begin{equation*}
  I (\xi) = \int_{[J (Z_1,L)]} \sum_{\gamma \in Q_1 \lmod G (X_1)} \xi (\gamma g) dg
\end{equation*}
for $\xi = \xi_{c,s}$ or $\xi_s^c$. Then the period integral we are concerned with is equal to
\begin{equation*}
  \int_{[J (Z_1,L)]} \Lambda^c E (g,s,f)\overline {\theta_{X_1,Y} (g,1,\Phi)}dg = I (\xi_{c,s}) - I (\xi_s^c)
\end{equation*}
as long as the integrals defining the two terms on  right hand side are absolutely convergent. 

Now we cut $I (\xi)$ into
several parts according to $J (Z_1,L)$-orbits in $Q_1 \lmod G (X_1)$, show that the parts are  absolute convergent and compute their values.
The set $Q_1 \lmod G (X_1)$ corresponds to isotropic lines in $X_1$:
\begin{align*}
  Q_1 \lmod G (X_1) &\leftrightarrow \{\text {Isotropic lines in $X_1$}\} \\
  \gamma &\leftrightarrow \ell_1^- \gamma.
\end{align*}
For an isotropic line $\ell\in X_1$, we write $\gamma_\ell$ for an element in $G (X_1)$ such that $\ell_1^- \gamma_\ell
=\ell$. We will exercise our freedom to choose $\gamma_\ell$ as simple as possible.

First we consider the $G (Z_1)$-orbits in $Q_1 \lmod G (X_1)$.  If $L$ is non-trivial, then we will further consider the $J (Z_1,L)$-orbits. Given an isotropic line in $X_1$, we pick a non-zero vector $x$ on the line and write
$x=v+z$ according to the decomposition $X_1 = V \perp Z_1$. There are three cases, so we define:
\begin{enumerate}
\item $\Omega_{0,1}$ to be the set of (isotropic) lines in $X_1$ whose projection to $V$ is $0$;
\item $\Omega_{1,0}$ to be the set of (isotropic) lines in $X_1$ whose projection to $Z_1$ is $0$;
\item $\Omega_{1,1}$ to be the set of (isotropic) lines in $X_1$ whose projections to both $V$ and $Z_1$ are non-zero.
\end{enumerate}
Each set is stable under the action of $G (Z_1)$. The set $\Omega_{0,1}$ forms one $G (Z_1)$-orbit. We may take
$Fe_1^-=\ell_1^-$ as the orbit representative. Its stabiliser in $G (Z_1)$ is the parabolic subgroup $Q (Z_1,Fe_1^-)$
that stabilises $Fe_1^-$. The set $\Omega_{1,0}$ is acted on by $G (Z_1)$ trivially. A set of orbit representatives is
$(V-\{0\})/F^\times$ and for each representative the stabiliser is $G (Z_1)$. Finally consider the set $\Omega_{1,1}$. A
line $F (v+z)$ in $\Omega_{1,1}$ is in the same $G (Z_1)$-orbit as $F (v+e_1^-)$. For $v_1,v_2\in V - \{0\}$, $F
(v_1+e_1^-)$ and $F(v_2+e_1^-)$ are in the same orbit, if and only if there exist $\gamma\in G (Z_1)$ and $c\in
F^\times$ such that $v_1 + e_1^-\gamma = c (v_2 + e_1^-)$. Thus a set of orbit representatives is $F (v+e_1^-)$ for $v$
running over a set of representatives of $(V-\{0\})/F^\times$. For each orbit representative, the stabiliser is $J
(Z_1,Fe_1^-)$.

Thus we cut the series over $Q_1\lmod G (X_1)$ into three parts and we get
\begin{equation*}
  I (\xi) = I_{\Omega_{0,1}} (\xi) + I_{\Omega_{1,0}} (\xi) + I_{\Omega_{1,1}} (\xi)
\end{equation*}
with
\begin{align}\label{eq:I-Omegas}
  I_{\Omega_{0,1}} (\xi) &= \int_{[J (Z_1,L)]} \sum_{\delta\in Q (Z_1,Fe_1^-) \lmod G (Z_1)} \xi (\delta g) dg \\
\nonumber  I_{\Omega_{1,0}} (\xi)&= \int_{[J (Z_1,L)]}  \sum_{v\in (V-\{0\})/F^\times} \xi (\gamma_{F v} g) dg \\
\nonumber  I_{\Omega_{1,1}} (\xi) &= \int_{[J (Z_1,L)]} \sum_{\delta \in J(Z_1,Fe_1^-) \lmod G (Z_1)} \sum_{v\in (V-\{0\})/F^\times} \xi (\gamma_{F (v+e_1^-)}\delta g) dg.
\end{align}

\begin{prop}\label{prop:abs-conv}
  For $\xi=\xi_{c,s}$ or $\xi_s^c$, the integrals $I_{\Omega_{0,1}} (\xi)$, $I_{\Omega_{1,0}} (\xi)$ and $I_{\Omega_{1,1}} (\xi)$ are absolutely convergent for $\Re s$ and $c$
large enough and  each has meromorphic continuation to the whole complex plane.
\end{prop}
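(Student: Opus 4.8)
The plan is to reduce the statement to a bound on absolutely convergent integrals, then unfold each of the three orbit-integrals in \eqref{eq:I-Omegas}, estimate the unfolded integrands, and finally identify each piece with a standard object whose meromorphic continuation is already known. Since the cutoff functions $\hat\tau_c,\hat\tau^c$ take values in $\{0,1\}$ and the remaining factors of $\xi_{c,s}$ and $\xi_s^c$ have well-defined absolute values, Tonelli lets us move absolute values inside; so it suffices to bound $\int_{[J(Z_1,L)]}\sum'|\xi(\gamma g)|\,dg$ for each of the three subsums and for $\Re s$ and $c$ large. For $\xi=\xi_{c,s}$ the relevant inner sum is $\sum_\gamma|f_s(\gamma g)|\hat\tau_c(H(\gamma g))$, a truncated version of the Eisenstein sum which converges absolutely once $\Re s>\rho_{Q_1}$; for $\xi=\xi_s^c$ only finitely many terms are nonzero for each fixed $g$ because of $\hat\tau^c$, so there the only issue is convergence of the outer integral over $[J(Z_1,L)]$.

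Next I would unfold. For $\Omega_{1,0}$ the stabiliser in $J(Z_1,L)$ of each representative $Fv$ is all of $J(Z_1,L)$ (the group acts trivially on $V$), so no unfolding takes place; one chooses $\gamma_{Fv}$ inside a convenient parabolic of $G(X_1)$, applies the Iwasawa decomposition on $\til G(X_1)(\A)$ to split off the $\fa_{M_1}\cong\RR$ direction, and bounds the $g$-integral directly, the sum over $v\in(V-\{0\})/F^\times$ being dominated by the rapid decay of the Schwartz function $\Phi$ entering $\theta_{\psi,X_1,Y}$ (its contribution is a theta-type series in the $V$-variable). For $\Omega_{0,1}$ one first refines $Q(Z_1,Fe_1^-)\lmod G(Z_1)$ into $J(Z_1,L)(F)$-orbits (this is trivial when $L=0$), and for each orbit representative $\delta$ with stabiliser $\Stab$ one unfolds $\int_{[J(Z_1,L)]}\sum_{\Stab(F)\lmod J(Z_1,L)(F)}$ into $\int_{\Stab(F)\lmod J(Z_1,L)(\A)}$; the $\Omega_{1,1}$ term is treated the same way, its double sum becoming an integral over the product of such a parabolic-type quotient with the $V$-sum. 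In every case the $\RR$-direction carries the weight $|\det x|_\A^{s+\rho_{Q_1}}$ for the $\xi_{c,s}$-terms, cut to $|\det x|_\A\le c$ by $\hat\tau_c$, and — after inserting the intertwining operator $M(w,s)$ — essentially the weight $|\det x|_\A^{-s+\rho_{Q_1}}$ for the $\xi_s^c$-terms, cut to $|\det x|_\A>c$ by $\hat\tau^c$; in both regimes the weight is integrable once $\Re s$ is large, while on the reductive and theta directions one combines the rapid decay of $\phi\in\sigma$ coming from cuspidality, the moderate growth of $\theta_{\psi,X_1,Y}$, and the finiteness of the relevant volumes.

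For the meromorphic continuation, each unfolded piece, for $\Re s\gg 0$, is a standard convergent integral: an incomplete-Eisenstein/partial-theta integral paired with the section $f$, or the same object with the meromorphically continued $M(w,s)$ inserted. Its continuation to all $s\in\CC$ then follows from that of the intertwining operator, of $E(g,s,f)$ itself, and of the relevant inner constant-term integrals; alternatively one may run the continuation directly, as in the symplectic case \cite{MR3805648}. The main obstacle is the $\Omega_{1,1}$ term: the mixed isotropic lines (nonzero projection to both $V$ and $Z_1$) force a more delicate unfolding in which the theta kernel must be handled through its transformation under the Heisenberg/Jacobi part of $J(Z_1,L)$ — equivalently, through the decay of $\Phi$ in the $V$-direction together with its $G(Z_1)$-equivariance — and one must keep the estimates uniform in $c$ and locally uniform in $s$ so that the truncation parameter can be removed later. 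Checking that the $V$-sum, the unipotent integrations, and the opposite cutoffs $\hat\tau_c$ versus $\hat\tau^c$ interact compatibly, without destroying absolute convergence, is where the real work lies; the rest is bookkeeping modelled on \cite{MR3805648}, made possible here because the metaplectic cover splits over all the unipotent subgroups involved.
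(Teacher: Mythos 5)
Your proposal is correct in outline and follows essentially the same route as the paper, whose proof simply notes that the convergence estimates of Sec.~5 of \cite{MR3805648} carry over to the metaplectic case (absolute values remove all genuineness issues) and that the meromorphic continuation follows from the orbit-by-orbit computations of Sec.~\ref{sec:computation} — i.e.\ your unfold, estimate, and identify strategy. The one calibration worth making is that the continuation is obtained from the explicit closed-form evaluations (elementary factors such as $c^{s-s_0}/(s-s_0)$ times integrals entire in $s$, with $M(w,s)$ supplying the only non-elementary meromorphy), which is your ``run the continuation directly'' alternative rather than an appeal to the continuation of $E(g,s,f)$ or its constant terms.
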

\begin{proof}
  The whole Sec.~5 of \cite{MR3805648} deals with bounds and convergence issues for the symplectic group case. It
  carries over to the metaplectic case. Meromorphic continuation follows from computation in Sec.~\ref{sec:computation}.
\end{proof}

Thus we are free to change the order of summation and integration when we evaluate the integrals. By using the results in Sec.~\ref{sec:computation} which is dedicated to evaluating the integrals, we can now prove Theorems.~\ref{thm:distinction-2-eis-pole}, ~\ref{thm:period-res-eis-theta}.

\begin{proof}[Proof of Thm.~\ref{thm:distinction-2-eis-pole}]
 First  assume that $\dim L = 0$. Then the non-distinction conditions for groups `larger than' $G (Z)$ in the assumption
 of the theorem mean that the conditions of  Propositions~\ref{prop:I-Omega10-L-0} and
 \ref{prop:I-Omega11-L-0} are satisfied. Thus the propositions show that $I_{\Omega_{1,0}} (\xi)$ and $I_{\Omega_{1,1}}
 (\xi)$ vanish. 

 Assume that $M (w,s)$ does not have a pole at $s=s_0$. Then by \eqref{eq:I-Omega01-L0-xi-upper-c},
 $I_{\Omega_{0,1}} (\xi_s^c)$ does not have a pole at $s=s_0$. By part (3) of Prop.~\ref{prop:I-Omega01-L-0}, the assumption of
 distinction by $G (Z)$ shows that $I_{\Omega_{0,1}} (\xi_{c,s})$ has a pole at $s=s_0$. This means that $\Lambda^c E
 (g,s,f)$ and as a result $E (g,s,f)$ has a pole at $s=s_0$. We get a contradiction. Thus $M (w,s)$ must have a pole at
 $s=s_0$, which implies that $E (g,s,f)$ has a pole at $s=s_0$.

 Next assume that $\dim L=1$. We note that in Sec.~\ref{sec:case-dim-l-1}, $I_{\Omega_{0,1}} (\xi)$ is further cut into
 3 parts: $J_{\Omega_{0,1},1} (\xi)$, $J_{\Omega_{0,1},2} (\xi)$ and $J_{\Omega_{0,1},3} (\xi)$ and that
 $I_{\Omega_{1,1}} (\xi)$ is further cut into 3 parts: $J_{\Omega_{1,1},1} (\xi)$, $J_{\Omega_{1,1},2} (\xi)$ and
 $J_{\Omega_{1,1},3} (\xi)$.  The non-distinction conditions for groups `larger than' $J(Z,L)$ in the assumption of the
 theorem mean that the conditions of Propositions~\ref{prop:J-Omega01-1-L-1}, \ref{prop:J-Omega01-3-L-1},
 \ref{prop:I-Omega10-L-1}, \ref{prop:J-Omega11-1-L-1}, \ref{prop:J-Omega11-2-L-1}, \ref{prop:J-Omega11-3-L-1} are
 satisfied.  Thus $J_{\Omega_{0,1},1} (\xi)$, $J_{\Omega_{0,1},3} (\xi)$, $I_{\Omega_{1,0}} (\xi)$,
 $J_{\Omega_{1,1},1} (\xi)$, $J_{\Omega_{1,1},2} (\xi)$ and $J_{\Omega_{1,1},3} (\xi)$ all vanish. It remains to
 consider $J_{\Omega_{0,1},2} (\xi)$ which is defined in \eqref{eq:J-Omega01-2}.

 Assume that $M (w,s)$ does not have a pole at $s=s_0$, then by \eqref{eq:J-Omega01-2-L1-xi-upper-c},
 $J_{\Omega_{0,1},2} (\xi_s^c)$ does not have a pole at $s=s_0$. By part (3) of Prop.~\ref{prop:J-Omega01-2-L-1}, the
 assumption of distinction by $J (Z,L)$ shows that $J_{\Omega_{0,1},2} (\xi_{c,s})$ has a pole at $s=s_0$. This means that
 $\Lambda^c E (g,s,f)$ and as a result $E (g,s,f)$ has a pole at $s=s_0$. We get a contradiction. Thus $M (w,s)$ must
 have a pole at $s=s_0$, which implies that $E (g,s,f)$ has a pole at $s=s_0$.
\end{proof}
\begin{proof}[Proof of Thm.~\ref{thm:period-res-eis-theta}]
  Let $s_1>0$. 
  We show that the integral \eqref{eq:period-res-eis-theta} is absolutely convergent and we then check if it vanishes  or not.

  We take residue of the second term in \eqref{eq:truncation} and set
  \begin{align*}
    \theta^c (g) &= \sum_{Q_1 \lmod G (X_1)} \cE_{s_1,Q_1} (\gamma g, f_s)\hat {\tau}^c (H (\gamma g))\\
    &=\sum_{Q_1 \lmod G (X_1)} \res_{s=s_1} (M (w,s)f_s (\gamma g)) \hat {\tau}^c (H (\gamma g)).
  \end{align*}
  Note that this is a finite sum for fixed $g$.
  Then the truncated residue is
  \begin{equation*}
    \Lambda^c \cE_{s_1} (g,f_s) = \cE_{s_1} (g,f_s) - \theta^c (g).
  \end{equation*}
  The integral
  \begin{equation*}
    \int_{[J (Z_1,L)]}  \theta^c (g)\overline {\theta_{X_1,Y} (g,1,\Phi)} dg 
  \end{equation*}
  is absolutely convergent when $\Re (s_1)$ and $c$ are large and has meromorphic continuation to all $s_1\in\CC$ by
  Prop.~\ref{prop:abs-conv}. It is equal to $\res_{s=s_1}I (\xi_s^c)$. Thus
  \begin{align*}
    &\phantom{=}\int_{[J (Z_1,L)]} \cE_{s_1} (g,f_s)\overline {\theta_{X_1,Y} (g,1,\Phi)} dg\\
    &=\int_{[J (Z_1,L)]} (\Lambda^c \cE_{s_1} (g,f_s) + \theta^c (g))\overline {\theta_{X_1,Y} (g,1,\Phi)} dg \\
    &=\res_{s=s_1} (I (\xi_{c,s}) - I (\xi_s^c)) + \res_{s=s_1} I (\xi_s^c)\\
    &=\res_{s=s_1}I (\xi_{c,s}).
  \end{align*}
  In addition, the first equality shows that \eqref{eq:period-res-eis-theta} is absolutely convergent.

  Now if $s_1=s_0$, then by Prop.~\ref{prop:I-Omega01-L-0} for $\dim L=0$ and Prop.~\ref{prop:J-Omega01-2-L-1} for
  $\dim L=1$, \eqref{eq:period-res-eis-theta} is non-vanishing for some choice of data. If $s_1\neq s_0$ or if we
  integrate over $[J (Z''_1,L'')]$ with $(Z'',L'')$ `larger than' $(Z,L)$, again by Prop.~\ref{prop:I-Omega01-L-0} for
  $\dim L=0$ and Prop.~\ref{prop:J-Omega01-2-L-1} for $\dim L=1$, \eqref{eq:period-res-eis-theta} always vanishes.
\end{proof}

\section{Computation of integrals}
\label{sec:computation}
Set $r=\dim X - \dim Z +\dim L$ and $s_0 = (\dim X - (\dim Y +2r ) +2)/2$. The goal of this section is to compute
$I_{\Omega_{0,1}} (\xi)$, $I_{\Omega_{1,0}} (\xi)$ and  $I_{\Omega_{1,1}} (\xi)$ defined in \eqref{eq:I-Omegas}.

\subsection{Case $\dim L =0$}
\label{sec:case-dim-l-0}
We note that since $\dim L=0$, $J (Z_1,L)$ is simply the symplectic group $G (Z_1)$.

\subsubsection{$I_{\Omega_{0,1}} (\xi)$}
\label{sec:Omega01}
We collapse the integral and series. We get
\begin{align*}
 I_{\Omega_{0,1}} (\xi)& = \int_{Q (Z_1,Fe_1^-) (F)\lmod G (Z_1) (\AA)} \xi (g) dg \\
 & = \int_{K_{G (Z_1)}} \int_{[Q (Z_1,Fe_1^-)]} \xi (qk) dq dk\\
  &= \int_{K_{G (Z_1)}} \int_{[G (Z_1)]}\int_{[\GL_1]}\int_{[N (Z_1,Fe_1^-)]} \xi (nm_1 (t)h k)|t|_\AA^{-2\rho_{Q
  (X_1,Fe_1^-)}} dn dt dh dk.
\end{align*}
We note that $\xi$ is the product of two genuine factors, so it is not genuine itself. Let $\xi=\xi_{c,s}$. Then 
\begin{align*}
  \xi (nm_1 (t)h k) &= f_s (nm_1 (t)h k)\overline {\theta_{X_1,Y} (nm_1 (t)h k,1,\Phi)}\hat {\tau}_c (H (nm_1 (t)h k)) \\
  &=\chi_Y\chi_\psi (t)|t|_\AA^{s+\rho_{Q_1}} f_s (hk) \overline {\theta_{X_1,Y} (nm_1 (t)h k,1,\Phi)}\hat {\tau}_c (H
  (m_1 (t))) .
\end{align*}
We integrate over $n\in [N (Z_1,Fe_1^-)]$ first. The only term involving $n$ is $\theta_{X_1,Y}$. If we realise the Weil
representation on the mixed model $\cS ((Y\otimes \ell_1^+ \oplus (Y\otimes X)^+) (\AA))$, then using explicit formulae for the
Weil representation and noting that $Y$ is anisotropic we get
\begin{align*}
  &\phantom{=} \int_{[N (Z_1,Fe_1^-)]}  \theta_{X_1,Y} (nm_1 (t)h k,1,\Phi) dn \\
  &= \int_{[\Hom (\ell_1^+,Z)]} \sum_{w\in (Y\otimes X)^+ (F)} \omega_{X_1,Y} (n (\mu,0)m_1 (t)hk,1) \Phi (0,w) d\mu \\
  &= \sum_{w\in (Y\otimes X)^+ (F)} \omega_{X_1,Y} (m_1 (t)hk,1) \Phi (0,w) \\
  &= \chi_Y\chi_\psi (t)|t|_\AA^{\dim Y /2} \theta_{X,Y} (h,1,\Phi_k).
\end{align*}
Here $\Phi_k (\cdot) = \omega_{X_1,Y} (k,1)\Phi (0,\cdot)$ and for $\mu\in \Hom (\ell_1^+,Z)$, $n (\mu,0)$ denote the
unipotent element in $N (Z_1,Fe_1^-)$ that is characterised by the condition that the $X$-component of the image of
$e_1^+$ under $n (\mu,0)$ is $\mu (e_1^+)$. Next we
integrate over $t\in [\GL_1]$. More precisely, in the following expression, $t$ should be viewed as any element in the pre-image of
$t$ in $\til {\GL}_1 (\AA)$, but the choice does not matter. Excluding terms not involving $t$, we get
\begin{align*}
  &\phantom{=} \int_{[\GL_1]} \chi_Y\chi_\psi (t)|t|_\AA^{s+\rho_{Q_1}}\overline {\chi_Y\chi_\psi (t)|t|_\AA^{\dim Y /2}} \hat {\tau}_c (H
  (m_1 (t))) |t|_\AA^{-2\rho_{Q  (X_1,Fe_1^-)}} dt\\
&  =\vol (F^\times \lmod \AA^1) \int_0^c t^{s-s_0} d^\times t.
\end{align*}
For $\Re s > s_0$, we get
\begin{equation*}
  I_{\Omega_{0,1}} (\xi_{c,s}) = \vol (F^\times \lmod \AA^1) \frac{c^{s-s_0}}{s-s_0} \int_{K_{G (Z_1)}} \int_{[G (Z)]}
  f_s (hk)\overline {\theta_{X,Y} (h,1,\Phi_k)} dh dk.
\end{equation*}
This expression provides the meromorphic continuation of $I_{\Omega_{0,1}} (\xi)$ as a function in $s$.

Next let $\xi=\xi^c_s$. The computation is analogous and we get for $\Re s > -s_0$,
\begin{equation}\label{eq:I-Omega01-L0-xi-upper-c}
   I_{\Omega_{0,1}} (\xi_s^c) = \vol (F^\times \lmod \AA^1) \frac{c^{-s-s_0}}{s+s_0} \int_{K_{G (Z_1)}} \int_{[G (Z)]}
  M (w,s) f_s (hk)\overline {\theta_{X,Y} (h,1,\Phi_k)} dh dk.
\end{equation}

With this computation, we get immediately the first two statements of the next proposition:
\begin{prop}\label{prop:I-Omega01-L-0}
  \begin{enumerate}
  \item If $\sigma\otimes\overline {\Theta_{X,Y}}$ is not $G (Z)$-distinguished, then both $I_{\Omega_{0,1}}
    (\xi_{c,s})$ and $I_{\Omega_{0,1}} (\xi^c_s)$ vanish identically.
  \item $I_{\Omega_{0,1}} (\xi_{c,s})$ does not have a pole at $s\neq s_0$.
  \item If $\sigma\otimes\overline {\Theta_{X,Y}}$ is  $G (Z)$-distinguished, then the residue
    \begin{equation}\label{eq:residue-of-I-Omega01}
      \vol (F^\times \lmod \AA^1)  \int_{K_{G (Z_1)}} \int_{[G (Z)]}
  f_s (hk)\overline {\theta_{X,Y} (h,1,\Phi_k)} dh dk
    \end{equation}
of $I_{\Omega_{0,1}}    (\xi_{c,s})$ at $s=s_0$ does not vanish for some choice of data.
  \end{enumerate}
\end{prop}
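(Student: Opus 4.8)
The two explicit formulae for $I_{\Omega_{0,1}}(\xi_{c,s})$ and $I_{\Omega_{0,1}}(\xi_s^c)$ just obtained drive all three parts; the plan is to read off (1) and (2) from them and, for (3), to produce one good choice of data. For (1), note that for each fixed $k$ the inner integral $\int_{[G(Z)]}f_s(hk)\overline{\theta_{X,Y}(h,1,\Phi_k)}\,dh$ is exactly the $G(Z)$-period of $\sigma\otimes\overline{\Theta_{X,Y}}$ evaluated on the pair $\bigl(h\mapsto f_s(hk),\ \theta_{X,Y}(\cdot,1,\Phi_k)\bigr)$: the first entry lies in $\sigma$ by property~(3) of $\cA_{1,\psi}(s,\chi,\sigma)$ --- and no twist intervenes, since $H_1$ vanishes on $G(Z)(\AA)$ and on $K_{G(Z_1)}$, so that $f_s(hk)=f(hk)$ --- while the second lies in $\Theta_{X,Y}$; for $\xi_s^c$ one replaces $f_s$ by $M(w,s)f_s$, whose restriction to the $G(X)$-factor of $M_1$ still takes values in $\sigma$ because the longest element $w$ of $Q_1\lmod G(X_1)/Q_1$ centralises that factor. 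Hence if $\sigma\otimes\overline{\Theta_{X,Y}}$ is not $G(Z)$-distinguished, both inner integrals vanish for all $k$ and all data, so $I_{\Omega_{0,1}}(\xi_{c,s})$ and $I_{\Omega_{0,1}}(\xi_s^c)$ vanish for $\Re s$ large and hence identically by meromorphic continuation. For (2), the same vanishing of $H_1$ makes the double integral in the formula for $I_{\Omega_{0,1}}(\xi_{c,s})$ independent of $s$; calling it $C(f,\Phi)$, we have $I_{\Omega_{0,1}}(\xi_{c,s})=\vol(F^\times\lmod\AA^1)\,\frac{c^{s-s_0}}{s-s_0}\,C(f,\Phi)$, whose only possible pole is the simple pole at $s=s_0$, with residue $\vol(F^\times\lmod\AA^1)\,C(f,\Phi)$, i.e.\ \eqref{eq:residue-of-I-Omega01}.

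For (3) I would argue by contraposition: if $C(f,\Phi)$ vanished for every section $f$ and every $\Phi$, then $\sigma\otimes\overline{\Theta_{X,Y}}$ would fail to be $G(Z)$-distinguished. The freedom available is the full freedom in the section: prescribing $f$ amounts to prescribing an arbitrary $\til K_1$-finite function $\beta\colon\til K_1\to\sigma$ that transforms suitably under $\til K_1\cap Q_1$, and for $h\in G(X)(\AA)$ and $k\in\til K_1$ one has $f(hk)=\beta(k)(h)$ with $\beta(k)\in\sigma$. Choosing $\Phi=\Phi'\otimes\Phi_0$ in the mixed model with $\Phi'(0)=1$, so that $\Phi(0,\cdot)=\Phi_0$ and the value at $k=1$ of the $K_{G(Z_1)}$-integrand is $\int_{[G(Z)]}\beta(1)(h)\overline{\theta_{X,Y}(h,1,\Phi_0)}\,dh$, and letting $\beta$ run over data concentrated, in the $\til K_1\cap Q_1$-equivariant sense, near the identity coset with prescribed value $\phi_0\in\sigma$ there, the $K_{G(Z_1)}$-integral defining $C(f,\Phi)$ collapses --- by continuity of $k\mapsto(\beta(k)|_{G(Z)},\Phi_k)$ --- to a nonzero multiple of $\int_{[G(Z)]}\phi_0(h)\overline{\theta_{X,Y}(h,1,\Phi_0)}\,dh$, which is nonzero for suitable $\phi_0,\Phi_0$ by the distinction hypothesis.

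The main obstacle is making ``concentrated near the identity coset'' precise within the $\til K_1$-finite category. At the non-archimedean places this is immediate (take $\beta$ right-invariant under, and supported on, a small open subgroup); at the archimedean places one must replace a genuine bump function by a $\til K_1$-finite approximate identity (Peter--Weyl), and one must carry the $\til K_1\cap Q_1$-equivariance, which permits localising only to the identity coset and not to the point $k=1$ --- so one uses continuity of the $K_{G(Z_1)}$-integrand together with the fact that this coset contains $k=1$. Running alongside is the routine but genuinely metaplectic bookkeeping --- the $\mu_2$-cocycle and the genuine character $\chi\chi_\psi$ --- needed to verify that these $\beta$ indeed come from sections in $\cA_{1,\psi}(s_0,\chi,\sigma)$; all of this runs parallel to the symplectic treatment in \cite{MR3805648}.
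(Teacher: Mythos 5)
Your proposal is correct and follows essentially the same route as the paper: parts (1) and (2) are read off from the explicit formulas for $I_{\Omega_{0,1}}(\xi_{c,s})$ and $I_{\Omega_{0,1}}(\xi_s^c)$, and part (3) is proved by starting from a pair $(\phi_0,\Phi_0)$ with non-vanishing $G(Z)$-period and constructing a section and a Schwartz function in the mixed model that ``extend'' these data so that the $K_{G(Z_1)}$-integral does not destroy the non-vanishing, exactly as the paper does by deferring to \cite[Prop.~4.1]{MR3805648}. Your extra discussion of the $\til K_1$-finite approximate-identity issue at archimedean places is precisely the technical point the cited symplectic argument handles, so no genuinely different idea is involved.
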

\begin{proof}
  The proof of (3) is similar to that in \cite [Prop.~4.1] {MR3805648}. We start with some choice of data such that
  \begin{equation*}
    \int_{[G (Z)]} \phi (h)\overline {\theta_{X,Y} (h,1,\Psi)} dh \neq 0.
  \end{equation*}
Then we can construct a section $f_s\in \cA_1 (s,\chi,\sigma)$ that `extends' $\phi$ and a Schwartz function $\Phi\in\cS
((Y\otimes Fe_1^+ \oplus (Y\otimes X)^+ ) (\AA)) $ that `extends' $\Psi$ such that \eqref{eq:residue-of-I-Omega01} is non-vanishing.
\end{proof}

\subsubsection{$I_{\Omega_{1,0}} (\xi)$}
\label{sec:Omega10}

For each $v\in V-\{0\}$, we look at the $v$-summand of $I_{\Omega_{1,0}} (\xi)$:
\begin{equation}\label{eq:summand-I-Omega10}
  \int_{[G (Z_1)]} \xi (\gamma_{Fv} g) dg.
\end{equation}
Take $v^+$ in $V$ such that $\form {v^+} {v}_V=1$. We take $\gamma_{Fv}$ to be the element in $G (X_1) (F)$ that is
determined by  $e_1^+\leftrightarrow v^+$ and $e_1^-\leftrightarrow v$ on the span of $e_1^+, e_1^-,v^+,v$ and  the identity
action on the orthogonal complement. Then $\gamma_{Fv}G (Z_1)\gamma_{Fv}^{-1} = G (Fv^+\oplus Z \oplus Fv)$. Thus
\eqref{eq:summand-I-Omega10} is equal to
\begin{equation*}
  \int_{[G (Fv^+\oplus Z \oplus Fv)]} \xi ( g\gamma_{Fv}) dg.
\end{equation*}
This is a period integral on $\sigma\otimes\overline {\Theta_{X,Y}}$ over the subgroup $G
(Fv^+\oplus Z \oplus Fv)$ of $G (X)$. Thus we get
\begin{prop}\label{prop:I-Omega10-L-0}
  If $\sigma\otimes\overline {\Theta_{X,Y}}$ is not $G (Z')$-distinguished for all $Z' \supset Z$ with $\dim Z' = \dim Z +2$,
  then $I_{\Omega_{1,0}} (\xi)$ vanishes for $\xi = \xi_{c,s}$ and $\xi_s^c$.
\end{prop}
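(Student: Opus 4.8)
The plan is to reduce the proposition to the non-distinction hypothesis via the $G(Z_1)$-orbit decomposition of $\Omega_{1,0}$ recorded just above. By \eqref{eq:I-Omegas},
\[
  I_{\Omega_{1,0}}(\xi) = \int_{[G(Z_1)]} \sum_{v\in(V-\{0\})/F^\times} \xi(\gamma_{Fv}g)\,dg,
\]
and by Prop.~\ref{prop:abs-conv} this is absolutely convergent for $\Re s$ and $c$ large and has meromorphic continuation to all $s$; so it suffices to show that each $v$-summand $\int_{[G(Z_1)]}\xi(\gamma_{Fv}g)\,dg$ vanishes, for then so does their sum, and the vanishing propagates to all $s$ by meromorphic continuation. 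This reduction is the routine part; the content is the identification of the individual summands.

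First I would, for fixed $v$, choose $v^+\in V$ with $\form{v^+}{v}_V=1$ and the element $\gamma_{Fv}$ as above, so that conjugation by the rational element $\gamma_{Fv}$ carries $G(Z_1)$ isomorphically onto $G(Z')$, where $Z':=Fv^+\oplus Z\oplus Fv$ is a non-degenerate symplectic subspace of $X$ with $Z\subset Z'$ and $\dim Z'=\dim Z+2$; this rewrites the $v$-summand as $\int_{[G(Z')]}\xi(g\gamma_{Fv})\,dg$, exactly as recorded before the statement. Next I would identify the restricted integrand along the lines of the computation of $I_{\Omega_{0,1}}(\xi)$ in Sec.~\ref{sec:Omega01}. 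Since $G(Z')\subset G(X)$ lies in the Levi $M_1$ with trivial $\GL_1$-component, one has $H(g\gamma_{Fv})=H(\gamma_{Fv})$ for all $g\in G(Z')(\AA)$, so the truncation factor $\hat\tau_c(H(g\gamma_{Fv}))$ (resp. $\hat\tau^c(H(g\gamma_{Fv}))$) is a constant that may be pulled out. Writing a fixed Iwasawa decomposition of $\gamma_{Fv}$ relative to $Q_1$ and using the left transformation law of $f_s$ along $N_1M_1$, the function $g\mapsto f_s(g\gamma_{Fv})$ on $G(Z')(\AA)$ is, up to the genuine character $\chi_Y\chi_\psi$ and a $|\det|^{s+\rho_{Q_1}}$-factor, the restriction to $G(Z')(\AA)$ of a vector in $\sigma$. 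And from the equivariance $\theta_{X_1,Y}(g\gamma_{Fv},1,\Phi)=\theta_{X_1,Y}(g,1,\omega_{X_1,Y}(\gamma_{Fv},1)\Phi)$ together with a polarisation of $Y\otimes X_1$ adapted to $X_1=\ell_1^+\oplus X\oplus\ell_1^-$, the restriction of $\theta_{X_1,Y}(\cdot\,\gamma_{Fv},1,\Phi)$ to $G(X)(\AA)$ is a sum $\sum_u\theta_{X,Y}(\cdot,1,\Psi_u)$ with $\Psi_u\in\cS_{X,Y}(\AA)$. Combining these (the product of the two genuine factors descends to $G(X)(\AA)$, hence to $G(Z')(\AA)$), the $v$-summand becomes a constant multiple of a sum of integrals $\int_{[G(Z')]}\phi(g)\overline{\theta_{X,Y}(g,1,\Psi_u)}\,dg$ with $\phi\in\sigma$, i.e., of the periods that test $G(Z')$-distinction of $\sigma\otimes\overline{\Theta_{X,Y}}$.

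To finish: every $Z'=Fv^+\oplus Z\oplus Fv$ arising this way contains $Z$ and has $\dim Z'=\dim Z+2$, so by hypothesis $\sigma\otimes\overline{\Theta_{X,Y}}$ is not $G(Z')$-distinguished; hence each integral above vanishes for every choice of data, each $v$-summand vanishes, and therefore $I_{\Omega_{1,0}}(\xi)\equiv 0$. For $\xi=\xi_s^c$ the argument is verbatim with $f_s$ replaced by $M(w,s)f_s$, since the Weyl element $w$ defining the intertwining operator acts trivially on the $G(X)$-factor of the Levi of $Q_1$, so $M(w,s)f_s$ restricts along the $G(X)$-directions to vectors in $\sigma$ just as $f_s$ does. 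The step I expect to be the main obstacle is the one in the second paragraph: tracking the genuine character $\chi_\psi$, the normalisation, and the Iwasawa decomposition of $\gamma_{Fv}$ precisely enough to confirm that $g\mapsto f_s(g\gamma_{Fv})$ restricted to $G(Z')(\AA)$ is indeed (a translate of) a $\sigma$-vector and that its product with $\overline{\theta_{X_1,Y}(\cdot\,\gamma_{Fv},1,\Phi)}$ descends to $G(Z')(\AA)$; but this runs parallel to the already-carried-out computation of $I_{\Omega_{0,1}}(\xi)$ in Sec.~\ref{sec:Omega01}, so no genuinely new difficulty should arise.
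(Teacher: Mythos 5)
Your argument is essentially the paper's own proof: the same choice of $\gamma_{Fv}$ and the same conjugation $\gamma_{Fv}G(Z_1)\gamma_{Fv}^{-1}=G(Fv^+\oplus Z\oplus Fv)$ identify each $v$-summand as a period integral of $\sigma\otimes\overline{\Theta_{X,Y}}$ over $[G(Fv^+\oplus Z\oplus Fv)]$, which vanishes by the non-distinction hypothesis since $Fv^+\oplus Z\oplus Fv\supset Z$ has dimension $\dim Z+2$. The additional details you supply (constancy of the truncation factor on $G(Z')(\AA)$, the restriction of the right-translated section being a $\sigma$-vector, and the mixed-model decomposition of the restricted theta series, with the same remark for $M(w,s)f_s$) are exactly what the paper leaves implicit by analogy with its computation of $I_{\Omega_{0,1}}(\xi)$.
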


\subsubsection{$I_{\Omega_{1,1}} (\xi)$}
\label{sec:Omega11}
We collapse the series over $\delta$ and the integral to get
\begin{equation*}
  \int_{J (Z_1,Fe_1^-) (F)\lmod G (Z_1) (\AA)}  \sum_{v\in (V-\{0\})/F^\times} \xi (\gamma_{F (v+e_1^-)} g) dg.
\end{equation*}
Using the Iwasawa decomposition we get that each $v$-term is equal to
\begin{equation*}
 \int_{K_{G (Z_1)}} \int_{\GL_1 (\AA)} \int_{[J (Z_1,Fe_1^-)]} \xi (\gamma_{F (v+e_1^-)} g m_1 (t) k) |t|_\AA^{-2\rho_{Q
     (Z_1,Fe_1^-)}} dg dt dk.
\end{equation*}
We take $\gamma_{F (v+e_1^-)}$ to be the element that is given by $e_1^+\mapsto v^+$, $v^+\mapsto e_1^+ - v^+$,
$v\mapsto e_1^-$ and $e_1^- \mapsto v+e_1^+$ on the span of $e_1^+,e_1^-,v^+, v$ and the identity action on the orthogonal complement.
Since $\gamma_{F (v+e_1^-)} J (Z_1,Fe_1^-) \gamma_{F (v+e_1^-)}^{-1} = J (Fv^+\oplus Z \oplus Fv,Fv)$, we get the inner integral
\begin{equation*}
  \int_{[J (Fv^+\oplus Z \oplus Fv,Fv)]} \xi (gm_1 (t) k)dg
\end{equation*}
which is a period integral on $\sigma\otimes\overline {\Theta_{X,Y}}$ over the subgroup $J (Fv^+\oplus Z \oplus Fv,Fv)$ of $G
(X)$. Thus we get
\begin{prop}\label{prop:I-Omega11-L-0}
  If $\sigma\otimes\overline {\Theta_{X,Y}}$ is not $J (Z',L')$-distinguished for all $Z' \supset Z$ with $\dim Z' = \dim Z +2$
  and $L'$ an isotropic line of $Z'$ in the orthogonal complement of $Z$,
  then $I_{\Omega_{1,1}} (\xi)$ vanishes for $\xi = \xi_{c,s}$ and $\xi_s^c$.
\end{prop}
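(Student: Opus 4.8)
The plan is to read the conclusion directly off the orbit computation just performed; no new ingredient is needed. Granting the absolute convergence and the interchange of summation and integration furnished by Prop.~\ref{prop:abs-conv}, recall that after collapsing the $\delta$-sum into the integral over $[J(Z_1,Fe_1^-)]$, applying the Iwasawa decomposition along $Q(Z_1,Fe_1^-)$, and conjugating by $\gamma_{F(v+e_1^-)}$, the quantity $I_{\Omega_{1,1}}(\xi)$ is a sum over $v$ in a set of representatives of $(V-\{0\})/F^\times$ of
\begin{equation*}
  \int_{K_{G(Z_1)}}\int_{\GL_1(\AA)}\Bigl(\int_{[J(Fv^+\oplus Z\oplus Fv,\,Fv)]}\xi(g\,m_1(t)k)\,dg\Bigr)\,|t|_\AA^{-2\rho_{Q(Z_1,Fe_1^-)}}\,dt\,dk .
\end{equation*}

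First I would record that for each admissible $v$ the space $Z':=Fv^+\oplus Z\oplus Fv$ is a non-degenerate symplectic subspace of $X$: it is the orthogonal direct sum of $Z$ with the hyperbolic plane $Fv^+\oplus Fv\subset V$ (recall $\langle v^+,v\rangle_V=1$), so $Z\subset Z'$ and $\dim Z'=\dim Z+2$; and $L':=Fv$ is an isotropic line of $Z'$ lying in the orthogonal complement $V$ of $Z$. Hence every pair $(Z',L')$ that occurs is precisely of the type excluded in the hypothesis.

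Next, exactly as in the treatment of $I_{\Omega_{1,0}}$ preceding Prop.~\ref{prop:I-Omega10-L-0}, I would observe that the innermost integral is, up to a scalar, the period over $J(Z',L')$ of the automorphic function $\sigma\otimes\overline{\Theta_{X,Y}}$. Two points make this precise. First, the restriction of $f_s$ (resp.\ of $M(w,s)f_s$) along $\til m(I_1,\cdot)$ is a vector of $\sigma$, the restriction of $\theta_{X_1,Y}(\cdot,1,\Phi)$ along $\til m(I_1,\cdot)$ is a theta function in $\Theta_{X,Y}$, and the fixed right translate $m_1(t)k$ only replaces these by other admissible data. Second, the truncation factor causes no trouble: $H=H_1$ is a homomorphism on $M_1(\AA)$, and $G(X)(\AA)$, embedded via $\til m(I_1,\cdot)$, sits inside $M_1(\AA)$ with trivial $\GL_1$-component, so $H$ vanishes on $G(X)(\AA)$ and $H(g\,m_1(t)k)=H(m_1(t)k)=\log|\det t|_\AA$ is independent of $g\in J(Z',L')(\AA)$; thus $\hat\tau_c(H(\cdot))$ is constant along each inner domain. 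Consequently the inner integral is a scalar times a $J(Z',L')$-period of $\sigma\otimes\overline{\Theta_{X,Y}}$, which vanishes for all choices of data by hypothesis. Summing over $v$, $t$, and $k$ then gives $I_{\Omega_{1,1}}(\xi)=0$ for $\xi=\xi_{c,s}$ and $\xi=\xi_s^c$.

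The only point requiring care — and thus the closest thing to an obstacle — is the bookkeeping in the previous paragraph: pinning down the conjugation that identifies the domain with $[J(Z',L')]$ and confirming that neither the auxiliary right translation nor the truncation factor pushes the integrand outside the class of functions to which the notion ``$J(Z',L')$-distinguished'' applies. This is carried out just as for $I_{\Omega_{1,0}}$, and the supporting growth and convergence estimates are those collected in \cite[Sec.~5]{MR3805648}.
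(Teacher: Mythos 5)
Your argument is essentially the paper's own proof: the same collapsing of the $\delta$-sum, Iwasawa decomposition along $Q(Z_1,Fe_1^-)$, and conjugation by $\gamma_{F(v+e_1^-)}$ identifying the inner domain with $[J(Fv^+\oplus Z\oplus Fv,\,Fv)]$, after which the hypothesis of non-distinction by all such $(Z',L')$ kills each $v$-term. The extra remarks you make (that the truncation factor $\hat\tau_c(H(\cdot))$ is constant along the inner domain and that the restricted data stay within $\sigma\otimes\overline{\Theta_{X,Y}}$) are correct refinements of points the paper passes over silently, so the proposal is correct and matches the paper's route.
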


\subsection{Case $\dim L = 1$}
\label{sec:case-dim-l-1}

\subsubsection{$I_{\Omega_{0,1}} (\xi)$}
\label{sec:omega01-L1}

We consider the $J (Z_1,L)$-orbits in $Q (Z_1,Fe_1^-)\lmod G (Z_1)$ or equivalently in the set of isotropic lines in
$Z_1$. For an isotropic line $\ell$ in $Z_1$, let $\delta_\ell$ denote any element in $G (Z_1)$ such that $Fe_1^-
\delta_\ell = \ell$. There are three $J (Z_1,L)$-orbits. Let $f_1^-$ be any non-zero element in $L$ and $f_1^+$ be an element in $Z$ such that $\form {f_1^+} {f_1^-}_Z=1$. Write $Z=Ff_1^+ \oplus W \oplus Ff_1^-$ and form the augmented space $W_1=\ell_1^+\oplus W \oplus \ell^-$. We note that $Z_1=Ff_1^+ \oplus W_1 \oplus Ff_1^-$. An isotropic line $F (af_1^+ + w + bf_1^-)$ in $Z_1$ for $a,b\in F$ and
$w\in W_1$ is in the same $J (Z_1,L)$-orbit as
\begin{align*}
  &Ff_1^+,\quad \text {if $a\neq 0$};\\
  &Fe_1^-,\quad \text {if $a=0$ and $w\neq 0$};\\
  &Ff_1^-, \quad\text {if $a= 0$ and $w=0$}.
\end{align*}
The stabiliser of $Ff_1^+$ in $J (Z_1,L)$ is $G (W_1)$ and we have $\Stab_{J (Z_1,L)} Ff_1^+ \lmod J (Z_1,L) \isom N
(Z_1,L)$. To describe the stabiliser of $Fe_1^-$, let $NJ_1$ be the subgroup of $J (Z_1,L)$ consisting of
elements of the form
\begin{equation*}
  \begin{pmatrix} 1 & * &0 &0&0\\ & 1 &0&0&0\\ &&I &0&0\\ &&&1& *\\
&&&&1
  \end{pmatrix}
\end{equation*} with respect to the `basis' $f_1^+, e_1^+, W, e_1^-,
f_1^-$ and $NJ_2$  the subgroup of $J (Z_1,L)$ consisting of
elements of the form
\begin{equation*}
  \begin{pmatrix} 1 & 0 &* &*&*\\ & 1 &0&0&*\\ &&I &0&*\\ &&&1&0 \\
&&&&1
  \end{pmatrix}.
\end{equation*}
Then the stabiliser of $Fe_1^-$ in $J (Z_1,L)$ is $NJ_2 \rtimes Q (W_1,Fe_1^-)$. For $Ff_1^-$ the stabiliser is the full
$J (Z_1,L)$. Thus $I_{\Omega_{0,1}} (\xi)$ is further split into three parts:
\begin{align}
  \label{eq:J-Omega01-1}
  J_{\Omega_{0,1},1} (\xi) &= \int_{[J (Z_1,L)]} \sum_{\eta\in N (Z_1,L)} \xi (\delta_{Ff_1^+}\eta g ) dg;\\
\label{eq:J-Omega01-2}
  J_{\Omega_{0,1},2} (\xi) &= \int_{[J (Z_1,L)]} \sum_{\eta\in NJ_2 \rtimes Q (W_1,Fe_1^-) \lmod J (Z_1,L)} \xi (\eta g )
  dg;\\
  \label{eq:J-Omega01-3}
  J_{\Omega_{0,1},3} (\xi) &= \int_{[J (Z_1,L)]} \xi (\delta_{Ff_1^-} g ) dg.
\end{align}

Now we evaluate them one by one.

We collapse the series and the integral and find that \eqref{eq:J-Omega01-1} is equal to
\begin{equation*}
  \int_{N (Z_1,L) (\AA)}  \int_{[G (W_1)]}\xi (\delta_{Ff_1^+} gn ) dg dn.
\end{equation*}
Consider the inner integral. We may pick $\delta_{Ff_1^+}$ to be the element that is given by $e_1^+ \leftrightarrow
-f_1^-$ and $e_1^- \leftrightarrow f_1^+$ on the span of $e_1^\pm, f_1^{\pm}$ and identity on the orthogonal
complement. Since $\delta_{Ff_1^+} G (W_1) \delta_{Ff_1^+}^{-1} = G (Ff_1^+ \oplus W \oplus Ff_1^-) = G (Z)$, 
this inner integral is a period integral on $\sigma\otimes\overline {\Theta_{X,Y}}$ over the subgroup $G (Z)$ of $G
(X)$. Thus we get
\begin{prop}\label{prop:J-Omega01-1-L-1}
  If $\sigma\otimes\overline {\Theta_{X,Y}}$ is not $G (Z)$-distinguished, then $J_{\Omega_{0,1},1} (\xi)$ vanishes for
  $\xi=\xi_{c,s}$ and $\xi_s^c$.
\end{prop}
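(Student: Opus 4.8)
The plan is to complete the unfolding begun in the paragraph preceding the statement and thereby exhibit $J_{\Omega_{0,1},1}(\xi)$ as an integral, over a parameter, of period integrals over $G(Z)$. Granting the absolute convergence supplied by Prop.~\ref{prop:abs-conv}, collapsing the sum over $\eta\in N(Z_1,L)$ against $[J(Z_1,L)]$ by means of the identification $\Stab_{J(Z_1,L)}Ff_1^+\lmod J(Z_1,L)\isom N(Z_1,L)$ rewrites $J_{\Omega_{0,1},1}(\xi)$ as $\int_{N(Z_1,L)(\AA)}\int_{[G(W_1)]}\xi(\delta_{Ff_1^+}gn)\,dg\,dn$, so it suffices to show the inner integral over $[G(W_1)]$ vanishes for each fixed $n$. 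With $\delta_{Ff_1^+}$ taken as in the discussion above (acting by $e_1^+\leftrightarrow -f_1^-$, $e_1^-\leftrightarrow f_1^+$ and trivially on the orthogonal complement), conjugation carries $G(W_1)$ onto $G(Ff_1^+\oplus W\oplus Ff_1^-)=G(Z)$; writing $h=\delta_{Ff_1^+}g\delta_{Ff_1^+}^{-1}$ and $g_0=\delta_{Ff_1^+}n$, the inner integral becomes $\int_{[G(Z)]}\xi(hg_0)\,dh$, with $h$ ranging over $[G(Z)]$ since $\delta_{Ff_1^+}$ lies in $G(X_1)(F)$.

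Next I would identify $h\mapsto\xi(hg_0)$ on $[G(Z)]$ with a nonzero constant times the restriction to $G(Z)(\AA)$ of a product $\phi\cdot\overline{\theta}$ with $\phi\in\sigma$ and $\theta\in\Theta_{X,Y}$. The truncation factor is harmless: $G(Z)\subset G(X)$ sits inside the Levi $M_1$ with trivial $\GL_1$-component, so $H_1$ is constant along $G(Z)(\AA)g_0$ and $\hat\tau_c$ (resp.\ $\hat\tau^c$) contributes only a scalar. The equivariance properties defining $\cA_{1,\psi}(s,\chi,\sigma)$ make $h\mapsto f_s(hg_0)$, resp.\ $h\mapsto M(w,s)f_s(hg_0)$, a right translate of a vector of $\sigma$, hence again a vector of $\sigma$. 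Passing to a mixed model in which $G(X)$ fixes the $Y\otimes\ell_1^\pm$-components, $h\mapsto\theta_{\psi,X_1,Y}(hg_0,1,\Phi)$ becomes a finite sum of theta functions $\theta_{\psi,X,Y}(h,1,\Phi')$, i.e.\ an element of $\Theta_{X,Y}$. Thus $\int_{[G(Z)]}\xi(hg_0)\,dh$ is, up to a nonzero constant, a period of $\sigma\otimes\overline{\Theta_{X,Y}}$ over $G(Z)$; since these periods are absolutely convergent (by the bounds behind Prop.~\ref{prop:abs-conv}) and $\sigma\otimes\overline{\Theta_{X,Y}}$ is by hypothesis not $G(Z)$-distinguished, every such period vanishes. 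Hence the inner integral is $0$ for every $n$, and integrating over $n$ gives $J_{\Omega_{0,1},1}(\xi)=0$ for both $\xi=\xi_{c,s}$ and $\xi=\xi_s^c$.

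There is no deep obstacle here, since the computation parallels the symplectic case of \cite{MR3805648}; the one thing requiring care is the genuine/non-genuine bookkeeping. One must check that $\xi$, being a product of two genuine functions on $\til G(X_1)(\AA)$, descends to an honest function on $G(X_1)(\AA)$ that can be restricted to $G(Z)(\AA)$, and that the identification of $h\mapsto f_s(hg_0)$ with a $\sigma$-vector is compatible with the splitting of $\til G(X_1)$ over unipotent radicals and with the canonical lift over $F$-points; the extra twist by $\chi_\psi$ enters exactly as in Prop.~\ref{prop:I-Omega01-L-0}. With these compatibilities noted, the argument runs as in \cite{MR3805648}.
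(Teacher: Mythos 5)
Your argument is exactly the paper's: collapse the series against $[J(Z_1,L)]$ to get $\int_{N(Z_1,L)(\AA)}\int_{[G(W_1)]}\xi(\delta_{Ff_1^+}gn)\,dg\,dn$, conjugate by $\delta_{Ff_1^+}$ (using $\delta_{Ff_1^+}G(W_1)\delta_{Ff_1^+}^{-1}=G(Z)$) to recognise the inner integral as a period of $\sigma\otimes\overline{\Theta_{X,Y}}$ over $G(Z)$, and conclude vanishing from the non-distinction hypothesis, with convergence from Prop.~\ref{prop:abs-conv}. The extra bookkeeping you supply (constancy of the truncation factor along $G(Z)$, the mixed-model restriction of the theta function, the genuine/non-genuine descent) is correct and simply makes explicit what the paper leaves implicit.
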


We collapse the series and the integral and find that \eqref{eq:J-Omega01-2} is equal to
\begin{equation*}
  \int_{NJ_1 (\AA)}\int_{[NJ_2]}\int_{Q (W_1,Fe_1^-) (F)\lmod G (W_1) (\AA)} \xi (n_2 n_1 g) dg dn_2 dn_1.
\end{equation*}
Then using the Iwasawa decomposition of $G (W_1) (\AA)$ we get
\begin{align*}
&\phantom{=} \int_{K_{G (W_1)}}\int_{NJ_1 (\AA)}\int_{[NJ_2]}\int_{[N (W_1,Fe_1^-)]}\int_{[G (W)]}\int_{[GL_1]} \xi (n_2 n_1 n m_1
  (t) g k)\\
 & \qquad\qquad|t|_\AA^{-2\rho_{Q ((W_1,Fe_1^-))}} dt dg dn dn_2 dn_1 dk\\
&  =   \int_{K_{G (W_1)}}\int_{NJ_1 (\AA)}\int_{[NJ_2]}\int_{[N (W_1,Fe_1^-)]}\int_{[G (W)]}\int_{[GL_1]} \xi (n n_2 m_1
                                                                 (t)  g n_1 k)\\
  &\qquad\qquad|t|_\AA^{-2\rho_{Q ((W_1,Fe_1^-))} - 1} dt dg dn dn_2 dn_1 dk.
\end{align*}
Now we collapse part of $NJ_2$ and $G (W)$ to get $J (Z,Ff_1^-)$. We get
\begin{align*}
  \int_{K_{G (W_1)}}\int_{NJ_1 (\AA)}\int_{[N']}\int_{[J (Z,f_1^-)]}\int_{[GL_1]} \xi (n' m_1
  (t)  g n_1 k) |t|_\AA^{-2\rho_{Q ((W_1,Fe_1^-))} - 1} dt dg dn' dk.
\end{align*}
Note that $N'=N (Z_1,Fe_1^-)\cap N (Z_1,Fe_1^-\oplus Ff_1^-)$ is formed out of the leftover part of $NJ_2$ and $N (W_1,Fe_1^-)$.

Suppose that $\xi=\xi_{c,s}$. Then
\begin{align*}
 &\phantom{=}  \xi (n' m_1  (t)  g n_1 k) \\
  &= f_s (n' m_1  (t)  g n_1 k)\overline {\theta_{X_1,Y} (n' m_1  (t)  g n_1 k,1,\Phi)}\hat
  {\tau}_c (H (n' m_1  (t)  g n_1 k)) \\
&  =\chi_Y\chi_\psi (m_1 (t)) |t|_\AA^{s+\rho_{Q_1}} f_s (  g n_1 k)\overline {\theta_{X_1,Y} (n' m_1  (t)  g n_1 k,1,\Phi)}\hat
  {\tau}_c (H ( m_1  (t)   n_1 )).
\end{align*}
The only term that involves $n'$ is $\overline {\theta_{X_1,Y} (n' m_1  (t)  g n_1 k,1,\Phi)}$. By the explicit formulae of the
Weil representation, we get
\begin{equation*}
  \int_{[N']}  \theta_{X_1,Y} (n' m_1  (t)  g n_1 k,1,\Phi) dn' = \chi_Y\chi_\psi (t)|t|_\AA^{\dim Y /2} \theta_{X,Y}
  (g,1,\Phi_{n_1 k}),
\end{equation*}
where $\Phi_{n_1 k} (\cdot) = \omega_{X_1,Y} (n_1k,1)\Phi (0,\cdot)$. In total, the exponent of $|t|_\AA$ is
\begin{equation*}
  s+\rho_{Q_1} +\dim Y/2 - 2\rho_{Q ((W_1,Fe_1^-))} - 1 =s-s_0.
\end{equation*}
Thus $J_{\Omega_{0,1},1} (\xi_{c,s})$ is equal to
\begin{align*}
  &\phantom{=} \int_{K_{G (W_1)}}\int_{NJ_1 (\AA)} \int_{[J (Z,L)]} f_s (g\overline {n} k) \overline {\theta_{X,Y} (g,1,\Phi_{\overline {n}k})} \int_{[GL_1]} \hat
  {\tau}_c (H ( m_1  (t)   n_1 )) |t|_\AA^{s-s_0} dt dg d\overline {n} dk\\
&  = \vol (F^\times \lmod \AA^\times)  \int_{K_{G (W_1)}}\int_{NJ_1 (\AA)} \int_{[J (Z,L)]} f_s (g\overline {n} k) \overline {\theta_{X,Y} (g,1,\Phi_{\overline {n}k})} \frac{(c/c (\overline {n}))^{s-s_0}}{s-s_0} dg d\overline {n} dk,
\end{align*}
where $c (\overline {n}) = \exp (H (\overline {n}))$ where we note that $H (\overline {n})\in \fa_{M_1} \isom \RR $.

Thus it can possibly have a pole only at $s=s_0$ with residue
\begin{equation*}
  \vol (F^\times \lmod \AA^\times)  \int_{K_{G (W_1)}}\int_{NJ_1 (\AA)} \int_{[J (Z,L)]} f_{s_0} (g\overline {n} k) \overline {\theta_{X,Y} (g,1,\Phi_{\overline {n}k})}  dg d\overline {n} dk.
\end{equation*}
Note that the innermost integral is a period integral on $\sigma\otimes\overline {\Theta_{X,Y}}$ over the subgroup $J
(Z,L)$ of $G(X)$.
Similarly we evaluate $J_{\Omega_{0,1},1} (\xi_s^c)$ to get
\begin{align}\label{eq:J-Omega01-2-L1-xi-upper-c}
&  \vol (F^\times \lmod \AA^\times)  \int_{K_{G (W_1)}}\int_{NJ_1 (\AA)} \int_{[J (Z,L)]} M (w,s) f_s (g\overline {n} k)
  \overline {\theta_{X,Y} (g,1,\Phi_{\overline {n}k})}\\
 \nonumber &\qquad\qquad\frac{(c/c (\overline {n}))^{-s-s_0}}{s+s_0} dg d\overline {n} dk.
\end{align}
Thus we get parts (1) and (2) of the following
\begin{prop}\label{prop:J-Omega01-2-L-1}
  \begin{enumerate}
  \item If $\sigma\otimes\overline {\Theta_{X,Y}}$ is not $J (Z,L)$-distinguished, then $J_{\Omega_{0,1},2} (\xi)$
    vanishes identically for $\xi=\xi_{c,s}$ and $\xi_s^c$;
  \item $J_{\Omega_{0,1},2} (\xi)$ does not have a pole at $s\neq s_0$;
  \item If $\sigma\otimes\overline {\Theta_{X,Y}}$ is $J (Z,L)$-distinguished, then the residue
    \begin{equation}\label{eq:residue-of-J-Omega01-2}
      \vol (F^\times \lmod \AA^\times)  \int_{K_{G (W_1)}}\int_{NJ_1 (\AA)} \int_{[J (Z,L)]} f_{s_0} (g\overline {n} k) \overline {\theta_{X,Y} (g,1,\Phi_{\overline {n}k})}  dg d\overline {n} dk
    \end{equation}
    of $J_{\Omega_{0,1},2} (\xi_{c,s})$ does not vanish for some choice of data. 
  \end{enumerate}
\end{prop}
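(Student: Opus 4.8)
The plan is to read off parts (1) and (2) directly from the explicit evaluation just carried out, and to prove (3) by the approximate-identity construction used for part (3) of Prop.~\ref{prop:I-Omega01-L-0} (that is, \cite[Prop.~4.1]{MR3805648}), now adapted to accommodate the two extra integrations over $NJ_1 (\AA)$ and $K_{G (W_1)}$.

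For (1) and (2): the computation above exhibits $J_{\Omega_{0,1},2} (\xi_{c,s})$ as $\vol (F^\times\lmod\AA^\times)$ times
\begin{equation*}
  \int_{K_{G (W_1)}}\int_{NJ_1 (\AA)}\int_{[J (Z,L)]} f_s (g\overline {n}k)\,\overline {\theta_{X,Y} (g,1,\Phi_{\overline {n}k})}\,\frac{(c/c (\overline {n}))^{s-s_0}}{s-s_0}\,dg\,d\overline {n}\,dk,
\end{equation*}
and likewise $J_{\Omega_{0,1},2} (\xi_s^c)$ via \eqref{eq:J-Omega01-2-L1-xi-upper-c}. The factor $1/(s-s_0)$ (resp.\ $1/(s+s_0)$) is the only source of a finite pole in $\Re s>0$, which gives (2); and for each fixed $\overline {n}$ and $k$ the innermost integral is a period of $\sigma\otimes\overline {\Theta_{X,Y}}$ over $J (Z,L)$ — after the innocuous right translation $g\mapsto g\overline {n}k$ on $f_s$ together with the replacement $\Psi\mapsto\Phi_{\overline {n}k}$ — so it vanishes identically once $\sigma\otimes\overline {\Theta_{X,Y}}$ fails to be $J (Z,L)$-distinguished, giving (1).

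For (3), suppose $\sigma\otimes\overline {\Theta_{X,Y}}$ is $J (Z,L)$-distinguished and fix $\phi_0\in\sigma$, $\Psi_0\in\cS_{X,Y} (\AA)$ with $\int_{[J (Z,L)]}\phi_0 (g)\,\overline {\theta_{X,Y} (g,1,\Psi_0)}\,dg\neq 0$. I would then choose a section $f_s\in\cA_{1,\psi} (s,\chi_Y,\sigma)$ whose $\sigma$-datum (property (3) of the definition of $\cA_{1,\psi}$) is $\phi_0$ and which, as a function of its remaining arguments, is concentrated near the identity coset; and a Schwartz function $\Phi=\Phi^{(1)}\otimes\Psi_0$ in the mixed model $\cS ((Y\otimes\ell_1^+\oplus (Y\otimes X)^+) (\AA))$ with $\Phi^{(1)}$ a bump function at $0$, so that $\Phi (0,\cdot)=\Psi_0$ up to a positive scalar. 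On the effective support of the integrand — $(\overline {n},k)$ close to $(1,1)$ — one then has $\Phi_{\overline {n}k}=\omega_{X_1,Y} (\overline {n}k,1)\Phi (0,\cdot)$ close to $\Psi_0$ by continuity of the Weil representation, and $g\mapsto f_{s_0} (g\overline {n}k)$ close to $\phi_0 (g)$; substituting into \eqref{eq:residue-of-J-Omega01-2} collapses the residue to a strictly positive constant times $\int_{[J (Z,L)]}\phi_0\,\overline {\theta_{X,Y} (\cdot,1,\Psi_0)}$, which is non-zero. Absolute convergence of all the pieces for $\Re s$ and $c$ large, which legitimises these manipulations, is Prop.~\ref{prop:abs-conv}.

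The main obstacle will be exactly this last bookkeeping: unlike the $G (Z)$-case of Prop.~\ref{prop:I-Omega01-L-0}, here one integrates not only over the compact $K_{G (W_1)}$ but also over the non-compact adelic unipotent group $NJ_1 (\AA)$, so one must verify that the bump supports defining $f_s$ and $\Phi^{(1)}$ can be taken small enough — uniformly over the range of $\overline {n}$ that actually contributes — that $\Phi_{\overline {n}k}$ stays genuinely close to $\Psi_0$ on the relevant set and the resulting weight in $(\overline {n},k)$ is positive on a set of positive measure rather than oscillating to zero. This is the partition-of-unity argument of \cite[Prop.~4.1]{MR3805648}; the metaplectic features (the genuine character $\chi_Y\chi_\psi$ attached to $f_s$ and the genuine Weil model for $\Phi$) contribute nothing new here, since the product $f_s\cdot\overline {\theta_{X_1,Y}}$ is non-genuine and descends to $G (X_1) (\AA)$, as was already used in forming $\xi$.
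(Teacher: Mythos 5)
Your proposal is correct and follows essentially the same route as the paper: parts (1) and (2) are read off from the explicit evaluation preceding the proposition, and part (3) is proved by starting from data with a non-vanishing $J(Z,L)$-period and constructing a section $f_s$ `extending' $\phi_0$ together with a Schwartz function $\Phi$ in the mixed model $\cS (Y\otimes Fe_1^+ (\AA))\hat{\otimes}\cS_{X,Y}$ `extending' $\Psi_0$ so that \eqref{eq:residue-of-J-Omega01-2} is non-zero. The only difference is one of citation: the paper invokes \cite[Prop.~4.5]{MR3805648}, where the technical bookkeeping over $NJ_1(\AA)$ is already carried out in the symplectic case and only needs transporting to the metaplectic setting, whereas you propose to redo that adaptation starting from the argument of \cite[Prop.~4.1]{MR3805648}; the substance of the construction is the same.
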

\begin{proof}
    For the proof of (3), it can be checked that we can extend the proof in \cite [Prop.~4.5] {MR3805648} for the symplectic case to the
    metaplectic case. The proof there is quite technical due to the presence of the integration over $[NJ_1]$ which is a
    unipotent subgroup.  We start with some choice of data such that
  \begin{equation*}
    \int_{[J (Z,L)]} \phi (h)\overline {\theta_{X,Y} (h,1,\Psi)} dh \neq 0.
  \end{equation*}
Then we can construct a section $f_s\in \cA_1 (s,\chi,\sigma)$ that `extends' $\phi$ and a Schwartz function $\Phi\in\cS
(Y\otimes Fe_1^+ (\AA)) \hat {\otimes} \cS_{X,Y} $ that `extends' $\Psi$ such that \eqref{eq:residue-of-J-Omega01-2} is non-vanishing.
\end{proof}

Finally we evaluate \eqref{eq:J-Omega01-3}. We take $\delta_{Ff_1^-}$ to be the element
that is given by $e_1^+ \leftrightarrow f_1^+$ and $e_1^- \leftrightarrow f_1^-$  on the span of
$e_1^\pm, f_1^\pm$ and identity on the orthogonal complement. Then $\delta_{Ff_1^-}J
(Z_1,L)\delta_{Ff_1^-}^{-1} = J (Z_1,Fe_1^-)$. Thus \eqref{eq:J-Omega01-3} is equal to
\begin{equation*}
  \int_{[J (Z_1,Fe_1^-)]} \xi (g\delta_{Ff_1^-}) dg = \int_{[N (Z_1,Fe_1^-)]}\int_{[G (Z)]} \xi (gn\delta_{Ff_1^-}) dgdn
\end{equation*}
whose inner integral  is a period integral on $\sigma\otimes\overline {\Theta_{X,Y}}$ over $[G
(Z)]$. Thus we get:
\begin{prop}\label{prop:J-Omega01-3-L-1}
  If $\sigma\otimes\overline {\Theta_{X,Y}}$ is not $G (Z)$-distinguished, then $J_{\Omega_{0,1},3}
  (\xi)$ vanishes for $\xi_{c,s}$ and $\xi_s^c$.
\end{prop}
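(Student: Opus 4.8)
The plan is to read the conclusion off the reduction already displayed just above the statement. That display, obtained by conjugating by $\delta_{Ff_1^-}$, using $\delta_{Ff_1^-}J(Z_1,L)\delta_{Ff_1^-}^{-1}=J(Z_1,Fe_1^-)$, and factoring $J(Z_1,Fe_1^-)=N(Z_1,Fe_1^-)\rtimes G(Z)$ (the $\GL_1$-part of the Levi of $Q(Z_1,Fe_1^-)$ being killed by the requirement of fixing $e_1^-$, and $G(Z)$ being unimodular so the product measure is correct), gives
\[
  J_{\Omega_{0,1},3}(\xi)=\int_{[N(Z_1,Fe_1^-)]}\Big(\int_{[G(Z)]}\xi(gn\delta_{Ff_1^-})\,dg\Big)\,dn .
\]
So it suffices to prove that, for each fixed $n$, the inner integral is a $G(Z)$-period of an element of $\sigma\otimes\overline{\Theta_{X,Y}}$; the hypothesis that $\sigma\otimes\overline{\Theta_{X,Y}}$ is not $G(Z)$-distinguished then makes it vanish for all choices of $f$ and $\Phi$, whence $J_{\Omega_{0,1},3}(\xi)\equiv 0$.

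To identify the inner integral I would unwind $\xi_{c,s}(gn\delta_{Ff_1^-})$ along $g\in G(Z)(\AA)$, which embeds into the $G(X)$-block of $M_1(\AA)$. First, since $H$ is trivial on that block and is left $M_1(\AA)$-equivariant, $H(gn\delta_{Ff_1^-})=H(n\delta_{Ff_1^-})$, so the truncation factor $\hat{\tau}_c(H(\cdot))$ is constant over the $g$-integral and pulls out. Second, by an Iwasawa decomposition of $n\delta_{Ff_1^-}$ and property (3) in the definition of $\cA_{1,\psi}(s,\chi_Y,\sigma)$, the function $g\mapsto f_s(gn\delta_{Ff_1^-})$ on $G(Z)(\AA)$ is, up to a scalar, the restriction to $G(Z)$ of a right translate of a vector $\phi\in\sigma$. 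Third, realising $\omega_{X_1,Y}$ on the mixed model $\cS((Y\otimes\ell_1^+\oplus(Y\otimes X)^+)(\AA))$, on which $G(X)$ acts only in the $(Y\otimes X)^+$ variable, $g\mapsto\theta_{X_1,Y}(gn\delta_{Ff_1^-},1,\Phi)$ on $G(Z)(\AA)$ is an absolutely convergent sum of restrictions of theta functions $\theta_{X,Y}(\cdot,1,\Psi)$ and so lies in $\Theta_{X,Y}$. Putting these together, the inner integral equals $\hat{\tau}_c(H(n\delta_{Ff_1^-}))$ times $\int_{[G(Z)]}\phi(g)\,\overline{\theta_{X,Y}(g,1,\Psi)}\,dg$ for suitable $\phi\in\sigma$ and $\Psi$, which is precisely a $G(Z)$-period of $\sigma\otimes\overline{\Theta_{X,Y}}$. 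The case $\xi=\xi_s^c$ is identical with $\hat{\tau}^c$ for $\hat{\tau}_c$ and $M(w,s)f_s$ for $f_s$; for the Siegel-type maximal parabolic $Q_1$ the Weyl element fixes the cuspidal datum on the $G(X)$-block (using that $\chi_Y$ is quadratic), so $M(w,s)f_s$ again lies in a space $\cA_{1,\psi}(\,\cdot\,,\chi_Y,\sigma)$ and the same unwinding applies.

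I do not expect a real obstacle here: unlike the $J_{\Omega_{0,1},2}$ piece, no auxiliary unipotent integration intervenes and no pole is generated, so the only point requiring care is making the identification of the inner integral with a genuine period of $\sigma\otimes\overline{\Theta_{X,Y}}$ precise — tracking the Iwasawa decomposition and, above all, the compatibility of the metaplectic cocycles over $G(Z)$, over $N(Z_1,Fe_1^-)$, and over the rational element $\delta_{Ff_1^-}$, which is what lets the two genuine factors of $\xi$ combine into a function on $G(X)(\AA)$. These ingredients, together with the absolute convergence and meromorphic continuation asserted in Prop.~\ref{prop:abs-conv}, are exactly parallel to the symplectic computation in \cite{MR3805648} and I would import them from there.
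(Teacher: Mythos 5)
Your proposal is correct and follows essentially the same route as the paper: conjugate by $\delta_{Ff_1^-}$ using $\delta_{Ff_1^-}J(Z_1,L)\delta_{Ff_1^-}^{-1}=J(Z_1,Fe_1^-)$, factor the integral as $\int_{[N(Z_1,Fe_1^-)]}\int_{[G(Z)]}$, and recognise the inner integral as a $G(Z)$-period of $\sigma\otimes\overline{\Theta_{X,Y}}$, which vanishes by hypothesis. You merely spell out details the paper leaves implicit (constancy of the truncation factor in $g$, restriction of the section to the $G(X)$-block, the mixed-model expansion of the theta series), the only cosmetic caveat being that the restricted theta function is a convergent sum of elements of $\Theta_{X,Y}$ rather than a single one, which does not affect the termwise vanishing.
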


\subsubsection{$I_{\Omega_{1,0}} (\xi)$}
\label{sec:Omega10-L1}

For each $v\in V-\{0\}$, fix $v^+\in V$ that is dual to $v$, i.e., $\form{v^+}{v}=1$. Then we take $\gamma_{Fv}$ that is
determined by $v^+\leftrightarrow e_1^+$ and $v \leftrightarrow e_1^-$ on the span of $e_1^+,e_1^-, v^+,v$ and identity
on the orthogonal complement. Since $\gamma_{Fv} J (Z_1,L) \gamma_{Fv}^{-1} \isom J (Fv^+\oplus Z \oplus Fv, L)$,  $I_{\Omega_{1,0}} (\xi)$ is equal to
\begin{equation*}
  \sum_{v\in (V-\{0\} / F^\times)} \int_{[J (Fv^+\oplus Z \oplus Fv, L)]} \xi (g\gamma_{Fv}) dg
\end{equation*}
which is a sum of period integrals on $\sigma\otimes\overline {\Theta_{X,Y}}$ over $[J (Fv^+\oplus Z \oplus Fv,
L)]$. Thus we get
\begin{prop}\label{prop:I-Omega10-L-1}
  If $\sigma\otimes\overline {\Theta_{X,Y}}$ is not $J (Z',L)$-distinguished for all $Z'\supset Z$ such that $\dim Z'
  =\dim Z +2$, then $I_{\Omega_{1,0}} (\xi)$ vanishes for $\xi=\xi_{c,s}$ and $\xi_s^c$.
\end{prop}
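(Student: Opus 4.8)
The plan is to compute $I_{\Omega_{1,0}}(\xi)$ directly from the orbit decomposition recorded just above the statement and read off the vanishing. Recall that on the set $\Omega_{1,0}$ of isotropic lines of $X_1$ with trivial projection to $Z_1$ the group $G(Z_1)$, and hence its subgroup $J(Z_1,L)$, acts trivially, so the $J(Z_1,L)$-orbits are the individual lines $Fv$ for $v$ running over $(V-\{0\})/F^\times$, each with full stabiliser $J(Z_1,L)$. Collapsing the sum over orbit representatives against the integral over $[J(Z_1,L)]$ — legitimate by the absolute convergence of Prop.~\ref{prop:abs-conv} — presents $I_{\Omega_{1,0}}(\xi)$ as $\sum_v\int_{[J(Z_1,L)]}\xi(\gamma_{Fv}g)\,dg$, so it suffices to show that each $v$-summand vanishes identically in $s$.

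Fix $v$ and choose $v^+\in V$ with $\form{v^+}{v}_V=1$; take $\gamma_{Fv}\in G(X_1)(F)$ to interchange $e_1^+\leftrightarrow v^+$ and $e_1^-\leftrightarrow v$ and act trivially on the orthogonal complement of $\Span(e_1^\pm,v^\pm)$. Since $v,v^+$ span a hyperbolic plane of $V$ orthogonal to $Z$, the space $Z'=Fv^+\oplus Z\oplus Fv$ is a non-degenerate symplectic subspace of $X$ with $Z\subset Z'$, $\dim Z'=\dim Z+2$, and $L$ still an isotropic line of $Z'$; moreover $\gamma_{Fv}$ fixes $Z$, hence $L$, pointwise, so $\gamma_{Fv}J(Z_1,L)\gamma_{Fv}^{-1}=J(Z',L)$. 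Substituting $g=\gamma_{Fv}^{-1}g'\gamma_{Fv}$ with $g'$ ranging over $[J(Z',L)]$ (a measure-preserving reparametrisation because $\gamma_{Fv}$ is $F$-rational), the $v$-summand becomes $\int_{[J(Z',L)]}\xi(g'\gamma_{Fv})\,dg'$.

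Finally I would identify this with a period of $\sigma\otimes\overline{\Theta_{X,Y}}$ over $J(Z',L)$. For $g'\in J(Z',L)(\AA)\subset G(X)(\AA)\subset M_1(\AA)$ one fixes an Iwasawa decomposition $\gamma_{Fv}=n_0 m_0 k_0$: the truncation weight $H(g'\gamma_{Fv})$ is then constant in $g'$; by properties (2) and (3) defining $\cA_{1,\psi}(s,\chi_Y,\sigma)$ the factor $f_s(g'\gamma_{Fv})$, resp.\ $M(w,s)f_s(g'\gamma_{Fv})$, is a constant multiple of a vector of $\sigma$ evaluated at $g'$; and, just as in the $I_{\Omega_{0,1}}$ computations above, the factor $\theta_{X_1,Y}(g'\gamma_{Fv},1,\Phi)$ becomes $\theta_{X,Y}(g',1,\Psi)$ for a Schwartz datum $\Psi$ obtained by evaluating the Weil representation along $Y\otimes\ell_1^+$ and $Y\otimes\ell_1^-$; the product is non-genuine and descends to $G(X)(\AA)$ throughout. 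So each $v$-summand is a period $\int_{[J(Z',L)]}\phi(g')\overline{\theta_{X,Y}(g',1,\Psi)}\,dg'$ with $\phi\in\sigma$, and the hypothesis that $\sigma\otimes\overline{\Theta_{X,Y}}$ is not $J(Z',L)$-distinguished for every such $Z'\supset Z$ with $\dim Z'=\dim Z+2$ — together with the absolute convergence that is part of the definition of distinction — forces each of these periods, and hence $I_{\Omega_{1,0}}(\xi)$ itself, to vanish for $\xi=\xi_{c,s}$ and for $\xi=\xi_s^c$. The one point that needs genuine care is this last identification: rewriting $g'\mapsto\xi(g'\gamma_{Fv})$ as a product of a $\sigma$-vector and a conjugate theta function on $J(Z',L)(\AA)$ while tracking the metaplectic cocycle, which is the metaplectic analogue of the corresponding manipulation for symplectic groups in Sec.~5 of \cite{MR3805648} and of the treatment of the other orbits above.
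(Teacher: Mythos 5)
Your argument is correct and is essentially the paper's own proof: conjugating by $\gamma_{Fv}$ turns each $v$-summand into $\int_{[J (Fv^+\oplus Z\oplus Fv,L)]}\xi (g'\gamma_{Fv})\,dg'$, which is recognised as a period of $\sigma\otimes\overline{\Theta_{X,Y}}$ over $J (Z',L)$ with $Z'=Fv^+\oplus Z\oplus Fv$ and hence vanishes under the hypothesis. One small imprecision: since there is no unipotent integration here (unlike the $\Omega_{0,1}$ case, where integrating over $[N (Z_1,Fe_1^-)]$ forces the $Y\otimes\ell_1^+$-variable to $0$), the restriction of $\theta_{X_1,Y} (g'\gamma_{Fv},1,\Phi)$ to $g'\in G (X) (\AA)$ is not a single $\theta_{X,Y} (g',1,\Psi)$ but a sum over $u\in (Y\otimes\ell_1^+) (F)$ of $\theta_{X,Y} (g',1,\Phi' (u,\cdot))$ with $\Phi'=\omega_{X_1,Y} (\gamma_{Fv},1)\Phi$; this does not affect the conclusion, as each term is again a period of the required type and the interchange of sum and integral is justified by the convergence estimates.
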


\subsubsection{$I_{\Omega_{1,1}} (\xi)$}
\label{sec:Omega11-L1}

We fix $v\in V-\{0\}$ and consider each $v$-term
\begin{align*}
  \int_{[J (Z_1,L)]} \sum_{\delta \in J(Z_1,Fe_1^-) \lmod G (Z_1)}  \xi (\gamma_{F (v+e_1^-)}\delta g) dg.
\end{align*}
We note that $J(Z_1,Fe_1^-) \lmod G (Z_1)$ parametrises the set of non-zero isotropic vectors in $Z_1$. We consider the
$J (Z_1,L)$-orbits on this set. The analysis is similar to that in Sec.~\ref{sec:omega01-L1} where we considered isotropic
lines in $Z_1$. Again we have $L=Ff_1^-$ and   an element $f_1^+$ in $Z$ such that $\form {f_1^+} {f_1^-}_Z=1$. We write $Z=Ff_1^+ \oplus W \oplus Ff_1^-$ and form the augmented space $W_1$. We note that $Z_1=Ff_1^+ \oplus W_1 \oplus Ff_1^-$. A non-zero isotropic vector $ af_1^+ + w + bf_1^-$ in $Z_1$ for $a,b\in F$ and
$w\in W_1$ is in the same $J (Z_1,L)$-orbit as
\begin{align*}
&  af_1^+,\quad \text {if $a\neq 0$};\\
 & e_1^-, \quad\text {if $a=0$ and $w\neq 0$};\\
&  bf_1^-, \quad\text {if $a= 0$ and $w=0$}.
\end{align*}
For $a\in F^\times$, the stabiliser of $af_1^+$ in $J (Z_1,L)$ is $G (W_1)$ and we have 
\begin{equation*}
  \Stab_{J (Z_1,L)} af_1^+ \lmod  J (Z_1,L) \isom N(Z_1,L).
\end{equation*}
 The stabiliser of $e_1^-$ in $J (Z_1,L)$ is $NJ_2 \rtimes J (W_1,Fe_1^-)$, where we
adopt the same notation $NJ_1$ and $NJ_2$ as in Sec.~\ref{sec:omega01-L1}. For $b\in F^\times$, the stabiliser of $bf_1^-$ in
$J (Z_1,L)$ is $J (Z_1,L)$. Thus $I_{\Omega_{1,1}} (\xi)$ is further split into three parts:
\begin{align}
  \label{eq:J-Omega11-1}
  J_{\Omega_{1,1},1} (\xi)& = \int_{[J (Z_1,L)]} \sum_{a\in F^\times} \sum_{\eta\in N (Z_1,L)} \xi (\gamma_{F (v+e_1^-)}\delta_{af_1^+}\eta g ) dg;\\
\label{eq:J-Omega11-2}
  J_{\Omega_{1,1},2} (\xi) &= \int_{[J (Z_1,L)]} \sum_{\eta\in NJ_2 \rtimes J (W_1,Fe_1^-) \lmod J (Z_1,L)} \xi (\gamma_{F (v+e_1^-)}\eta g )
  dg;\\
  \label{eq:J-Omega11-3}
  J_{\Omega_{1,1},3} (\xi) &= \int_{[J (Z_1,L)]} \sum_{b\in F^\times} \xi (\gamma_{F (v+e_1^-)}\delta_{bf_1^-} g ) dg.
\end{align}

We take the same $\gamma_{F (v+e_1^-)}$ as in Sec.~\ref{sec:Omega11}. We proceed to evaluate each part.

For each $a$-term in $J_{\Omega_{1,1},1} (\xi)$, we get
\begin{equation*}
  \int_{N (Z_1,L) (\AA)}  \int_{[G (W_1)]}\xi (\gamma_{F (v+e_1^-)}\delta_{af_1^+} g n) dg dn.
\end{equation*}
Since $\delta_{af_1^+} G (W_1) \delta_{af_1^+}^{-1} =  G (Z)$ and $\gamma_{F (v+e_1^-)}$ commutes with $G (Z)$, the
inner integral becomes
\begin{equation*}
  \int_{[G (Z)]}\xi (g \gamma_{F (v+e_1^-)}\delta_{af_1^+}  n) dg
\end{equation*}
which is a period integral on $\sigma\otimes\overline {\Theta_{X,Y}}$ over the subgroup $G (Z)$ of $G
(X)$. We get
\begin{prop}\label{prop:J-Omega11-1-L-1}
  If $\sigma\otimes\overline {\Theta_{X,Y}}$ is not $G (Z)$-distinguished, then $J_{\Omega_{1,1},1} (\xi)$ vanishes for
  $\xi=\xi_{c,s}$ and $\xi_s^c$.
\end{prop}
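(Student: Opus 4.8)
The plan is to reduce $J_{\Omega_{1,1},1}(\xi)$ to a (convergent superposition of) period integrals of $\sigma\otimes\overline{\Theta_{X,Y}}$ over $G(Z)$ and then to invoke the hypothesis that $\sigma\otimes\overline{\Theta_{X,Y}}$ is not $G(Z)$-distinguished; this runs exactly parallel to the computation already carried out in the display preceding the proposition, and to the corresponding step of the symplectic treatment in \cite{MR3805648}. First I would collapse the double sum in \eqref{eq:J-Omega11-1}: since the stabiliser in $J(Z_1,L)$ of the isotropic vector $af_1^+$ (for $a\in F^\times$) is $G(W_1)$, with $G(W_1)(\AA)\lmod J(Z_1,L)(\AA)\cong N(Z_1,L)(\AA)$, unfolding gives
\begin{equation*}
  J_{\Omega_{1,1},1}(\xi)=\sum_{a\in F^\times}\int_{N(Z_1,L)(\AA)}\int_{[G(W_1)]}\xi\bigl(\gamma_{F(v+e_1^-)}\delta_{af_1^+}gn\bigr)\,dg\,dn,
\end{equation*}
the interchange of sum and integral being licensed by Prop.~\ref{prop:abs-conv}. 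Substituting $g\mapsto\delta_{af_1^+}^{-1}g\delta_{af_1^+}$ and using $\delta_{af_1^+}G(W_1)\delta_{af_1^+}^{-1}=G(Z)$ (over both $F$ and $\AA$, as $a\in F^\times$), together with the fact that $\gamma_{F(v+e_1^-)}$ acts as the identity on $Z$ and hence centralises $G(Z)$, turns each inner integral into $\int_{[G(Z)]}\xi(gx)\,dg$ with $x=\gamma_{F(v+e_1^-)}\delta_{af_1^+}n$.

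It then remains to recognise $\int_{[G(Z)]}\xi(gx)\,dg$ as a period of $\sigma\otimes\overline{\Theta_{X,Y}}$. The truncation factor $\hat{\tau}_c(H(gx))$ (resp. $\hat{\tau}^c(H(gx))$) is constant in $g\in G(Z)(\AA)$, because $H$ vanishes on $G(X)(\AA)\supset G(Z)(\AA)$ (which sits inside the Levi $M_1$ with trivial $\GL_1$-component), and thus factors out as a scalar. For the remaining factor, the defining properties of $\cA_{1,\psi}(s,\chi_Y,\sigma)$ together with the Iwasawa decomposition show that $g\mapsto f_s(gx)$, restricted to $G(X)(\AA)$, is --- up to a scalar depending on $x$ alone --- a vector $\phi_x\in\sigma$, and the same applies to $M(w,s)f_s$. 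Writing $\theta_{X_1,Y}(gx,1,\Phi)=\theta_{X_1,Y}(g,1,\omega_{X_1,Y}(x,1)\Phi)$ and realising $\omega_{X_1,Y}$ on a mixed model in which $\til G(X)$ acts only on the $\cS_{X,Y}$-factor, the restriction of $\theta_{X_1,Y}(g,1,\cdot)$ to $\til G(X)(\AA)$ becomes a convergent sum of theta functions $\theta_{X,Y}(g,1,\cdot)$ lying in $\Theta_{X,Y}$. Hence each inner integral is a scalar multiple of a convergent sum of periods $\int_{[G(Z)]}\phi_x(g)\,\overline{\theta_{X,Y}(g,1,\Psi)}\,dg$ of $\sigma\otimes\overline{\Theta_{X,Y}}$ over $G(Z)$.

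Under the non-distinction hypothesis every such period vanishes, so every term in the unfolded expression vanishes and $J_{\Omega_{1,1},1}(\xi)\equiv0$ for $\xi=\xi_{c,s}$; the identical computation with $M(w,s)f_s$ in place of $f_s$ handles $\xi=\xi_s^c$. I expect the main obstacle to be the bookkeeping in the previous paragraph --- choosing the polarisation of $Y\otimes X_1$ compatibly and tracking the metaplectic cocycle and genuineness twists so that the product $f_s\cdot\overline{\theta_{X_1,Y}}$ genuinely descends to $G(X)(\AA)$ and so that, after restriction to $\til G(X)$, one lands precisely in $\Theta_{X,Y}$ rather than in some twisted variant. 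This is the metaplectic analogue of the corresponding verification in \cite{MR3805648}, and I expect it to go through with only notational changes, as in Propositions~\ref{prop:J-Omega01-1-L-1} and \ref{prop:I-Omega10-L-1} above.
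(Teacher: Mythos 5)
Your proposal is correct and follows essentially the same route as the paper: unfold over the stabiliser $G(W_1)$ of $af_1^+$, conjugate by $\delta_{af_1^+}$ and use that $\gamma_{F(v+e_1^-)}$ commutes with $G(Z)$ to rewrite each inner integral as $\int_{[G(Z)]}\xi(gx)\,dg$, and conclude by non-distinction. The additional bookkeeping you supply (constancy of the truncation factor on $G(Z)(\AA)$, restriction of the right-translated section to a vector of $\sigma$, and the mixed-model decomposition of $\theta_{X_1,Y}$ into elements of $\Theta_{X,Y}$) is exactly what the paper leaves implicit when it calls the inner integral a period of $\sigma\otimes\overline{\Theta_{X,Y}}$ over $G(Z)$.
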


For $J_{\Omega_{1,1},2} (\xi)$ we collapse the integral and the series to get
\begin{align*}
  \int_{NJ_1 (\AA)}\int_{[NJ_2]}\int_{J (W_1,Fe_1^-) (F)\lmod G (W_1) (\AA)}   \xi (\gamma_{F (v+e_1^-)} n_1 n_2 g )  dg
  dn_2 dn_1.
\end{align*}
Then using the Iwasawa decomposition for $G (W_1) (\AA)$ we get
\begin{align*}
&\phantom{=}   \int_{K_{G (W_1)}}\int_{NJ_1 (\AA)}\int_{[NJ_2]}\int_{[J (W_1,Fe_1^-)]} \int_{GL_1 (\AA)}
  \xi (\gamma_{F (v+e_1^-)} n_1 n_2   g m_1 (t) k)  dt dg dn_2 dn_1 dk\\
&  =   \int_{K_{G (W_1)}}\int_{NJ_1 (\AA)}  \int_{GL_1 (\AA)} \int_{[J (Z_1,Fe_1^-\oplus Ff_1^-)]}
  \xi (\gamma_{F (v+e_1^-)} n_1   g m_1 (t) k)   dg dt dn_1 dk \\
 &  =   \int_{K_{G (W_1)}}\int_{NJ_1 (\AA)}  \int_{GL_1 (\AA)} \int_{[J (Z_1,Fe_1^-\oplus Ff_1^-)]}
  \xi (\gamma_{F (v+e_1^-)}   g n_1  m_1 (t) k)   dg dt dn_1 dk .
\end{align*}
Since $\gamma_{F (v+e_1^-)} J (Z_1,Fe_1^-\oplus Ff_1^-)\gamma_{F (v+e_1^-)}^{-1} = J (Fv^+ \oplus Z \oplus Fv, Ff_1^-
\oplus Fv)$, the innermost integral is equal to
\begin{equation*}
  \int_{[J (Fv^+ \oplus Z \oplus Fv, Ff_1^-\oplus Fv)]}  \xi (g \gamma_{F (v+e_1^-)}  n_1  m_1 (t) k)   dg 
\end{equation*}
which is a period integral on $\sigma\otimes\overline {\Theta_{X,Y}}$ over $J (Fv^+ \oplus Z \oplus Fv, Ff_1^-
\oplus Fv)$. Thus we get
\begin{prop}\label{prop:J-Omega11-2-L-1}
  If $\sigma\otimes\overline {\Theta_{X,Y}}$ is not $J (Z',L'\oplus L)$-distinguished for any $Z'\supset Z$ such that
  $\dim Z' = \dim Z +2$ and $L'$  an isotropic line of $Z'$ in the orthogonal complement of $Z$, then
  $J_{\Omega_{1,1},2} (\xi)$ vanishes for
  $\xi=\xi_{c,s}$ and $\xi_s^c$.
\end{prop}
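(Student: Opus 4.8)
The plan is to extract the conclusion directly from the unfolding of $J_{\Omega_{1,1},2}(\xi)$ carried out immediately before the statement. After collapsing the sum over $\eta$ with the quotient integral and applying the Iwasawa decomposition of $G(W_1)(\AA)$, I would record that for each fixed $v\in (V-\{0\})/F^\times$, $n_1\in NJ_1(\AA)$, $t\in\GL_1(\AA)$ and $k\in K_{G(W_1)}$ the innermost integral $\int_{[J(Fv^+\oplus Z\oplus Fv,\;Ff_1^-\oplus Fv)]}\xi\bigl(g\,\gamma_{F(v+e_1^-)}n_1 m_1(t)k\bigr)\,dg$ is a period integral of $\sigma\otimes\overline{\Theta_{X,Y}}$ over the Jacobi subgroup $J(Z',L'\oplus L)$, where $Z':=Fv^+\oplus Z\oplus Fv$ and $L':=Fv$; here one uses that $\xi$, being the product of the two genuine functions $f_s$ and $\overline{\theta_{X_1,Y}(\cdot,1,\Phi)}$, descends to a function on $G(X_1)(\AA)$ that may be restricted to subgroups of $G(X)(\AA)$. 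Since $v$ lies in the orthogonal complement $V$ of $Z$, the space $Z'$ is a non-degenerate symplectic subspace of $X$ with $Z\subset Z'$ and $\dim Z'=\dim Z+2$, the line $L'=Fv$ is isotropic and contained in the orthogonal complement of $Z$, and $L=Ff_1^-\subset Z\subset Z'$, so $L'\oplus L$ is a two-dimensional isotropic subspace of $Z'$ and the pair $(Z',L'\oplus L)$ is exactly of the shape excluded in the hypothesis. Consequently this innermost period vanishes for every section $f_s$ and every Schwartz function $\Phi$.

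It then remains to feed this back. For $\Re s$ and $c$ large, Prop.~\ref{prop:abs-conv} guarantees that the whole iterated integral defining $J_{\Omega_{1,1},2}(\xi_{c,s})$ converges absolutely, so one may carry out the $K_{G(W_1)}$-, $NJ_1(\AA)$- and $\GL_1(\AA)$-integrations around the vanishing inner period; hence $J_{\Omega_{1,1},2}(\xi_{c,s})\equiv 0$ for such $s$, and, by the meromorphic continuation recorded in Prop.~\ref{prop:abs-conv}, identically in $s$. The argument for $\xi_s^c$ is the same: the only change is that $f_s$ is replaced by $M(w,s)f_s$ and $\hat\tau_c$ by $\hat\tau^c$, and $M(w,s)f_s$ still lies in an induced section whose $G(X)$-component is a vector in $\sigma$ (the longest Weyl element for $Q_1$ acts trivially on the $G(X)$-factor of the Levi and $\chi$ is quadratic), so the inner integral is again a period of $\sigma\otimes\overline{\Theta_{X,Y}}$ over $J(Z',L'\oplus L)$ and vanishes.

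The points needing care are organisational rather than conceptual. In the Iwasawa step one collapses parts of $NJ_2$ and $J(W_1,Fe_1^-)$ into $J(Z_1,Fe_1^-\oplus Ff_1^-)$ and tracks the resulting modulus characters exactly as in the symplectic computation of \cite{MR3805648}; since the metaplectic cover splits over all unipotent subgroups occurring and over $G(X_1)(F)$, no new cocycle enters, and the $\chi_\psi$-twists cancel between $f_s$ and the theta function just as in the evaluation of $J_{\Omega_{0,1},2}(\xi)$. One also needs the linear-algebra identity $\gamma_{F(v+e_1^-)}\,J(Z_1,Fe_1^-\oplus Ff_1^-)\,\gamma_{F(v+e_1^-)}^{-1}=J(Fv^+\oplus Z\oplus Fv,\;Ff_1^-\oplus Fv)$, which is the same verification used for $I_{\Omega_{1,1}}$ when $\dim L=0$, now run with the extra fixed vector $f_1^-$. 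I expect the only mild obstacle to be confirming that the absolute-convergence statement of Prop.~\ref{prop:abs-conv} really does cover this particular summand before invoking term-by-term vanishing; once that is in place the proposition follows at once.
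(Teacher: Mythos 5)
Your proposal matches the paper's proof essentially step for step: collapse the series with the quotient integral, apply the Iwasawa decomposition of $G(W_1)(\AA)$, merge the unipotent pieces into $[J(Z_1,Fe_1^-\oplus Ff_1^-)]$, and use the conjugation $\gamma_{F(v+e_1^-)}\,J(Z_1,Fe_1^-\oplus Ff_1^-)\,\gamma_{F(v+e_1^-)}^{-1}=J(Fv^+\oplus Z\oplus Fv,\;Ff_1^-\oplus Fv)$ to exhibit the innermost integral as a period of $\sigma\otimes\overline{\Theta_{X,Y}}$ over a pair $(Z',L'\oplus L)$ excluded by hypothesis, with Prop.~\ref{prop:abs-conv} justifying the term-by-term vanishing and continuation. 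This is the same argument as in the paper, so no further comment is needed.
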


For each $b$-term in $J_{\Omega_{1,1},3} (\xi)$, we take $\delta_{bf_1^-}$ to be the element that is given by $e_1^+
\leftrightarrow f_1^+$ and $e_1^-
\leftrightarrow f_1^-$ and identity on the orthogonal complement. We evaluate
\begin{equation*}
  \int_{[J (Z_1,L)]}  \xi (\gamma_{F (v+e_1^-)}\delta_{bf_1^-} g ) dg.
\end{equation*}
We have $\delta_{bf_1^-}J (Z_1,L) \delta_{bf_1^-}^{-1} = J (Z_1,Fe_1^-)$ and $\gamma_{F (v+e_1^-)} J
(Z_1,Fe_1^-) \gamma_{F (v+e_1^-)}^{-1} = J (Fv^+ \oplus Z \oplus Fv, Fv)$. Thus we get
\begin{equation*}
  \int_{[J (Fv^+ \oplus Z \oplus Fv, Fv)]}  \xi ( g\gamma_{F (v+e_1^-)}\delta_{bf_1^-} ) dg
\end{equation*}
which is a period integral on $\sigma\otimes\overline {\Theta_{X,Y}}$ over $J (Fv^+ \oplus Z \oplus Fv, Fv)$. We get
\begin{prop}\label{prop:J-Omega11-3-L-1}
  If $\sigma\otimes\overline {\Theta_{X,Y}}$ is not $J (Z',L')$-distinguished for any $Z'\supset Z$ such that
  $\dim Z' = \dim Z +2$ and $L'$ an isotropic line of $Z'$ in the orthogonal complement of $Z$, then
  $J_{\Omega_{1,1},3} (\xi)$ vanishes for
  $\xi=\xi_{c,s}$ and $\xi_s^c$.
\end{prop}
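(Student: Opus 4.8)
The plan is to reduce $J_{\Omega_{1,1},3}(\xi)$, one summand at a time, to a period integral of $\sigma\otimes\overline{\Theta_{X,Y}}$ over a symplectic subgroup of $G(X)$ carrying a one-dimensional isotropic datum — precisely the kind of subgroup excluded by the hypothesis — and then to sum up. By Prop.~\ref{prop:abs-conv} the integral $J_{\Omega_{1,1},3}(\xi)$ is absolutely convergent for $\Re s$ and $c$ large, so I may interchange the sums over $v\in(V-\{0\})/F^\times$ and $b\in F^\times$ with the integration over $[J(Z_1,L)]$; it therefore suffices to show that each $(v,b)$-summand
\[
  \int_{[J(Z_1,L)]}\xi\bigl(\gamma_{F(v+e_1^-)}\,\delta_{bf_1^-}\,g\bigr)\,dg
\]
vanishes, after which the vanishing of $J_{\Omega_{1,1},3}(\xi)$ on the region of convergence — and hence, by the meromorphic continuation furnished in Sec.~\ref{sec:computation}, everywhere — follows.

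For the reduction I use the explicit representatives already fixed above: $\gamma_{F(v+e_1^-)}\in G(X_1)(F)$ as in Sec.~\ref{sec:Omega11}, and $\delta_{bf_1^-}\in G(Z_1)(F)$ carrying $e_1^-$ to the isotropic vector $bf_1^-$ (and $e_1^+$ to the matching multiple of $f_1^+$) and acting as the identity on the orthogonal complement in $Z_1$. Since both $f_s$ and $\theta_{X_1,Y}(\cdot,1,\Phi)$ are left-invariant under $G(X_1)(F)$ up to the metaplectic cocycle, whose contributions cancel in the product $\xi$, the function $\xi$ descends to a left $G(X_1)(F)$-invariant function on $G(X_1)(\AA)$. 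Combining this with invariance of Haar measure and the two conjugation identities $\delta_{bf_1^-}J(Z_1,L)\delta_{bf_1^-}^{-1}=J(Z_1,Fe_1^-)$ and $\gamma_{F(v+e_1^-)}J(Z_1,Fe_1^-)\gamma_{F(v+e_1^-)}^{-1}=J(Fv^+\oplus Z\oplus Fv,Fv)$ rewrites the $(v,b)$-summand as
\[
  \int_{[J(Fv^+\oplus Z\oplus Fv,\,Fv)]}\xi\bigl(g\,\gamma_{F(v+e_1^-)}\,\delta_{bf_1^-}\bigr)\,dg.
\]
Restricting $f_s$ and $\theta_{X_1,Y}$ along $G(X)\hookrightarrow G(X_1)$ composed with the fixed right translate by $\gamma_{F(v+e_1^-)}\delta_{bf_1^-}$, and using property~(3) of the sections in $\cA_1(s,\chi,\sigma)$ together with the restriction formula for the theta series (the truncation factor $\hat\tau_{(\cdot)}$ being handled exactly as for $I_{\Omega_{1,0}}(\xi)$ in Sec.~\ref{sec:Omega10-L1} and for $J_{\Omega_{1,1},2}(\xi)$ above), I identify this with a period integral of $\sigma\otimes\overline{\Theta_{X,Y}}$ over $J(Z',L')$, where $Z':=Fv^+\oplus Z\oplus Fv$ is a non-degenerate symplectic subspace of $X$ containing $Z$ with $\dim Z'=\dim Z+2$, and $L':=Fv$ is an isotropic line of $Z'$ lying in the orthogonal complement of $Z$ inside $Z'$.

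Since $\sigma\otimes\overline{\Theta_{X,Y}}$ is, by hypothesis, not $J(Z',L')$-distinguished for any $Z'\supset Z$ with $\dim Z'=\dim Z+2$ and any such isotropic line $L'$, every $(v,b)$-summand vanishes, whence $J_{\Omega_{1,1},3}(\xi)=0$ for $\xi=\xi_{c,s}$ and $\xi=\xi_s^c$. I do not expect a serious obstacle here: this is the mildest of the three pieces of $I_{\Omega_{1,1}}(\xi)$, namely the orbit of isotropic vectors whose $Z_1$-component is proportional to a generator of $L$, so that the stabiliser in $J(Z_1,L)$ is all of $J(Z_1,L)$ and no further Iwasawa decomposition or unipotent integration is needed — the argument runs parallel to, and is simpler than, those for $J_{\Omega_{1,1},1}(\xi)$ and $J_{\Omega_{1,1},2}(\xi)$. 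The only points that genuinely require care are the verification of the two conjugation identities for the chosen representatives and the bookkeeping confirming that $(Z',L')$ lies in the family quantified over in the hypothesis.
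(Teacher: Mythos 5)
Your proposal is correct and follows essentially the same route as the paper: interchange the sums over $v$ and $b$ with the integral (justified by Prop.~\ref{prop:abs-conv}), use the left $G(X_1)(F)$-invariance of $\xi$ together with the conjugation identities $\delta_{bf_1^-}J(Z_1,L)\delta_{bf_1^-}^{-1}=J(Z_1,Fe_1^-)$ and $\gamma_{F(v+e_1^-)}J(Z_1,Fe_1^-)\gamma_{F(v+e_1^-)}^{-1}=J(Fv^+\oplus Z\oplus Fv,Fv)$, and identify each summand as a period of $\sigma\otimes\overline{\Theta_{X,Y}}$ over $J(Z',L')$ with $Z'=Fv^+\oplus Z\oplus Fv$, $L'=Fv$, which vanishes by hypothesis. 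This matches the paper's argument in Sec.~\ref{sec:Omega11-L1}, so there is nothing to add.
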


\bibliographystyle{alpha}

\end{document}